\tikzset{
  on each segment/.style={
    decorate,
    decoration={
      show path construction,
      moveto code={},
      lineto code={
        \path [#1]
        (\tikzinputsegmentfirst) -- (\tikzinputsegmentlast);
      },
      curveto code={
        \path [#1] (\tikzinputsegmentfirst)
        .. controls
        (\tikzinputsegmentsupporta) and (\tikzinputsegmentsupportb)
        ..
        (\tikzinputsegmentlast);
      },
      closepath code={
        \path [#1]
        (\tikzinputsegmentfirst) -- (\tikzinputsegmentlast);
      },
    },
  },
  mid arrow/.style={postaction={decorate,decoration={
        markings,
        mark=at position .5 with {\arrow[#1]{stealth}}
      }}},
}
\newcommand{\G}{\Gamma}
\def\nnu{{\boldsymbol\nu}}
\newcommand{\Mat}{\mbox{\rm Mat}}
\def\jj{{\boldsymbol j}}
\def\ww{{\boldsymbol w}}
\def\uu{{\boldsymbol u}}
\def\O{{\boldsymbol O}}
\def\0{{\boldsymbol 0}}
\def\aalpha{{\boldsymbol\alpha}}
\def\CC{{\mathbb C}}
\def\QQ{{\mathbb Q}}
\def\M{{\cal M}}
\def\E{{\cal E}}
\def\F{{\cal F}}
\def\RR{{\mathbb R}}
\def\FF{{\mathbb F}}
\def\NN{{\mathbb N}}
\def\ZZ{{\mathbb Z}}
\def\A{{\mathcal A}}
\def\S{{\mathcal S}}
\def\R{{\mathcal R}}
\def\P{{\mathcal P}}
\def\B{{\mathcal B}}
\def\M{{\mathcal M}}
\def\ol{\overline}
\def\ds{\displaystyle}
\def\Lra{\Leftrightarrow}
\def\Ra{\Rightarrow}
\def\la{\leftarrow}
\def\La{\Leftarrow}
\def\ra{\rightarrow}
\def\XXi{{\mathfrak X}}
\DeclareMathOperator{\Span}{span}
\DeclareMathOperator{\ii}{{\boldsymbol{i}}}
\DeclareMathOperator{\hh}{{\boldsymbol{h}}}
\DeclareMathOperator{\oo}{{\boldsymbol{0}}}
\DeclareMathOperator{\im}{im}
\DeclareMathOperator{\spec}{spec}
\newtheorem{theorem}{Theorem}[section]
\newtheorem{lemma}[theorem]{Lemma}
\newtheorem{corollary}[theorem]{Corollary}
\newtheorem{proposition}[theorem]{Proposition}
\newtheorem{remark}[theorem]{Remark}
\newtheorem{researchProblem}[theorem]{Research problem}
\newtheorem{problem}{Problem}[section]
\definecolor{ForestGreen}{RGB}{12, 110, 46}
\definecolor{ForestGreenTwo}{RGB}{120, 110, 86}
\newenvironment{proof}{{\noindent\it Proof. }}{\nopagebreak\hspace*{0.5cm}\hfill$\hbox{\rule{3pt}{6pt}}$\smallskip}
\definecolor{ForestGreen}{RGB}{12, 110, 46}
\definecolor{ForestGreenTwo}{RGB}{120, 110, 86}
\newfont\fiverm{cmr5}
\def\eeq{\end{equation}} 
\def\lbeq#1{\begin{equation} \label{#1}} 
\title{
On commutative association schemes and associated (directed) graphs
}
\author{
{Giusy Monzillo}\\
{\small Faculty of Mathematics, Natural Sciences}\\
{\small and Information Technologies}\\
{\small University of Primorska}\\
{\small Muzejski trg 2, 6000 Koper, Slovenia }\\
{\small Giusy.Monzillo@famnit.upr.si} \and
{Safet Penji\'c}\\
{\small Faculty of Mathematics, Natural Sciences}\\
{\small and Information Technologies, and}\\
{\small Andrej Maru\v{s}i\v{c} Institute}\\
{\small University of Primorska}\\
{\small Muzejski trg 2, 6000 Koper, Slovenia }\\
{\small Safet.Penjic@iam.upr.si}
}
\begin{document}
\setstcolor{red}
\maketitle

\begin{abstract}
Let $\M$ denote the Bose--Mesner algebra of a commutative $d$-class association scheme $\XXi$ (not necessarily symmetric), and $\G$ denote a (strongly) connected (directed) graph with adjacency matrix $A$. Under the assumption that $A$ belongs to $\M$, we describe the combinatorial structure of $\G$. Moreover, we provide an algebraic-combinatorial characterization of $\G$ when $A$ generates $\M$.

Among else, we show that, if $\XXi$ is a commutative $3$-class association scheme that is not an amorphic symmetric scheme, then we can always find a (directed) graph $\G$ such that the adjacency matrix $A$ of $\G$ generates the Bose--Mesner algebra $\M$ of $\XXi$.
\end{abstract}


\smallskip
{\small
\noindent
{\it{MSC:}} 05E30, 05C75, 05C50.

\smallskip
\noindent 
{\it{Keywords:}} Commutative association schemes, Bose--Mesner algebra, equitable partition, quotient-polynomial graphs, $x$-distance-faithful intersection diagram.
}


\section{Introduction}
\label{1a}

In this paper, we study connections between commutative association schemes and (directed) graphs, by considering the following question: when can a commutative association scheme be generated by a (directed) graph? Formal definitions and an introduction to the theory of commutative association schemes are given in Section~\ref{ca}.

Let $\M$ denote the Bose--Mesner algebra of a commutative $d$-class association scheme $\XXi=(X,\R)$ (note that $\M$ does not need to be symmetric). To give a motivation and an introduction to our problem, in the next few lines, we first show that $\M$ is a monogenic algebra, that is, we show that there always exists a matrix $A\in\Mat_X(\CC)$ which generates $\M$, i.e., $\M=(\langle A\rangle,+,\cdot)$ (we say that a matrix $A$ {\em generates} $\M$ if every element in $\M$ can be written as a polynomial in $A$). Since $\M$ is a space of commutative normal matrices, from a well-known result on commutative sets of normal matrices, there exists a unitary matrix $U\in\Mat_X(\CC)$ which diagonalizes $\M$: to each $B\in\M$ 
there corresponds a diagonal matrix $\Lambda\in\Mat_X(\CC)$ such that $B=U \Lambda \ol{U}^\top$, and the diagonal entries of $\Lambda$ are the eigenvalues of $B$. When $B$ runs through $\M$, the matrix $\Lambda$ runs through a subalgebra $\F$ of $\Mat_X(\CC)$ which is isomorphic to $\M$. An explicit isomorphism $\psi:\M\ra\F$ is given by $\psi(B)=\ol{U}^\top B U$. Since $\M$ is an algebra of dimension $d+1$, the algebra $\F$ is also of dimension $d+1$. Moreover, there exists a set of diagonal $01$-matrices $\{F_i\}_{0\le i\le d}$ which is a basis of $\F$. Then, for arbitrary non-zero pairwise distinct complex scalars $\alpha_i$ $(0\le i \le d)$, the matrix $F=\sum_{i=0}^d\alpha_i F_i$ generates $\F$, i.e., $\F=(\langle F\rangle, +,\cdot)$. Thus, the matrix $A=\psi^{-1}(F)$ generates $\M$ and has $d+1$ distinct eigenvalues (for details we refer to Sections~\ref{ca} and \ref{da}). A reader more familiar with the field of association schemes will notice that our claim on the existence of a generator $A$ of $\M$ also follows from the proof of \cite[Theorem~2.2]{DP}. For a different approach in proving that the Bose--Mesner algebra $\M$ of an arbitrary commutative $d$-class association scheme $\XXi$ (which is not necessarily symmetric) can be generated by $A$, see Lemma~\ref{dk} in Section~\ref{da}. In this paper, we are interested in the following problem.

\begin{problem}
\label{aa}{\rm
When can the Bose--Mesner algebra $\M$ of commutative $d$-class association scheme $\XXi$ (which is not necessarily symmetric) be generated by a $01$-matrix $A$? In other words, for a given $\XXi$, under which combinatorial and algebraic restrictions can we find a $01$-matrix $A$ such that $\M=(\langle A\rangle, +, \cdot)$? Moreover, since such a matrix $A$ is the adjacency matrix of some (directed) graph $\G$, can we describe the combinatorial structure of $\G$? The vice-versa question is also of importance, i.e., what combinatorial structure does a (directed) graph need to have so that its adjacency matrix will generate the Bose--Mesner algebra of a commutative $d$-class association scheme $\XXi$?
}\end{problem}

In the case when $\XXi$ is a symmetric association scheme, our problem is connected with quotient-polynomial graphs (undirected graphs which generate symmetric association schemes, see \cite{FQpG,FMPS}). Recall that in \cite{FQpG} {\sc Fiol} defined a quotient-polynomial graph a little bit differently, that is, the author defined the quotient-polynomial graph as a graph $\G$ with vertex set $X$ for which the adjacency matrices of a walk-regular partition of $X\times X$ belong to the adjacency algebra of $\G$. Then, the author described some algebraic properties of such graphs and proved that $\G$ is the connected generating graph of an association scheme $\XXi$ if and only if $\G$ is a quotient-polynomial graph. Following this paper, in \cite{FMPS} {\sc Fiol} and {\sc Penjić} revisited this topic from another point of view, finding some additional algebraic properties as well as describing the combinatorial structure of quotient-polynomial graphs. In both cited papers \cite{FQpG,FMPS}, the authors studied the case of undirected graphs and with it the case of a symmetric (adjacency) algebra. In this paper, we study commutative association schemes (not necessarily symmetric) and, as a by-product, we also get some interesting results for symmetric association schemes. More precisely we answer the following question: {\em Is it possible that every symmetric association scheme is generated by some (quotient-polynomial) graph?} (The answer for a $3$-class association scheme is given in Theorem~\ref{ph}.) 

In the case when $\XXi$ is a \underline{symmetric} $3$-class association scheme, by the result of {\sc Van Dam} in \cite[Theorem~5.1]{vDe3} together with our Lemma~\ref{dk}, 
we get partial answers to questions posted in Problem~\ref{aa}. 
For the moment, let $\G$ denote a connected regular graph with 4 distinct eigenvalues and adjacency matrix $A$. In \cite[Theorem~5.1]{vDe3} {\sc Van Dam} proved that $A$ is one of the adjacency matrices of a $3$-class association scheme if and only if two adjacent vertices have a constant number of common neighbors, and the number of common neighbors of any two nonadjacent vertices takes precisely two values. In the same paper \cite{vDe3}, the author gave some answers about when and how to use the combinatorial structure of strongly-regular graphs to obtain a $3$-class association scheme (see, for example, \cite[Proposition~5.2]{vDe3}). In this paper, we fully describe when a $3$-class association scheme (not necessarily symmetric) can be generated by a graph.

We are interested in the natural problem of describing the combinatorial structure and algebraic properties of (directed) graphs which will generate commutative association schemes. This provides us with a different approach in finding new association schemes, using the structure of (directed) graphs. For example, if we pick some well-known family of undirected graphs, and give them an orientation on the edges that satisfy some of (if not all) properties described in this paper, will we get a candidate which generates a commutative association scheme? This paper gives some answers to this question too. 

Association schemes arise in group theory (see, for example, \cite{EsPi,HA,YM}), design theory (see, for example, \cite{BBBh,GaSsV,QjLj}), graph theory (see, for example, \cite{HKK,KM,TLy}), coding theory (see, for example, \cite{CP,DL,HFG}) and more (see, for example, \cite{CD,HAt,ItMa}). Some of the most well-studied association schemes are distance-regular graphs (see, for example, \cite{CGGV,DG,VJ}), including Moore graphs (see, for example, \cite{BH,NH,ZP}), distance-transitive graphs (see, for example, \cite{MT,RrJ,EY}), strongly-regular graphs (see, for example, \cite{SS,KPS,KSS}), generalized polygons \cite{ZP,BH}, etc. The problem of the construction association scheme is not a new one, and our paper is also going in this direction. Association schemes can be constructed from various kinds of objects, and we count just a few of them: from known association schemes (see, for example, \cite{HY}), from linear codes (see, for example, \cite{HCXL, Slc}), from Boolean bent functions (see, for example,  \cite{PTFK}), from cyclotomy over products of finite fields (see, for example, \cite{FGRM, MAG}), from $q$-polynomials (see, for example, \cite{GXc}), and so on. Moreover, association schemes are an important tool in the study of linear codes (\cite{WT, CGt}), difference sets (\cite{KMO}), objects from finite geometry (\cite{JGA,GA}), combinatorial designs (\cite{xB2}) and etc. For examples on how to construct self-orthogonal codes from association schemes, see, for example, \cite{DCSR,SRSA}.

We say that a (directed) graph $\G$ \emph{generates} a commutative association scheme $\XXi$ if and only if the adjacency matrix $A$ of $\G$ generates the Bose--Mesner algebra $\M$ of $\XXi$, and in symbols we write $\M=(\langle A\rangle,+,\cdot)$. Our main results are Theorems~\ref{ph}, \ref{pi} and \ref{PI}.

In Theorem~\ref{ph}, we characterize $3$-class amorphic symmetric schemes as the only commutative $3$-class association schemes which fails to satisfy the ``single-01-matrix generator'' property of Problem~\ref{aa}. In other words, except for amorphic symmetric association schemes, every $3$-class association scheme can be generated by a $01$-matrix $A$. This matrix is the adjacency matrix of a (strongly) connected (directed) graph $\G=\G(A)$, and has $4$ distinct eigenvalues.

\begin{theorem}
\label{ph}
Let $\XXi$ denote a $3$-class commutative association scheme. If $\XXi$ is not an amorphic symmetric scheme, then there exists a (strongly) connected (directed) graph $\G=\G(A)$ such that the following hold.
\begin{enumerate}[label=\rm(\roman*)]
\item The adjacency matrix $A$ of $\G$ has exactly $4$ distinct eigenvalues.
\item The adjacency matrix $A$ generates the Bose--Mesner algebra $\M$ of $\XXi$.
\end{enumerate} 
Moreover, the scheme $\XXi$ is generated by a graph if and only if it is not an amorphic symmetric scheme.
\end{theorem}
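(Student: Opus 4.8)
The plan is to translate everything into the spectrum of the relation matrices $A_0=I,A_1,A_2,A_3$ of $\XXi$. A $01$-matrix in $\M$ is exactly a subset sum $A_S=\sum_{i\in S}A_i$ with $S\subseteq\{0,1,2,3\}$, since the $A_i$ are $01$-matrices with disjoint supports summing to $J$; dropping $A_0=I$, which shifts every eigenvalue by $1$ and so cannot affect their distinctness, we take $S\subseteq\{1,2,3\}$, so $A_S$ is loopless. As $\M$ is commutative and semisimple of dimension $4$, a matrix of $\M$ generates $\M$ iff its minimal polynomial has degree $4$, i.e. iff it has $4$ distinct eigenvalues (this is the setting of Lemma~\ref{dk}); the eigenvalue of $A_S$ on the $j$-th common eigenspace is $\sum_{i\in S}p_i(j)$. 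Thus (i) and (ii) together amount to finding a nonempty $S\subseteq\{1,2,3\}$ for which the four numbers $\sum_{i\in S}p_i(j)$ $(j=0,1,2,3)$ are pairwise distinct. Strong connectivity of $\G=\G(A_S)$ is then automatic: once $A_S$ generates $\M\ni J$, the matrix $J$ is a polynomial in $A_S$, so for every ordered pair $(u,v)$ some power $A_S^k$ is nonzero at $(u,v)$, i.e. there is a directed $u\to v$ walk.

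For the ``only if'' half, suppose $\XXi$ is amorphic and symmetric. Then every $A_S$ is a strongly regular graph, whose spectrum consists of the valency together with the two restricted eigenvalues; hence $A_S$ has at most three distinct eigenvalues and can never generate the $4$-dimensional algebra $\M$.

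For the converse I would prove the contrapositive: if no subset sum has four distinct eigenvalues, then $\XXi$ is amorphic and symmetric. The key is a spectral lemma for disconnected relations. If $A_i$ is disconnected, its Perron valency $k_i$ occurs on a non-principal eigenspace as well as on $E_0$; replacing $A_i$ by its complement $J-I-A_i$ (again a subset sum) turns the $E_0$-value into $n-1-k_i$ while the non-principal copies become $-1-k_i$, and since a disconnected $k_i$-regular graph has $n>2k_i$ no eigenvalue $\lambda$ of $A_i$ can satisfy $\lambda=k_i-n$; hence the complement has exactly one more distinct eigenvalue than $A_i$ carries on the non-principal eigenspaces. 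In particular a disconnected relation with three distinct eigenvalues would have a complement with four, which is excluded. Therefore no relation has four eigenvalues (it would itself be a generator), no relation is disconnected with three eigenvalues, and the only disconnected relations are the unions of cliques $A_i=mK_s$, which are strongly regular. A connected relation has two eigenvalues only if it is complete (impossible in a genuine $3$-class scheme) and is strongly regular as soon as it has exactly three. Thus every relation is strongly regular; since for a $3$-class scheme every $2$-class fusion is the pair $\{A_k,\overline{A_k}\}$ and the complement of a strongly regular graph is strongly regular, all fusions are subschemes and $\XXi$ is amorphic.

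The step I expect to be the main obstacle is showing that a non-symmetric scheme always admits a generator (equivalently, that ``no generator'' forces symmetry). Here one relation satisfies $A_2^\top=A_3\neq A_2$, so $A_2$ is normal but not Hermitian and its spectrum contains a genuine complex-conjugate pair $\theta,\overline\theta$; together with the real valency $k_2$ this already yields three distinct eigenvalues, the fourth being a real value $\phi$. When $\G(A_2)$ is strongly connected, $k_2$ is simple, $\phi\neq k_2$, and $A_2$ itself has four distinct eigenvalues. The delicate case is when $A_2$ is not strongly connected, so that $\phi=k_2$; there $A_2$ has only three distinct eigenvalues and one must pass to another subset sum (combining $A_2$ with the symmetric relation $A_1$, or complementing) and rule out the remaining coincidences, again via the valency-multiplicity and size estimates. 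For the primitive symmetric schemes the existence of a relation with four eigenvalues is exactly what Van Dam's characterization \cite[Theorem~5.1]{vDe3} supplies. Collecting these cases, every $3$-class commutative scheme that is not amorphic symmetric has a subset sum with four distinct eigenvalues, which yields the required (strongly) connected (directed) graph $\G(A)$ with properties (i) and (ii) and completes the ``if and only if''.
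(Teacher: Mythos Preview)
Your overall strategy differs from the paper's and is more conceptual. The paper splits into symmetric and non-symmetric cases and, within each, quotes an explicit structure result (Van Dam's trichotomy for symmetric $3$-class schemes in Lemma~\ref{classDam}; Goldbach's form of the eigenmatrix $P$ for non-symmetric ones in Lemma~\ref{gold}), then checks the column-sums of $P$ case by case. Your key lemma --- that a disconnected symmetric relation with exactly three distinct eigenvalues has a complement with four --- bypasses the explicit eigenmatrix computation in Proposition~\ref{Gp} entirely: in the absence of any generator it forces every relation to be strongly regular, hence the scheme amorphic (and automatically symmetric). That handling of the symmetric case is correct and cleaner than the paper's. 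One line you should add: the bound $n>2k_i$ for a disconnected $k_i$-regular simple graph holds because each component has at least $k_i+1$ vertices, and you use it to get $k_i-n<-k_i\le\lambda_j$.

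The gap is exactly where you place it, in the non-symmetric case, but it closes more easily than you suggest. With $A_2^\top=A_3\ne A_2$, the spectrum of $A_2$ on the four common eigenspaces is $(k_2,\theta,\overline\theta,\phi)$ with $\theta\notin\RR$ and $\phi\in\RR$, so the only possible coincidence is $\phi=k_2$. In that case pass to the complement $J-I-A_2=A_1+A_3$: its four values are $n-1-k_2$, $-1-\theta$, $-1-\overline\theta$, $-1-k_2$, two non-real conjugates and two reals differing by $n>0$, hence pairwise distinct. No ``valency-multiplicity and size estimates'' are needed --- the complex conjugate pair does the separating for you, so this case is in fact easier than the symmetric one. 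Once this is filled in, your final appeal to \cite[Theorem~5.1]{vDe3} is both unnecessary and misattributed (that theorem characterizes which $4$-eigenvalue graphs occur as relations, not that such a relation exists); your own complement lemma already covers all symmetric schemes, primitive or not.
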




In Theorem~\ref{pi}, we describe the combinatorial structure of a graph which `lives' in a commutative association scheme. We can say that Corollary~\ref{we} of Theorem~\ref{pi} is in some sense a more general version of the result of {\sc Van Dam} given in \cite[Theorem~5.1]{vDe3} as it includes also \underline{non-symmetric} commutative $3$-class association schemes.

\begin{theorem}
\label{pi}
Let $\M$ denote the Bose--Mesner algebra of a commutative $d$-class association scheme $\XXi=(X,\R)$, and $A\in\M$ denote a $01$-matrix. Assume that $\G=\G(A)$ denotes a (strongly) connected (directed) graph. Then the following hold.
\begin{enumerate}[label=\rm(\roman*)]
\item For every vertex $x\in X$, there exists an $x$-distance-faithful intersection diagram (of an equitable partition $\Pi_x$) with $d+1$ cells.
\item The structure of the $x$-distance-faithful intersection diagram (of the equitable partition $\Pi_x$) from {\rm (i)} does not depend on $x$.
\end{enumerate}
\end{theorem}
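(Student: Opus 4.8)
The plan is to take for $\Pi_x$ the partition of $X$ into the \emph{relation classes} $R_i(x)=\{y\in X:(x,y)\in R_i\}$, $0\le i\le d$; since each $R_i(x)$ has size equal to the valency $k_i\ge 1$, this yields exactly $d+1$ nonempty cells. Two features then have to be established: that $\Pi_x$ is equitable with respect to $\G$, and that its cells can be laid out faithfully according to their distance from $x$, with both features being independent of $x$. The starting observation is that, because $A$ is a $01$-matrix in $\M$ and the basis matrices $A_0,\dots,A_d$ are $01$-matrices with pairwise disjoint supports summing to $J$, any $01$-matrix of $\M$ is forced to be a subset sum; hence $A=\sum_{i\in S}A_i$ for some $S\subseteq\{1,\dots,d\}$.

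Equitability of $\Pi_x$ I would reduce to a computation with intersection numbers. The number of arcs from a vertex $y\in R_i(x)$ into the cell $R_j(x)$ is $\sum_{z:\,(x,z)\in R_j}A_{yz}=\sum_{s\in S}|\{z:(x,z)\in R_j,\ (z,y)\in R_{s^*}\}|=\sum_{s\in S}p^{\,i}_{j\,s^*}$, where $R_{s^*}=R_s^\top$. This depends only on $i$ and $j$ (through the fixed set $S$), and not on the particular $y\in R_i(x)$, so $\Pi_x$ is equitable and its quotient numbers are expressed purely through the intersection numbers of $\XXi$.

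The distance-faithful part rests on the fact that graph distance is constant on each relation class. Writing $A^t=\sum_{i=0}^d c_i^{(t)}A_i$, the coefficients $c_i^{(t)}$ are nonnegative integers, and since the $A_i$ have disjoint $01$-supports we get $(A^t)_{xy}=c_i^{(t)}$ for every $(x,y)\in R_i$; as $(A^t)_{xy}$ counts walks of length $t$, the distance $\dist(x,y)=\min\{t:c_i^{(t)}>0\}$ depends only on the index $i$. I would therefore define $\dist(i)$ to be this common value, which is finite for every $i$ because $\G$ is strongly connected, and use it to place each cell $R_i(x)$ in the distance column $\dist(i)$. Together with the quotient numbers from the previous paragraph, this produces the $x$-distance-faithful intersection diagram asserted in (i).

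For (ii) the key point is that neither ingredient refers to $x$: the quotient numbers $\sum_{s\in S}p^{\,i}_{j\,s^*}$ and the cell-distances $\dist(i)$ are functions of the intersection numbers of $\XXi$ and of the fixed set $S$ alone. Consequently the labelled, distance-layered diagram attached to $x$ is the same combinatorial object for every choice of $x$. I expect the main obstacle to be definitional rather than computational: one must check that the constructed diagram genuinely meets the definition of ``$x$-distance-faithful'' — that the distance layering is compatible with the equitable structure so that all arcs respect the recorded distances — and must argue carefully that the $x$-independence of the scalars $c_i^{(t)}$, although immediate from $A^t\in\M$, transfers to the full diagram and not merely to the individual cell-distances.
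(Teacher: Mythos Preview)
Your proposal is correct and follows essentially the same strategy as the paper: both take $\Pi_x$ to be the partition into relation classes $\P_i(x)=\{z:(A_i)_{xz}=1\}$, establish that distance from $x$ is constant on each cell by expanding $A^\ell$ in the $A_i$'s (this is the paper's Lemma~\ref{pg}), and then verify equitability with $x$-independent parameters. The only stylistic difference is that you exploit the decomposition $A=\sum_{s\in S}A_s$ to write the quotient numbers directly as sums of intersection numbers $\sum_{s\in S}p^{\,i}_{j\,s^*}$, whereas the paper instead computes the $(z,x)$-entry of $AA_k$ (with $A_k=A_j^\top$) in two ways and reads off the coefficient $\alpha^\ell_k$; these are the same quantity in different notation.

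One small point to patch: the paper's definition of an equitable partition of a \emph{directed} graph requires constancy of both the out-parameters $d^{\ra}_{ij}$ and the in-parameters $d^{\la}_{ij}$, and the paper checks both (using $AA_k$ and $A_jA$ respectively). You only verify the out-direction. The in-direction follows by the identical calculation, $|\G_1^{\la}(y)\cap R_j(x)|=\sum_{s\in S}p^{\,i}_{j\,s}$ for $y\in R_i(x)$, so you should add a sentence to that effect.
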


Theorem~\ref{pi} is in some sense an extension of the result of {\sc Fiol} and {\sc Penjić} given in \cite[Theorem~4.1]{FMPS} as it extends to the family of directed graphs. With reference to Theorem~\ref{pi}, in Corollary~\ref{wg}, we consider the case when $\G$ generates $\XXi$.

Note that the result of Theorem~\ref{pi} is very general result that at first glance it seems like it is already known from literature. We did not manage to find something similar explicitly (or implicitly) written in literature. In Theorem~\ref{PI}, we give one of its applications by characterizing algebraic-combinatorial properties of $\G$ when $\G$ generates a commutative association scheme.

\begin{theorem}
\label{PI}
Let $\G=\G(A)$ denote a (strongly) connected (directed) graph with vertex set $X$, adjacency matrix $A$, $d+1$ distinct eigenvalues, and adjacency algebra $\A=\A(\G)$. Let 
$$
\Delta=\{(i,j) \mid i=\partial(x,y),\, j=\partial(y,x),\, x,y\in X \},
$$ 
and for any $\ii\in\Delta$ define $R_{\ii}=\{(x,y)\in X\times X\mid (\partial(x,y),\partial(y,x))=\ii \}$. Then, the following are equivalent.
\begin{enumerate}[label=\rm(\roman*)]
\item $\A$ is the Bose--Mesner algebra of a commutative $d$-class association scheme. 
\item $(X,\{R_{\ii}\}_{\ii\in\Delta})$ is a $d$-class association scheme.
\item $A$ is a normal matrix, $|\Delta|=d+1$, and the number of walks from $x$ to $y$ of every given length $\ell\ge 0$ only depends on the distances $\partial(x,y)$ and $\partial(y,x)$ (and do not depend on choice of the pair $(x,y)$).
\end{enumerate}
\end{theorem}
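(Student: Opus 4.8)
The plan is to fix notation, record two elementary observations about dimensions, and then run the cycle (i) $\Rightarrow$ (iii) $\Rightarrow$ (ii) $\Rightarrow$ (i). For $\ii\in\Delta$ let $\wh A_{\ii}$ denote the $01$-matrix of the relation $R_{\ii}$, and set $\S=\Span\{\wh A_{\ii}\mid \ii\in\Delta\}$. The matrices $\wh A_{\ii}$ have pairwise disjoint supports and sum to $J$, so they are linearly independent and $\dim\S=|\Delta|$; moreover $\wh A_{(0,0)}=I$ (since $\G$ is strongly connected, $\partial(x,y)=0\Leftrightarrow x=y$), $\wh A_{(i,j)}^\top=\wh A_{(j,i)}$ with $(j,i)\in\Delta$ whenever $(i,j)\in\Delta$, and $A=\sum_{(1,j)\in\Delta}\wh A_{(1,j)}\in\S$. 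Two facts will be used repeatedly. First, because $A$ has $d+1$ distinct eigenvalues, the degree of its minimal polynomial, and hence $\dim\A$, is at least $d+1$, with equality exactly when $A$ is diagonalizable; in particular, if $A$ is normal then $\dim\A=d+1$. Second, since $(A^\ell)_{xy}$ is the number of walks of length $\ell$ from $x$ to $y$, the walk condition in (iii) says precisely that each $A^\ell$ is constant on every $R_{\ii}$, i.e. $A^\ell\in\S$ for all $\ell\ge 0$, i.e. $\A\subseteq\S$.

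For (iii) $\Rightarrow$ (ii): normality of $A$ gives $\dim\A=d+1$, the walk condition gives $\A\subseteq\S$, and $|\Delta|=d+1$ gives $\dim\S=d+1$; hence $\A=\S$. Since $\A$ is an algebra, $\S$ is closed under multiplication, and the entries of a product $\wh A_{\ii}\wh A_{\jj}$ (which, for $(x,z)\in R_{\boldsymbol k}$, count the vertices $y$ with $(x,y)\in R_{\ii}$, $(y,z)\in R_{\jj}$) are nonnegative integers constant on each $R_{\boldsymbol k}$; these are the intersection numbers $p_{\ii\jj}^{\boldsymbol k}$. Together with $\wh A_{(0,0)}=I$, $\sum_{\ii}\wh A_{\ii}=J$, the transpose-closure $\wh A_{(i,j)}^\top=\wh A_{(j,i)}$, and the commutativity of $\A=\langle A\rangle$ (polynomials in one matrix commute), this shows $(X,\{R_{\ii}\})$ is a commutative $d$-class association scheme. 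For (ii) $\Rightarrow$ (i): if $(X,\{R_{\ii}\})$ is a $d$-class scheme, then $\S$ is its Bose--Mesner algebra, so $\S$ is an algebra with $\dim\S=|\Delta|=d+1$; since $A\in\S$ we get $\A=\langle A\rangle\subseteq\S$, and combining with $\dim\A\ge d+1$ yields $d+1\le\dim\A\le\dim\S=d+1$, whence $\A=\S$. As $\A=\langle A\rangle$ is commutative, so is $\S$, the scheme is commutative, and $\A=\S$ is its Bose--Mesner algebra, which is (i).

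It remains to prove (i) $\Rightarrow$ (iii). Write the scheme of (i) with $01$-matrices $B_0=I,\dots,B_d$ and relations $S_0,\dots,S_d$, so $\A=\Span\{B_i\}$ consists of (simultaneously unitarily diagonalizable) normal matrices; in particular $A\in\A$ is normal, the first assertion of (iii). Since every $A^\ell\in\A$, each $A^\ell$ is constant on every $S_i$, so the walk counts, and hence the distances $\partial(x,y)$ and $\partial(y,x)$ (the least $\ell$ with a nonzero forward, respectively backward, walk count), are constant on each $S_i$; thus $\{S_i\}$ refines the distance-pair partition $\{R_{\ii}\}$, giving $|\Delta|\le d+1$ and $\S\subseteq\A$. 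To finish we must upgrade this to $|\Delta|=d+1$, equivalently $\{R_{\ii}\}=\{S_i\}$, and this is where Theorem~\ref{pi} is used. Applying Theorem~\ref{pi} with $\M=\A$ and the $01$-matrix $A$, for each $x$ the equitable partition $\Pi_x$ admits an $x$-distance-faithful intersection diagram with $d+1$ cells, independent of $x$; distance-faithfulness means the $d+1$ cells are pairwise separated by the distance pairs, so already for a single $x$ at least $d+1$ distinct values $(\partial(x,y),\partial(y,x))$ occur, giving $|\Delta|\ge d+1$. Hence $|\Delta|=d+1$ and the two $(d+1)$-class partitions coincide; the walk counts, being constant on each $S_i=R_{\ii}$, depend only on the distance pair, which is (iii).

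The only step that is not bookkeeping is the passage from $|\Delta|\le d+1$ to $|\Delta|=d+1$ in (i) $\Rightarrow$ (iii): a priori two distinct relations of the scheme could share the same pair of distances, and excluding this is exactly the distance-faithfulness furnished by Theorem~\ref{pi}. Everything else reduces to the dimension sandwich $d+1\le\dim\A\le\dim\S$ together with the fact that, having a single generator, $\A$ is automatically commutative.
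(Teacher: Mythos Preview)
Your treatment of (iii)$\Rightarrow$(ii) and (ii)$\Rightarrow$(i) is correct and matches the paper's reasoning (the paper runs the cycle as (i)$\Leftrightarrow$(ii), (iii)$\Rightarrow$(i), (ii)$\Rightarrow$(iii), but via exactly the same change-of-basis / dimension sandwich you use). The genuine problem is in (i)$\Rightarrow$(iii), at the step where you claim $|\Delta|\ge d+1$. You invoke Theorem~\ref{pi} and write that ``distance-faithfulness means the $d+1$ cells are pairwise separated by the distance pairs''. That is not what \emph{$x$-distance-faithful} means in this paper: by definition it only says the partition $\Pi_x$ refines the \emph{forward} distance partition $\{\G_h(x)\}_{0\le h\le\varepsilon(x)}$, i.e.\ each cell $\P_i(x)$ lies inside a single $\G_h(x)$. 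Nothing in Theorem~\ref{pi} prevents two different cells from sharing the same forward distance \emph{and} the same reverse distance, so you cannot conclude that the $d+1$ cells produce $d+1$ distinct pairs $(\partial(x,\cdot),\partial(\cdot,x))$.

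This gap is not merely unfilled but unfillable with the stated hypotheses. Take the undirected graph $\G=\mbox{Cay}(\ZZ_7;\{1,2\})$ of Figure~\ref{2e}: $A$ is symmetric with four distinct eigenvalues and $\A$ equals the Bose--Mesner algebra of the symmetric $3$-class cyclic scheme on $\ZZ_7$, so (i) holds with $d=3$. But the graph has diameter $2$, hence $\Delta=\{(0,0),(1,1),(2,2)\}$ and $|\Delta|=3<4=d+1$; concretely, the cells $\{1,6\}$ and $\{2,5\}$ of $\Pi_0$ both have distance-pair $(1,1)$ yet belong to different classes of the scheme. Thus (iii), and with it (ii), fail for this example. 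The paper's own proof of (i)$\Rightarrow$(ii) contains the same lacuna: it asserts a ``1-to-1 correspondence between $\Delta$ and $\{0,1,\ldots,d\}$'' without ever verifying that distinct scheme relations $B_i$ yield distinct distance-pairs, and the example above shows this injectivity can fail. The equivalence as literally stated therefore requires an additional hypothesis (for instance $|\Delta|=d+1$, or that $A$ is itself one of the relation matrices).
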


In Theorem~\ref{PI}, we give connections among weakly distance-regular graphs in sense of {\sc Comellas et al.} \cite{CFGM}, weakly distance-regular graphs in sense of {\sc Wang} and {\sc Suzuki} \cite{WS}, and commutative association schemes generated by a $01$-matrix $A$. We explain these two new notations (and their importance) in the next two paragraphs.

A directed graph $\G$ (of diameter $D$) is {\em weakly distance-regular in sense of} {\sc Comellas et al.} \cite{CFGM} if the number of walks of length $\ell$ $(0\le\ell\le D)$ in $\G$ between two vertices $x,y\in X$ only depends on $h=\partial(x,y)$ (on the distance from $x$ to $y$). In \cite[Theorem~2.2]{CFGM}, {\sc Comellas et al.} provided an algebraic-combinatorial characterization of such graphs. Moreover, in the same paper, the authors proved an equivalence between
(i) $A$ is a normal matrix and the set of distance-$i$ matrices $\{A_0,A_1,\ldots,A_D\}$ is a basis of the adjacency algebra $\A$ of $\G$;
and
(ii) there exist numbers $b_{ij}$ $(0\le i,j\le D)$ such that $|\G^\ra_1(y)\cap\G_j^\ra(x)|=b_{ij}$, for all $x\in X$, $y\in\G^\ra_i(x)$ $(0\le i,j\le D)$ (see \cite[Proposition~2.6]{CFGM}). Note the difference between above Theorem~\ref{PI}(iii) and the notion weakly distance-regular in sense of {\sc Comellas et al.} \cite{CFGM}. Among else, {\sc Comellas et al.} \cite{CFGM} studied the spectra of a  weakly distance-regular digraph and constructed several examples of such a graph. Our property (iii) of Theorem~\ref{PI} is restricted property of an open (up to our knowledge) research problem from \cite[Subsection 4.3]{CFGM}.
Some papers that are related with weakly distance-regular digraphs (in terms of number of walks of certain type) are \cite{CfFgM,GzJdZ,LL,DO}. We recommend papers \cite{FaMm,Ogr} for the study of spectrum of a (weakly distance-regular) directed graph.

For the moment, let $\G$ denote a directed graph with vertex set $X$, and consider the set $\Delta$ and relations $R_{\ii}$ $(\ii\in\Delta)$ from Theorem~\ref{PI}. A directed graph $\G$ is said to be {\em weakly distance-regular in sense of {\sc Wang} and {\sc Suzuki}} \cite{WS} if and only if $(X,\{R_{\ii}\}_{\ii\in\Delta})$ is a $|\Delta|$-class association scheme (for further insights regarding this definition, see paper of {\sc Suzuki} \cite{ShT}). In such a case, $\XXi(\G)$ is called the {\em attached scheme} of $\G$. In \cite{LGG,ShT,Wk,Yi,Yii}, some special families of weakly distance-regular digraphs in sense of {\sc Wang} and {\sc Suzuki} of small valency have been classified. Algebraic restrictions on weakly distance-regular digraphs called {\em thin}, {\em quasi-thin}, and {\em thick} were studied in \cite{ShT,Yqt,YWt}. Other papers that are directly or indirectly involved in the study of weakly distance-regular digraphs in sense of {\sc Wang} and {\sc Suzuki} are, for example, \cite{FW,FZL,GK,LS,Wki,WF,ZYW}. 

\bigskip
Our paper is organized as follows. In Section~\ref{ca}, we recall basic concepts from algebraic graph theory, including commutative association schemes and strongly-regular graphs (experts from the field can skip this section as well as Section~\ref{da}). Section~\ref{da} is a survey of all well-known properties that we use later in the paper: in this section we explain when a $01$-matrix generates the Bose--Mesner algebra of a scheme, and we explicitly (re)prove some results about the adjacency algebra of a (directed) graph, hidden in literature. Our paper then starts from Section~\ref{ra}. In Section~\ref{ra}, we prove Theorem~\ref{ph}. In Section~\ref{pa}, we prove Theorem~\ref{pi}, and we include several interesting corollaries of the claim.
In Section~\ref{6G}, we prove Theorem~\ref{PI}, among else by using the combinatorial structure of a (directed) graph, obtained in Theorem~\ref{pi}. We finish the paper with Section~\ref{ta}, where we describe possible further directions. {\color{blue} This manuscript has 39 pages, but actually the whole paper has 14 pages (see Section~\ref{ra}, \ref{pa} and \ref{6G}).}



\section{Preliminaries}
\label{ca}


{\color{blue}
{\bf Comment that we will delete from the final version of the paper:} 
Preliminary section (Section~\ref{ca}) has $10$ pages, which is usually too much for the paper. We expect that those readers who are familiar with the field will probably skip this section and start to read the paper from Section~\ref{da}. With a shorter version of this section, we found that the paper is not readable as we want it to be. 
}

A {\em directed graph} with {\em vertex set} $X$ and {\em arc set} $\E$ is a pair $\G=(X, \E)$ which consists of a finite set $X=X(\G)$ of {\em vertices} and a set $\E = \E(\G)$ of {\em arcs} ({\em directed edges}) between vertices of $\G$. As the initial and final vertices of an arc are not necessarily different, the directed graphs may have loops (arcs from a vertex to itself), and multiple arcs, that is, there can be more than one arc from each vertex to any other. If $e = (x,y)\in\E$ is an arc from $x$ to $y$, then the vertex $x$ (and the arc $e$) is {\em adjacent to} the vertex $y$, and the vertex $y$ (and the arc $e$) is {\em adjacent from} $x$. The {\em converse directed graph} $\ol{\G}$ is obtained from $\G$ by reversing the direction of each arc. 
For a vertex $x$, let $\G_1^{\la}(x)$ (and $\G_1^{\ra}(x)$) denote the set of vertices adjacent to (and from) the vertex $x$, respectively. In another words
$$
\G_1^{\ra}(x) = \{z\mid (x,z)\in \E(\G)\}
\qquad\mbox{ and }\qquad
\G_1^{\la}(x) = \{z\mid (z,x)\in \E(\G)\}.
$$
Two small comment about notation: (i) drawing directed edge from $x$ to $z$, we have $x\ra z$, which yields idea beyond using notation $\G_1^{\ra}(x)$; (ii) drawing directed edge from $z$ to $x$, we have $x\la z$ (or $z\ra x$), which yields idea beyond using notation $\G_1^{\la}(x)$. We abbreviate $\G_1(x)=\G_1^{\ra}(x)$. Also, instead of a set of vertices, we can consider a set of arcs: for a vertex $y$, let $D_1^{\la}(y)$ (and $D_1^{\ra}(y)$) denote the set of arcs adjacent to (and from) the vertex $y$, respectively. The number $|D_1^{\ra}(y)|$ we call the {\em out-degree of $y$} and is equal to the number of edges leaving $y$. The number $|D_1^{\la}(y)|$ we call the {\em in-degree of $y$} and is equal to the number of edges going to $y$. A directed graph $\G$ is {\em $k$-regular} if $|D^{\ra}_1(y)| = |D_1^{\la}(y)| = k$ for all $y\in X$.

\begin{figure}[t!]
\begin{center}
\begin{tikzpicture}[scale=.43]
\draw [line width=.8pt,draw=black,postaction={on each segment={mid arrow=black}}] (-9.72,2.6) -- (-3.14,7);
\draw [line width=.8pt,draw=black,postaction={on each segment={mid arrow=black}}] (-9.72,2.6) -- (0.42,-2.04);
\draw [line width=.8pt,draw=black,postaction={on each segment={mid arrow=black}}] (0.38,3.96) -- (-9.72,2.6);
\draw [line width=.8pt,draw=black,postaction={on each segment={mid arrow=black}}] (-3.04,0.98) -- (-9.72,2.6);
\draw [line width=.8pt,draw=black,postaction={on each segment={mid arrow=black}}] (-3.14,7) -- (-3.04,0.98);
\draw [line width=.8pt,draw=black,postaction={on each segment={mid arrow=black}}] (0.42,-2.04) -- (0.38,3.96);
\draw [line width=.8pt,draw=black,postaction={on each segment={mid arrow=black}}] (8.0001,2.64) -- (-3.14,7);
\draw [line width=.8pt,draw=black,postaction={on each segment={mid arrow=black}}] (0.38,3.96) -- (8.0001,2.64);
\draw [line width=.8pt,draw=black,postaction={on each segment={mid arrow=black}}] (-3.04,0.98) -- (8.0001,2.64);
\draw [line width=.8pt,draw=black,postaction={on each segment={mid arrow=black}}] (8.0001,2.64) -- (0.42,-2.04);
\draw [line width=.8pt,draw=black,postaction={on each segment={mid arrow=black}}] (0.42,-2.04) -- (-3.04,0.98);
\draw [line width=.8pt,draw=black,postaction={on each segment={mid arrow=black}}] (-3.14,7) -- (0.38,3.96);
\draw [fill=black] (-9.72,2.6) circle [radius=0.2];
\node at (-9.72,2.6) [anchor=east] {$a$};
\draw [fill=black] (-3.14,7) circle [radius=0.2];
\node at (-3.14,7) [anchor=south] {$b$};
\draw [fill=black] (0.42,-2.04) circle [radius=0.2];
\node at (0.42,-2.04) [anchor=north] {$c$};
\draw [fill=black] (0.38,3.96) circle [radius=0.2];
\node at (0.38,3.96) [anchor=south] {$d$};
\draw [fill=black] (-3.04,0.98) circle [radius=0.2];
\node at (-3.04,0.98) [anchor=north] {$e$};
\draw [fill=black] (8.0001,2.64) circle [radius=0.2];
\node at (8.0001,2.64) [anchor=west] {$f$};
\end{tikzpicture}~
{\small
\begin{tikzpicture}[scale=.47]
\draw[line width=.8pt,draw=black,postaction={on each segment={mid arrow=black}}] (-7.04,-0.01) -- (0,4.01);
\draw[line width=.8pt,draw=black,postaction={on each segment={mid arrow=black}}] (0.006,-4.014) -- (-7.04,-0.01);
\draw[line width=.8pt,draw=black,postaction={on each segment={mid arrow=black}}] (0,4.01) -- (0.006,-4.014);
\draw[line width=.8pt,draw=black,postaction={on each segment={mid arrow=black}}] (7.024,-0.01) -- (0,4.01);
\draw[line width=.8pt,draw=black,postaction={on each segment={mid arrow=black}}] (0.006,-4.014) -- (7.024,-0.01);
\draw [fill=white] (-7.04,-0.01) circle [radius=1.1];
\draw [fill=white] (0,4.01) circle [radius=1.1];
\draw [fill=white] (0.006,-4.014) circle [radius=1.1];
\draw [fill=white] (7.024,-0.01) circle [radius=1.1];
\node at (-7.04,-0.01) {$\P_0$};
\node at (0,4.01) {$\P_1$};
\node at (0.006,-4.014) {$\P_2$};
\node at (7.024,-0.01) {$\P_3$};
\node  at (-6.04,0.99) {$2$};
\node  at (-6.04,-1.01) {$2$};
\node  at (-7.04,-1.51) {--};
\node  at (0,5.51) {--};
\node  at (1.3,3.41) {$1$};
\node  at (-1.3,3.41) {$1$};
\node  at (0.3,2.51) {$2$};
\node  at (1.3,-3) {$1$};
\node  at (-1.3,-3) {$1$};
\node  at (0.006,-5.51) {--};
\node  at (0.3,-2.51) {$2$};
\node  at (7.024,-1.51) {--};
\node  at (5.624,0.9) {$2$};
\node  at (5.624,-0.7) {$2$};
\end{tikzpicture}
}
\caption{Directed graph $\G$ (from \cite[Example~5.4]{GCs}) of diameter $3$ and the intersection diagram of an equitable distance-faithful  partition $\Pi_a=\{\P_0=\{a\},\P_1=\{b,c\},\P_2=\{d,e\},\P_3=\{f\}\}$ of $\G$ (around vertex $a$). The adjacency matrix of this graph generates a commutative $3$-class association scheme. Note that $\G_1(a)=\P_1$, $\G_2(a)=\P_2$ and $\G_3(a)=\P_3$.}
\label{2f}
\end{center}
\end{figure}
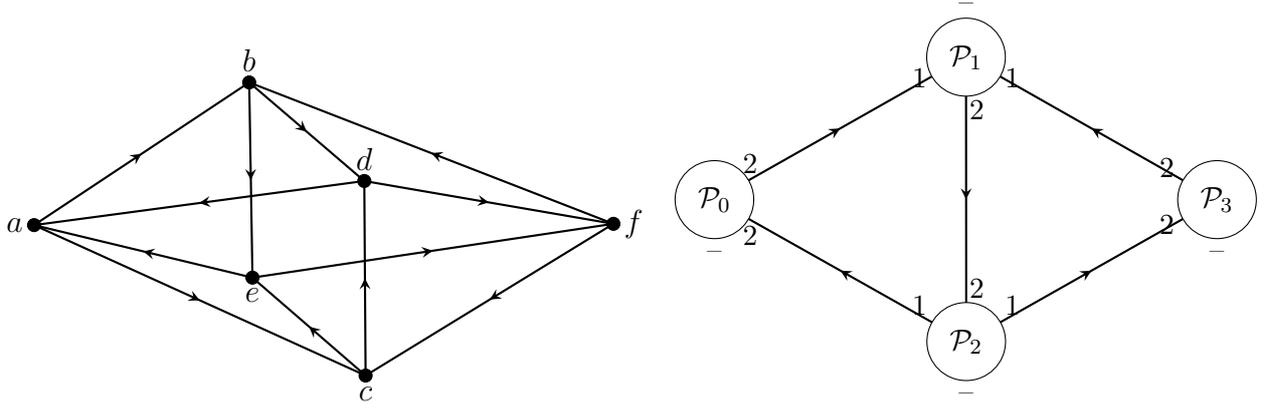

Let $\G = (X,\E)$ denote a directed graph. For any two vertices $x, y \in X$, a {\em directed walk} of length $h$ from $x$ to $y$ is a sequence $[x_0,x_1,x_2,\ldots,x_h]$ $(x_i\in X,\, 0\le i\le h)$ such that $x_0 = x$, $x_h = y$, and $x_i$ is adjacent to $x_{i+1}$ (i.e. $x_{i+1}\in\G^{\ra}_1(x_i)$) for $0\le i\le h-1$. We say that $\G$ is {\em strongly connected} if for any $x, y\in X$ there is a directed walk from $x$ to $y$. A {\em closed directed walk} is a directed walk from a vertex to itself. A {\em directed path} is a directed walk such that all vertices of the directed walk are distinct. A {\em cycle} is a closed directed path.

For any $x, y\in X$, the {\em distance} between $x$ and $y$, denoted by $\partial(x, y)$, is the length of a shortest directed path from $x$ to $y$. The {\em diameter} $D = D(\G)$ of a strongly connected directed graph $\G$ is defined to be 
$$
D = \max\{\partial(y,z)\,|\,y, z\in X\}.
$$
For a vertex $x\in X$ and any non-negative integer $i$ not exceeding $D$, let $\G^\ra_i(x)$ (or $\G_i(x)$) denote the subset of vertices in $X$ that are at distance $i$ from $x$, i.e.,
$$
\G^\ra_i(x)=\{z\in X\mid \partial(x,z)=i\}.
$$
We also define the set $\G^\la_i(x)$ as $\G^\la_i(x)=\{z\in X\mid \partial(z,x)=i\}$. Let $\G_{-1}(x) = \G_{D+1}(x) := \emptyset$. The elements of $\G_1(x)(=\G_1^{\ra}(x))$ are called {\em neighbors} of $x$. The {\it eccentricity} of $x$, denoted by $\varepsilon=\varepsilon(x)$, is the maximum distance between $x$ and any other vertex of $\G$. Note that the diameter of $\G$ equals $\max\{\varepsilon(x)\mid x\in X\}$.

All undirected graphs in this paper can be understood as directed graphs in which an undirected edge between two vertices $x$ and $y$ represents two arcs, an arc from $x$ to $y$, and an arc from $y$ to $x$. In diagrams instead of drawing two arcs we draw one undirected edge between vertices $x$ and $y$.  For a basic introduction to the theory of undirected graphs we refer to \cite[Section~2]{FMPS}. With the word {\em graph} we refer to a finite simple undirected graph.

We say that a graph $\G$ is {\em $N$-partite} if its set of vertices can be decomposed into $N$ disjoint sets such that no two vertices within the same set are adjacent. If $N=2$ such graphs are called {\em bipartite}. An {\em $N$-partite complete graph} $\G$ is $N$-partite graph for which there is an edge between every pair of vertices from different (disjoint) sets.


\subsection{Equitable and distance-faithful partition}

A {\it partition} of a (directed) graph $\G$ is a collection $\{\P_0, \P_1, \dots, \P_s\}$ of nonempty subsets of the vertex set $X$, such that $X=\ds\bigcup_{i=0}^s \P_i$ and $\P_i\cap\P_j=\emptyset$ for all $i,j$ $(0 \le i,j \le s,\, i\ne j)$. An {\it equitable partition} of a directed graph $\G$ is a partition $\{\P_0, \P_1, \dots, \P_s\}$ of its vertex set, such that for all integers $i,j$ $(0 \le i,j \le s)$ the following two conditions hold.
\begin{enumerate}[label=\rm(\roman*)]
\item The number $d^{\ra}_{ij}$ of neighbors which a vertex in the cell $\P_i$ has in the cell $\P_j$ is independent of the choice of the vertex in $\P_i$ (i.e., for every $y\in\P_i$ we have $|\G^{\ra}_1(y)\cap\P_j|=d^{\ra}_{ij}$).
\item The number $d^{\la}_{ij}$ of vertices from the cell $\P_j$ which are adjacent to a vertex in $\P_i$ is independent of the choice of the vertex in $\P_i$ (i.e., for every $y\in\P_i$ we have $|\sum_{z\in\P_j}|\G^{\ra}_1(z)\cap\{y\}|=d^{\la}_{ij}=|\G_1^\la(y)\cap\P_j|$).
\end{enumerate}
We call the numbers $d^{\ra}_{ij}$ and $d^{\la}_{ij}$ $(0\le i,j\le s)$ the {\em corresponding parameters}.

A {\it distance partition around} $x$ of a (directed) graph $\G$ with vertex set $X$ is a partition $\{\G_0(x)=\{x\},\G(x),\ldots,\G_{\varepsilon(x)}(x)\}$ of $X$ where $\varepsilon(x)$ is eccentricity of $x$. A {\it $x$-distance-faithful partition} $\{\P_0=\{x\},\P_1,\ldots,\P_s\}$ with $s\ge\varepsilon$ is a refinement of the distance partition around $x$ (here refinement means that some of $\G_i(x)$ can be equal to a union of some $\P_h$'s).

The {\it intersection diagram} of an equitable partition $\Pi$ of a graph $\G$ is a collection of circles indexed by the sets of $\Pi$ with lines (or directed edges) between some of them. If there is no line (directed edge) between $\P_i$ and $\P_j$, then it means that there is no (directed) edge $yz$ for any $y\in\P_i$ and $z\in\P_j$. If there is a line (directed edge) between $\P_i$ and $\P_j$, then a number on the line (from $\P_i$ to $\P_j$) near the circle $\P_i$ denotes the corresponding parameter $d^{\ra}_{ij}$. A number above or below a circle $\P_i$ denotes the corresponding parameter $d^{\ra}_{ii}(=d^{\la}_{ii})$. A similar explanation holds for the corresponding parameter $d^{\la}_{ij}$ (see Figures~\ref{2f} and \ref{2e} for an example).

We say that the combinatorial structure of the intersection diagram is {\em the same} around every vertex if for every vertex $x$ there exists an $x$-distance-faithful equitable partition with the same number of cells of same cardinality and (same) corresponding parameters do not depend on the choice of~$x$.

\begin{figure}[t!]
\begin{center}
\begin{tikzpicture}[scale=.43]
\draw [line width=.8pt] (-7.04,-0.01)-- (0,2.53);
\draw [line width=.8pt] (0,2.53)-- (-0.02,-2.51);
\draw [line width=.8pt] (-0.02,-2.51)-- (6.98,-0.99);
\draw [line width=.8pt] (6.98,-0.99)-- (6.98,1.01);
\draw [line width=.8pt] (6.98,1.01)-- (1,-5.51);
\draw [line width=.8pt] (1,-5.51)-- (1.02,5.51);
\draw [line width=.8pt] (1.02,5.51)-- (-7.04,-0.01);
\draw [line width=.8pt] (1,-5.51)-- (-7.04,-0.01);
\draw [line width=.8pt] (-7.04,-0.01)-- (-0.02,-2.51);
\draw [line width=.8pt] (-0.02,-2.51)-- (6.98,1.01);
\draw [line width=.8pt] (6.98,1.01)-- (1.02,5.51);
\draw [line width=.8pt] (1.02,5.51)-- (0,2.53);
\draw [line width=.8pt] (0,2.53)-- (6.98,-0.99);
\draw [line width=.8pt] (6.98,-0.99)-- (1,-5.51);
\draw [fill=black] (0,2.53) circle [radius=0.2];
\draw [fill=black] (-0.02,-2.51) circle [radius=0.2];
\draw [fill=black] (6.98,-0.99) circle [radius=0.2];
\draw [fill=black] (6.98,1.01) circle [radius=0.2];
\draw [fill=black] (1,-5.51) circle [radius=0.2];
\draw [fill=black] (1.02,5.51) circle [radius=0.2];
\draw [fill=black] (-7.04,-0.01) circle [radius=0.2];
{\tiny
\node at (-7.54,-0.01) {$0$};
\node at (0.5,2.53) {$1$};
\node at (-0.07,-3.01) {$2$};
\node at (7.48,-0.99) {$3$};
\node at (7.48,1.01) {$4$};
\node at (1.5,-5.51) {$5$};
\node at (1.52,5.51) {$6$};
}
\end{tikzpicture}\qquad\qquad
{\small
\begin{tikzpicture}[scale=.47]
\draw[line width=.8pt] (-7.04,-0.01) -- (0,4.01);
\draw[line width=.8pt] (-7.04,-0.01) -- (0.006,-4.014);
\draw[line width=.8pt] (0,4.01) -- (0.006,-4.014);
\draw[line width=.8pt] (0,4.01) -- (7.024,-0.01);
\draw[line width=.8pt] (0.006,-4.014) -- (7.024,-0.01);
\draw [fill=white] (-7.04,-0.01) circle [radius=1.1];
\draw [fill=white] (0,4.01) circle [radius=1.1];
\draw [fill=white] (0.006,-4.014) circle [radius=1.1];
\draw [fill=white] (7.024,-0.01) circle [radius=1.1];
\node at (-7.04,-0.01) {$\P_0$};
\node at (0,4.01) {$\P_1$};
\node at (0.006,-4.014) {$\P_2$};
\node at (7.024,-0.01) {$\P_3$};
\node  at (-6.04,0.99) {$2$};
\node  at (-6.04,-1.01) {$2$};
\node  at (-7.04,-1.51) {--};
\node  at (0,5.51) {$1$};
\node  at (1.3,3.41) {$1$};
\node  at (-1.3,3.41) {$1$};
\node  at (0.3,2.51) {$1$};
\node  at (1.3,-3) {$2$};
\node  at (-1.3,-3) {$1$};
\node  at (0.006,-5.51) {--};
\node  at (0.3,-2.51) {$1$};
\node  at (7.024,-1.51) {$1$};
\node  at (5.624,0.9) {$1$};
\node  at (5.624,-0.7) {$2$};
\end{tikzpicture}
}
\caption{Undirected graph $\G=\mbox{Cay}(\ZZ_{7};\{1,2\})$ of diameter $2$ and the intersection diagram of an equitable distance-faithful partition of $\G$ (around vertex $0$). The adjacency matrix of this graph generates a symmetric $3$-class association scheme.}
\label{2e}
\end{center}
\end{figure}
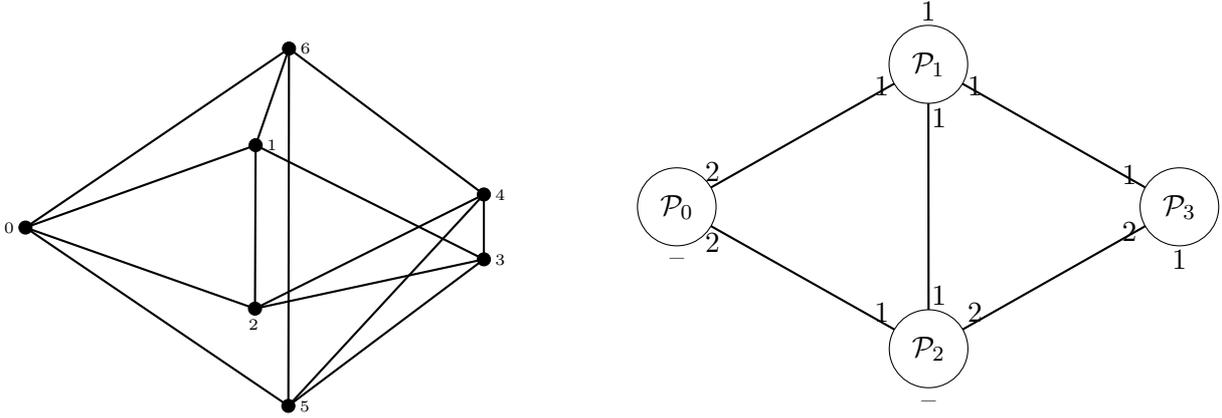

\subsection{Elementary algebraic graph theory}
\label{ga}

In this section, we recall some definitions and basic concepts from algebraic graph theory.

The {\em adjacency matrix} $A\in\Mat_X(\CC)$ of a directed graph $\G$ (with  vertex set $X$) is indexed by the vertices from $X$, and is defined in the following way
$$
\mbox{$(A)_{yz} =$ the number of arcs from $y$ to $z$}\qquad (y,z\in X)
$$
(note that $(A)_{yz}\ge 0$). Moreover, if we allow loops, the diagonal entries of $A$ can be different from zero. If there is at most one arc between pairs of vertices (i.e. the adjacency matrix $A$ is a $01$-matrix), the $yz$-entry of the power $A^\ell$ $(\ell\in\NN)$ corresponds to the number of $\ell$-walks from the vertex $y$ to the vertex $z$ in $\G$ (the proof is similar to \cite[Lemma~3.1]{SPm}).

\begin{lemma}
\label{hB}
Let $\G$ denote a simple strongly connected digraph with vertex set $X$, diameter $D$ and adjacency matrix $A$. The number of walks of length $\ell\in\NN$ in $\G$ from $x$ to $y$ is equal to $(x,y)$-entry of the matrix $A^\ell$.
\end{lemma}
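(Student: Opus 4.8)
The plan is to prove the statement by straightforward induction on the walk length $\ell$. The only property of $\G$ actually used is that it is simple, so that $A$ is a $01$-matrix and each entry $(A)_{zy}$ records exactly the number (zero or one) of arcs from $z$ to $y$; strong connectivity and the diameter $D$ play no role here and are merely part of the standing hypotheses carried over from the surrounding discussion.

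For the base case I would take $\ell=0$: here $A^0=I$, whose $(x,y)$-entry is $1$ if $x=y$ and $0$ otherwise, which matches the number of length-$0$ walks from $x$ to $y$ (the trivial walk $[x]$ when $x=y$, and none otherwise). Equivalently, one may start from $\ell=1$, where $(A)_{xy}$ is by definition the number of arcs, hence the number of length-$1$ walks, from $x$ to $y$.

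For the inductive step, assume the claim holds for some $\ell\ge 0$ and compute directly from the definition of matrix multiplication
$$
(A^{\ell+1})_{xy}=(A^{\ell}\cdot A)_{xy}=\sum_{z\in X}(A^{\ell})_{xz}\,(A)_{zy}.
$$
The key observation is a bijection: every walk $[x_0,\dots,x_{\ell+1}]$ of length $\ell+1$ from $x$ to $y$ decomposes uniquely as a walk $[x_0,\dots,x_{\ell}]$ of length $\ell$ from $x$ to its penultimate vertex $z:=x_{\ell}$, followed by a single arc $(z,y)\in\E(\G)$. Hence the number of length-$(\ell+1)$ walks from $x$ to $y$ whose penultimate vertex is a fixed $z$ equals the number of length-$\ell$ walks from $x$ to $z$ times the number of arcs from $z$ to $y$. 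Summing over all $z\in X$ and applying the inductive hypothesis to the first factor and the $01$-matrix definition of $A$ to the second, this total equals the displayed sum, that is $(A^{\ell+1})_{xy}$, completing the induction.

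I do not expect any genuine obstacle: this is the standard counting-by-the-final-arc decomposition. The only point requiring care is to confirm that the decomposition is a genuine bijection, i.e. that appending an arc $(z,y)$ to a length-$\ell$ walk ending at $z$ yields each length-$(\ell+1)$ walk to $y$ exactly once. This is immediate, since a walk is a finite sequence of vertices and both its penultimate vertex $x_\ell$ and its final arc $(x_\ell,x_{\ell+1})$ are uniquely determined by the sequence itself.
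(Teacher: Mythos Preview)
Your induction argument is correct and is the standard proof of this well-known fact. The paper does not actually give its own proof of this lemma: it merely states the result and, in the surrounding text, remarks that the proof is similar to \cite[Lemma~3.1]{SPm}. Your write-up therefore supplies exactly the omitted routine verification.
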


The distance-$i$ matrix $A_i$ of a digraph $\G$ with diameter $D$ and vertex set $X$ is defined by
$$
(A_i)_{zy}=\left\{\begin{matrix}
1&\mbox{ if $\partial(z,y)=i$},\\
0&\mbox{ otherwise.~~~~}
\end{matrix}\right. \qquad(z,y\in X,~0\le i\le D).
$$
In particular, $A_0=I$ and $A_1=A$. A matrix $A\in\Mat_X(\CC)$ is said to be a {\em reducible} when there exists a permutation matrix $P$ such that $P^\top A P=\left(
\begin{matrix}
X&Y\\ \O&Z
\end{matrix}
\right)$, where $X$ and $Z$ are both square, and $\O$ is a zero matrix. Otherwise, $A$ is said to be {\em irreducible}.

\begin{lemma}
\label{gb}
A directed graph $\G$ with adjacency matrix $A$ is strongly connected if and only if $A$ is an irreducible matrix.
\end{lemma}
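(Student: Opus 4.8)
The statement to prove is Lemma~\ref{gb}: a directed graph $\G$ with adjacency matrix $A$ is strongly connected if and only if $A$ is irreducible. This is a classical equivalence between a combinatorial property (strong connectivity) and a linear-algebraic property (irreducibility), so the plan is to translate each side into a statement about reachability and then exploit Lemma~\ref{hB}, which says that the $(x,y)$-entry of $A^\ell$ counts walks of length $\ell$ from $x$ to $y$.

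The cleanest route is to prove the contrapositive in both directions, phrasing everything in terms of reachability. First I would record the key dictionary: for a $01$-matrix (or any nonnegative integer matrix) $A$, there is a directed walk of some length from $x$ to $y$ if and only if $(A^\ell)_{xy}>0$ for some $\ell\ge 1$; this is immediate from Lemma~\ref{hB} together with nonnegativity, since a sum of nonnegative walk-counts is positive exactly when at least one walk exists. Then $\G$ is strongly connected precisely when for every ordered pair $(x,y)$ there is such an $\ell$, i.e.\ the set $X$ admits no proper nonempty subset $S$ that is ``closed'' under out-arcs (no arc leaving $S$).

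For the direction ``reducible $\Rightarrow$ not strongly connected'': if $A$ is reducible, there is a permutation matrix $P$ with $P^\top A P=\left(\begin{smallmatrix}X&Y\\ \OO&Z\end{smallmatrix}\right)$. The permutation $P$ merely relabels the vertices, so I may assume the vertex set is partitioned as $X=S\sqcup T$ (with $T$ indexing the bottom block) such that the block below the diagonal is zero, meaning $(A)_{ty}=0$ for every $t\in T$ and every $y\in S$. Hence no arc goes from $T$ to $S$, so no vertex of $S$ is reachable from any vertex of $T$, and $\G$ is not strongly connected. For the converse direction ``not strongly connected $\Rightarrow$ reducible'': if some vertex $y$ is not reachable from some vertex $x$, let $S$ be the set of all vertices reachable from $x$ (including $x$). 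Then $S$ is a proper (it misses $y$) nonempty (it contains $x$) subset with no arcs leaving it, because any out-neighbor of a reachable vertex is again reachable. Ordering the vertices so that the complement $T=X\setminus S$ comes last and $S$ comes first makes the block of $A$ with rows in $T$ and columns in $S$ identically zero; the corresponding permutation matrix $P$ then exhibits the block-triangular form, so $A$ is reducible.

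I do not expect a genuine obstacle here, as both implications reduce to careful bookkeeping about which entries the ``reachable set'' $S$ forces to vanish. The one point requiring mild care is matching the orientation conventions: the cited block form places the zero block in the lower-left, so the closed set must index the \emph{upper-left} block $X$ while its complement indexes $Z$; I would double-check that the paper's convention $(A)_{yz}=$ (number of arcs from $y$ to $z$) makes ``no arc from $T$ to $S$'' correspond to the lower-left block being zero (rows indexed by $T$, columns by $S$), and then choose the ordering $S$-then-$T$ accordingly so that $Z$ is the block on $T$. Strictly speaking the walk-counting Lemma~\ref{hB} is stated for $01$-matrices, so for a general adjacency matrix with multiple arcs I would instead argue reachability directly from the definition of a directed walk, which is the more robust formulation anyway.
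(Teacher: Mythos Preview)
Your argument is correct in substance and is in fact more detailed than the paper's own treatment: the paper does not give a proof at all, writing only ``Routine. (See, for example, \cite[Section~8.3]{MCm}.)'' So there is nothing to compare against beyond noting that your contrapositive-via-reachable-sets argument is exactly the standard textbook proof being cited.

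One small bookkeeping slip, which you yourself flagged as needing a double-check: with $S$ the set of vertices reachable from $x$ and no arcs leaving $S$, the vanishing entries are $(A)_{st}$ for $s\in S$, $t\in T$, i.e.\ rows in $S$ and columns in $T$, not the other way around. Hence ordering $S$ first puts the zero block in the \emph{upper-right}, not the lower-left; to match the paper's definition $P^\top AP=\left(\begin{smallmatrix}X&Y\\ \OO&Z\end{smallmatrix}\right)$ you should list $T$ first and $S$ second (or simply observe that an upper-right zero block is equivalent to a lower-left one by swapping the two blocks). This does not affect the validity of the argument.
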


\begin{proof}
Routine. (See, for example, \cite[Section~8.3]{MCm}.)
\end{proof}


\begin{theorem}[Perron--Frobenius Theorem]
\label{gc}
Let $\G=\G(A)$ denote a directed strongly connected graph with spectrum $\spec(\G)$. If $\theta=\max_{\lambda\in\spec(\G)}|\lambda|$, then the following hold.
\begin{enumerate}[label=\rm(\roman*)]
\item $\theta\in\spec(\G)$.
\item The algebraic multiplicity of $\theta$ is equal to $1$.
\item There exists an eigenvector $\nnu$ with all positive entries, such that $A\nnu = \theta \nnu$.
\end{enumerate}
Sometimes it is useful to normalize a vector $\nnu$ from {\rm (iii)} in such a way that the smallest entry is equal to $1$. Such a vector $\nnu$ is called a Perron--Frobenius eigenvector.
\end{theorem}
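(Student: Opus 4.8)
The plan is to reduce the three claims to the classical Perron--Frobenius theorem for irreducible nonnegative matrices. Two features of $A$ are available at the outset: $A$ is entrywise nonnegative (its entries count arcs, so $(A)_{yz}\ge 0$), and, by Lemma~\ref{gb}, strong connectivity of $\G$ is equivalent to irreducibility of $A$. The single combinatorial input I would extract first is a \emph{positivity lemma}: writing $n=|X|$, the matrix $B=(I+A)^{n-1}$ is strictly positive (every entry is $>0$). Indeed, expanding $B=\sum_{k=0}^{n-1}\binom{n-1}{k}A^k$ and using Lemma~\ref{hB}, the $(x,y)$-entry is a positive combination of the numbers of walks of lengths $0,1,\dots,n-1$ from $x$ to $y$; since $\G$ is strongly connected on $n$ vertices, there is always a directed walk of length at most $n-1$ between any ordered pair, so each entry is positive. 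Note that $B$ commutes with $A$, and that every eigenvector of $A$ is an eigenvector of $B$.

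For parts (i) and (iii) I would use the Collatz--Wielandt variational characterization. On the standard simplex $S=\{x\in\RR^n: x_i\ge 0,\ \sum_i x_i=1\}$ define $f(x)=\min_{i:\,x_i>0}(Ax)_i/x_i$, and set $\rho=\sup_{x\in S}f(x)$. The function $f$ is only upper semicontinuous on $S$, so to guarantee that the supremum is attained I would restrict attention to the compact set $B\,S$, which lies in the open positive orthant (since $B>0$); there $f$ is continuous, and using $AB=BA$ together with the scale-invariance of $f$ one checks $f(Bx)\ge f(x)$, so the supremum is realized at some $\vv\in B\,S$ with $\vv>0$. The vector $\vv$ is then the sought eigenvector: if $A\vv-\rho\vv\ge 0$ were nonzero, multiplying by the positive matrix $B$ would yield $A(B\vv)-\rho(B\vv)>0$ entrywise, i.e.\ $f(B\vv)>\rho$, contradicting maximality; hence $A\vv=\rho\vv$ with $\vv>0$, proving (iii). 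To see $\rho=\theta$ (part (i)), for any eigenvalue $\lambda$ with eigenvector $w$ take absolute values entrywise: $A|w|\ge|Aw|=|\lambda|\,|w|$ gives $f(|w|)\ge|\lambda|$, whence $|\lambda|\le\rho$; since $\rho\in\spec(\G)$ already, $\rho=\max_{\lambda}|\lambda|=\theta$.

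For part (ii) I would separate geometric from algebraic multiplicity. Geometric simplicity: if $w$ is any real eigenvector for $\theta$, then for a suitable scalar $t$ the vector $\vv-tw\ge 0$ has at least one zero coordinate; being an eigenvector for $\theta$, applying $B$ would force $B(\vv-tw)>0$ with a genuine zero coordinate unless $\vv-tw=0$, so $w$ must be a multiple of $\vv$. Algebraic simplicity is the more delicate point, and is where I expect the main obstacle: I would apply the already-proved existence statement to $A^\top$ (also nonnegative and irreducible) to obtain a strictly positive left eigenvector $\ww>0$ with $\ww^\top A=\theta\,\ww^\top$. If $\theta$ had a Jordan block of size $\ge 2$, there would exist $u$ with $(A-\theta I)u=\vv$; then $0=\ww^\top(A-\theta I)u=\ww^\top\vv$, which is impossible because $\ww>0$ and $\vv>0$ force $\ww^\top\vv>0$. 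Hence the algebraic multiplicity of $\theta$ equals its geometric multiplicity, namely $1$.

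The technical heart of the argument is thus twofold: handling the lack of continuity of the Collatz--Wielandt function $f$ (resolved by passing to the positive matrix $B=(I+A)^{n-1}$), and upgrading geometric to algebraic simplicity in (ii) (resolved by the positive left-eigenvector pairing). Everything else is bookkeeping with the nonnegativity and irreducibility of $A$. Since this is the classical Perron--Frobenius theorem, one could alternatively simply invoke it from a standard reference such as \cite{MCm}, but the sketch above indicates a self-contained route that uses only Lemmas~\ref{hB} and~\ref{gb}.
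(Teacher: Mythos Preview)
Your proposal is correct and gives a self-contained proof of the classical Perron--Frobenius theorem via the Collatz--Wielandt characterization, handling the two genuine technical points (attainment of the supremum by passing to the positive matrix $B=(I+A)^{n-1}$, and algebraic simplicity via the positive left eigenvector pairing) carefully and correctly.

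The paper, by contrast, does not prove the statement at all: its entire ``proof'' is the sentence ``Routine using Lemma~\ref{gb}. (See, for example, \cite[Section~8.3]{MCm}.)''. In other words, the paper invokes the equivalence between strong connectivity and irreducibility (Lemma~\ref{gb}) and then defers to the standard reference. Your final paragraph already anticipates exactly this option. So the two approaches differ only in that you actually sketch the argument behind the citation; what your route buys is independence from the external reference, at the cost of a page of standard material that the paper is content to outsource. Either is acceptable for a preliminaries section.
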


\begin{proof}
Routine using Lemma~\ref{gb}. (See, for example, \cite[Section~8.3]{MCm}.)
\end{proof}

\smallskip
A matrix $A\in\Mat_X(\CC)$ is called {\em normal} if it commutes with its adjoint, i.e. if $A\ol{A}^\top=\ol{A}^\top A$.

\begin{theorem}
\label{gd}
Let $A\in\Mat_X(\CC)$ denote a matrix over $\CC$, with rows and columns indexed by $X$. Then, the following are equivalent.
\begin{enumerate}[label=\rm(\roman*)]
\item $A$ is normal.
\item $\CC^{|X|}$ has an orthonormal basis consisting of eigenvectors of $A$.
\item $A$ is a diagonalizable matrix.
\item The algebraic multiplicity of $\lambda$ is equal to the geometric multiplicity of $\lambda$, for every eigenvalue $\lambda$ of $A$.
\end{enumerate}
\end{theorem}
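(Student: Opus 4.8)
The plan is to route everything through the spectral theorem for a single normal matrix, whose engine is \emph{Schur's triangularization}: every $A\in\Mat_X(\CC)$ can be written $A=U T\ol{U}^\top$ with $U$ unitary and $T$ upper triangular. First I would prove (i)$\Rightarrow$(ii). If $A$ is normal then so is $T=\ol{U}^\top A U$, since conjugation by a unitary preserves the defining relation $A\ol{A}^\top=\ol{A}^\top A$. Comparing the diagonal entries of $T\ol{T}^\top$ and $\ol{T}^\top T$ then forces $T$ to be diagonal: the $(1,1)$ comparison gives $\sum_{j}|T_{1j}|^2=|T_{11}|^2$, hence $T_{1j}=0$ for $j>1$, and an induction up the diagonal kills every off-diagonal entry. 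With $T$ diagonal, the columns of $U$ form an orthonormal eigenbasis of $A$, which is (ii).

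For (ii)$\Rightarrow$(i) I would run the computation backwards: collecting an orthonormal eigenbasis as the columns of a unitary $U$ gives $A=U\Lambda\ol{U}^\top$ with $\Lambda$ diagonal, so $\ol{A}^\top=U\ol{\Lambda}\ol{U}^\top$, and since diagonal matrices commute we get $A\ol{A}^\top=U\Lambda\ol{\Lambda}\ol{U}^\top=U\ol{\Lambda}\Lambda\ol{U}^\top=\ol{A}^\top A$. This closes (i)$\Leftrightarrow$(ii).

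The remaining items concern \emph{ordinary} diagonalizability. The implication (ii)$\Rightarrow$(iii) is immediate, since an orthonormal eigenbasis is in particular a basis of eigenvectors, making $A$ similar to $\Lambda$. For (iii)$\Leftrightarrow$(iv) I would invoke the standard criterion: writing $m_g(\lambda)$ for the geometric and $m_a(\lambda)$ for the algebraic multiplicity, one always has $m_g(\lambda)\le m_a(\lambda)$ and $\sum_\lambda m_a(\lambda)=|X|$, so $A$ admits $|X|$ independent eigenvectors (i.e.\ is diagonalizable) if and only if $\sum_\lambda m_g(\lambda)=|X|$, which happens exactly when $m_g(\lambda)=m_a(\lambda)$ for every $\lambda$.

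The step I expect to be the real obstacle is closing the cycle back to normality, i.e.\ deriving (i) from (iii) or (iv). Ordinary diagonalizability does \emph{not} imply normality: the matrix $\left(\begin{smallmatrix}1&1\\0&2\end{smallmatrix}\right)$ has distinct eigenvalues, hence is diagonalizable with $m_g=m_a$ throughout, yet $A\ol{A}^\top\ne\ol{A}^\top A$. So the four statements are genuinely equivalent only once (iii) is read as \emph{unitary} diagonalizability, $A=U\Lambda\ol{U}^\top$ with $U$ unitary --- which is just (ii) in disguise; under that reading (iii)$\Rightarrow$(i) is the same one-line computation as (ii)$\Rightarrow$(i). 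I would therefore either restate (iii) as unitary diagonalizability, or present the result as the two clean equivalences (i)$\Leftrightarrow$(ii) and (iii)$\Leftrightarrow$(iv) linked by the one-way implication (ii)$\Rightarrow$(iii).
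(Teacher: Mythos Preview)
Your analysis is both correct and more careful than the paper itself. The paper's own ``proof'' is simply the word \emph{Routine} together with a textbook citation; no argument is given. So there is nothing substantive to compare your approach against.

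More importantly, you have identified a genuine defect in the \emph{statement}: as written, the four items are not equivalent. Your counterexample $\left(\begin{smallmatrix}1&1\\0&2\end{smallmatrix}\right)$ is diagonalizable with $m_g(\lambda)=m_a(\lambda)$ for each eigenvalue, yet is not normal, so (iii)$\Rightarrow$(i) and (iv)$\Rightarrow$(i) both fail. The correct picture is exactly what you describe: (i)$\Leftrightarrow$(ii) is the spectral theorem for normal matrices, (iii)$\Leftrightarrow$(iv) is the standard diagonalizability criterion, and only the one-way implication (ii)$\Rightarrow$(iii) links the two pairs. The four become equivalent only if ``diagonalizable'' in (iii) is read as ``unitarily diagonalizable,'' which collapses (iii) into (ii).

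For the paper's purposes this slip is harmless: every subsequent use of the result (e.g.\ Lemma~\ref{cc} feeding into Theorem~\ref{gf}) invokes only the direction (i)$\Rightarrow$(ii)/(iii), which is true. But your instinct to flag the reverse direction is sound, and your proposed Schur-triangularization proof of (i)$\Leftrightarrow$(ii) together with the multiplicity-counting argument for (iii)$\Leftrightarrow$(iv) is the standard route and is correct.
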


\begin{proof}
Routine. (See, for example, \cite[Chapter~7]{AS}.)
\end{proof}

Two matrices $A,B\in\Mat_X(\CC)$ are said to be {\em simultaneously diagonalizable} if there is a nonsingular $S\in\Mat_X(\CC)$ such that $S^{-1}AS$ and $S^{-1}BS$ are both diagonal.

\begin{lemma}[{\cite[Theorem 1.3.12]{HJ}}]
\label{ge}
Two diagonalizable matrices are simultaneously diagonalizable if and only if they commute.
\end{lemma}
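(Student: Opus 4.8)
The plan is to prove the two implications separately; the converse (simultaneous diagonalizability implies commutativity) is immediate, while the forward direction carries all the content. For the converse, suppose there is a nonsingular $S\in\MX$ with $S^{-1}AS=D_1$ and $S^{-1}BS=D_2$ both diagonal. Any two diagonal matrices commute, so $D_1D_2=D_2D_1$, and conjugating this identity back by $S$ yields $AB=BA$.

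For the forward direction, I would assume $AB=BA$ with both $A$ and $B$ diagonalizable. Write $\lambda_1,\ldots,\lambda_r$ for the distinct eigenvalues of $A$ and set $E_i=\ker(A-\lambda_i I)$. Since $A$ is diagonalizable, $\CC^{|X|}=E_1\oplus\cdots\oplus E_r$. The first step is to observe that each $E_i$ is $B$-invariant: if $v\in E_i$, then $A(Bv)=BAv=\lambda_i\,Bv$, so $Bv\in E_i$. Hence $B$ restricts to a linear operator $B_i:=B|_{E_i}$ on each summand.

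The main step, and the crux of the argument, is to show that each restriction $B_i$ is itself diagonalizable. Here I would invoke the criterion that a matrix is diagonalizable exactly when its minimal polynomial is a product of distinct linear factors. Since $B$ is diagonalizable, its minimal polynomial $m_B$ has this form; and because $E_i$ is $B$-invariant, the minimal polynomial of $B_i$ divides $m_B$, so it too splits into distinct linear factors and $B_i$ is diagonalizable. Consequently each $E_i$ admits a basis consisting of eigenvectors of $B_i$, and since these vectors already lie in $E_i$ they are simultaneously eigenvectors of $A$ for the eigenvalue $\lambda_i$.

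Finally, the union of these bases over $i=1,\ldots,r$ is a basis of $\CC^{|X|}$ consisting of common eigenvectors of $A$ and $B$; taking $S$ to be the matrix whose columns are these vectors simultaneously diagonalizes both. The only point requiring care is the diagonalizability of the restriction $B_i$, for which the minimal-polynomial criterion is the cleanest tool; the rest is bookkeeping with the eigenspace decomposition of $A$.
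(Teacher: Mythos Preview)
Your proof is correct and follows the standard textbook argument. Note that the paper does not actually supply its own proof of this lemma: it is stated with a citation to \cite[Theorem~1.3.12]{HJ} and used as a black box, so there is nothing to compare against beyond observing that your argument is essentially the one found in Horn and Johnson.
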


\begin{theorem}
\label{gf}
Let $\M$ denote a space of commutative normal matrices. Then, there exists a unitary matrix $U\in\Mat_X(\CC)$ which diagonalizes $\M$.
\end{theorem}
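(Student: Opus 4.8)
The plan is to reduce the entire commuting family to a single, cleverly chosen element $C\in\M$ whose eigenspace decomposition is simultaneously an eigenspace decomposition for every matrix in $\M$, and then to cash in orthogonality exactly once, through the spectral theorem for the single normal matrix $C$ recorded in Theorem~\ref{gd}. First I would exploit finite-dimensionality: since $\M\subseteq\Mat_X(\CC)$ is a vector space, fix a basis $B_1,\ldots,B_m$ of $\M$. Each $B_i$ is normal, hence diagonalizable by Theorem~\ref{gd}, and the $B_i$ pairwise commute because $\M$ is commutative. Lemma~\ref{ge}, applied inductively (the two-matrix statement extends to any finite commuting family of diagonalizable matrices by restricting to the eigenspaces of $B_1$ and repeating), yields a nonsingular $S$ with $S^{-1}B_iS$ diagonal for all $i$. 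Reading the diagonal entries column by column produces, for each column, an eigenvalue-tuple, and grouping columns by equal tuples exhibits the joint eigenspaces $V_1,\ldots,V_k$, where $V_j=\bigcap_{i}\ker(B_i-\lambda_{ij}I)$, each $B_i$ acts on $V_j$ as the scalar $\lambda_{ij}$, the tuples $(\lambda_{ij})_i$ are pairwise distinct for distinct $j$, and $\CC^{|X|}=\bigoplus_{j=1}^{k}V_j$.

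Next I would manufacture a single separating element. For distinct $j\ne j'$ the tuples $(\lambda_{ij})_i$ and $(\lambda_{ij'})_i$ differ, so the linear condition $\sum_i c_i(\lambda_{ij}-\lambda_{ij'})=0$ cuts out a proper hyperplane in the space of coefficient vectors $(c_1,\ldots,c_m)\in\CC^m$; since a finite union of proper hyperplanes cannot cover $\CC^m$, I can choose coefficients $c_i$ so that the scalars $\mu_j:=\sum_i c_i\lambda_{ij}$ are pairwise distinct. Set $C=\sum_i c_iB_i\in\M$. Then $C$ acts as $\mu_j$ on $V_j$, and because the $\mu_j$ are distinct while the $V_j$ exhaust $\CC^{|X|}$, the eigenspaces of $C$ are precisely $V_1,\ldots,V_k$.

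Finally I would use normality only once. The element $C$ lies in $\M$, hence is normal by hypothesis, so by Theorem~\ref{gd}(ii) the space $\CC^{|X|}$ admits an orthonormal basis of eigenvectors of $C$. Since the $\mu_j$-eigenspace of $C$ equals $V_j$, this orthonormal basis splits into orthonormal bases of the individual $V_j$. Let $U$ be the matrix whose columns are these basis vectors; then $U$ is unitary. Each $B_i$ is scalar ($=\lambda_{ij}$) on $V_j$, so $U^{-1}B_iU$ is diagonal for every $i$, and therefore $\ol{U}^\top B U$ is diagonal for every $B\in\M$. Thus $U$ diagonalizes $\M$.

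I expect the only genuine subtlety to be the passage from the merely nonsingular simultaneous diagonalizer supplied by Lemma~\ref{ge} to a \emph{unitary} one. The device that overcomes it is the separating element $C$, which transfers the orthogonality built into the spectral theorem for one normal matrix (Theorem~\ref{gd}) onto the entire commuting family at once, using that $C\in\M$ is normal by hypothesis and thereby sidestepping any appeal to Fuglede-type results about commutation with adjoints.
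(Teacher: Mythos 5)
Your proof is correct, and it is genuinely more self-contained than the paper's, which gives no argument at all: the paper's proof of Theorem~\ref{gf} reads ``Immediate from Theorem~\ref{gd}, Lemma~\ref{ge} and \cite[Subsection~1.3]{HJ}'', deferring all content to Horn--Johnson. Your route correctly isolates the point that this citation glosses over: Lemma~\ref{ge} only supplies a \emph{nonsingular} simultaneous diagonalizer $S$, not a unitary one. Your separating element $C=\sum_i c_i B_i$, with coefficients chosen off a finite union of proper hyperplanes so that its eigenvalues $\mu_j$ distinguish the joint eigenspaces $V_j$, bridges exactly this gap: since $C\in\M$ is normal by hypothesis (this is where you use that \emph{every} element of $\M$ is normal), Theorem~\ref{gd}(ii) gives an orthonormal eigenbasis of $C$, and because the eigenspaces of $C$ are precisely the $V_j$, this basis is automatically a simultaneous orthonormal eigenbasis for all of $\M$. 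Two small remarks: in the inductive extension of Lemma~\ref{ge} that you sketch (restricting to the eigenspaces of $B_1$, which are invariant under the other $B_i$, and recursing), one should also note that the restriction of a diagonalizable matrix to an invariant subspace is again diagonalizable --- a standard fact, but it is the hinge of the induction; and your separating-element device is pleasantly consonant with the rest of the paper, which uses the same generic-linear-combination idea (distinct scalars producing an element with the maximal number of distinct eigenvalues) in the Introduction, in Lemma~\ref{dk}, and in Lemma~\ref{ch} together with Proposition~\ref{do}. What the paper's approach buys is brevity and an appeal to a textbook result; what yours buys is an explicit, verifiable argument whose only external inputs are the two results already stated in the paper.
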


\begin{proof}
Immediate from Theorem~\ref{gd}, Lemma~\ref{ge} and \cite[Subsection~1.3]{HJ}.
\end{proof}

Let $\G$ denote a regular graph with vertex set $X$ and $\circ$ denote the elementwise--Hadamard product of matrices. Let us call two $01$-matrices $B$, $C$ {\em disjoint} if $B\circ C=0$. For the moment, let $\B$ denote some algebra of $|X|\times |X|$ matrices. A basis $\{B_0,B_1,\ldots,B_d\}$ of $\B$ is called a {\em standard basis} of $\B$ if and only if the $B_i$'s are mutually disjoint $01$-matrices which satisfy the following properties: {\rm (i)}~the sum of some of these matrices gives $I$; {\rm(ii)}~the sum of all of these matrices gives the all-$1$ matrix $J$; {\rm(iii)}~for each $i\in \{0,\ldots,d\}$, the conjugate transpose of $B_i$ belongs to $\{B_0,B_1,\ldots,B_d\}$; and {\rm(iv)}~the vector space spanned by $\{B_0,B_1,\ldots,B_d\}$ is closed under both ordinary and elementwise--Hadamard multiplication.


\subsection{Commutative association scheme}\label{COM}

Let $X$ denote a finite set and $\Mat_X(\CC)$ the set of complex matrices with rows and columns indexed by $X$. Let $\R=\{R_0,R_1,\ldots,R_d\}$ denote a set of cardinality $d+1$ of nonempty subsets of $X\times X$. The elements of the set $\R$ are called {\em relations} (or {\em classes}) on $X$. For each integer $i$ $(0\le i\le d)$, let $A_i\in\Mat_X(\CC)$ denote the adjacency matrix of the graph $(X,R_i)$ (directed, in general). The pair ${\XXi}=(X,\R)$ is a {\em commutative $d$-class association scheme} (or a {\em $d$-class scheme} for short) if
\begin{enumerate}[leftmargin=1.7cm,label=\rm(AS\arabic*)]
\item $A_0=I$, the identity matrix.\label{ce}
\item $\ds{\sum_{i=0}^d A_i=J}$, the all-ones matrix.\label{cg}
\item ${A_i}^\top\in\{A_0,A_1,\ldots,A_d\}$ for $0\le i\le d$.
\item $A_iA_j$ is a linear combination of $A_0,A_1,\ldots,A_d$ for $0\le i,j\le d$ (i.e., for every $i,j$ $(0\le i,j\le d)$ there exist {\em intersection numbers} $p^h_{ij}$, $0\le h\le d$, such that $A_iA_j=\sum_{h=0}^d p^h_{ij} A_h$).\label{cb}
\item $A_iA_j=A_jA_i$ for every $i,j$ $(0\le i,j\le d)$ (i.e., for the intersection numbers $p^h_{ij}$, $0\le i,j,h\le d$, from \ref{cb} we have that $p^h_{ij}=p^h_{ji}$).\label{cf}
\end{enumerate}

By \ref{ce}--\ref{cf} the vector space $\M=\Span\{A_0,A_1,\ldots,A_d\}$ is a commutative algebra; we call it the {\em Bose--Mesner algebra} of $\XXi$. The set of $(0,1)$-matrices $\{A_0,A_1,\ldots,A_d\}$ is linearly independent by \ref{cg} and thus forms a basis of $\M$. We say that $\XXi$ is {\em symmetric} if the $A_i$'s are symmetric matrices.

For the moment, pick $h$ $(0\le h\le d)$ and let $x,y\in X$ denote two vertices such that $(A_h)_{xy}=1$. By \ref{cg} and \ref{cb}, $(A_iA_j)_{xy}=p^h_{ij}$ $(0\le i,j\le d)$. On the other hand
\begin{align*}
(A_iA_j)_{xy}&=\sum_{z\in X} (A_i)_{xz} (A_j)_{zy}\\
&=|\{z\in X\mid (A_i)_{xz}=1 \mbox{ and } (A_j)_{zy}=1\}|\\
&=|\{z\in X\mid (x,z)\in R_i \mbox{ and } (z,y)\in R_j\}|,
\end{align*}
which yields $p^h_{ij}=|\{z\in X\mid (x,z)\in R_i \mbox{ and } (z,y)\in R_j\}|$. This suggests an equivalent {\em combinatorial} definition of a commutative association scheme (the following axioms are the combinatorial analogs of those given in \ref{ce}--\ref{cf}):
\begin{enumerate}[leftmargin=1.7cm,label=\rm(AS\arabic*')]
\item $R_0 = \{(x, x) \mid x \in X\}$ (that is, $R_0$ is the \emph{diagonal relation}).
\item $\{R_i\}_{i=0}^d$ is a partition of the Cartesian product $X\times X$.
\item Relation $R^\top_j = \{(y, x)\mid (x, y)\in R_j\}$ is in $\{R_i\}_{0\le i\le d}$, for each $j\in\{0,\ldots , d\}$ (that is, $\{R_i\}_{i=0}^d$ is closed under taking the {\em transpose relation $^\top$}).
\item For each triple $i,j,h$ $(0\le i,j,h\le d)$, and $(x, y) \in R_h$, a scalar
\begin{equation}
\label{cv}
|\{z\in X \mid (x,z)\in R_i \hbox{ and } (z,y)\in R_j\}|
\end{equation}
does not depend on the choice of the pair $(x, y) \in R_h$. The scalars obtained from line \eqref{cv} we denote by $p^h_{ij}$ and call the {\em intersection numbers} of $\XXi$.
\item For each triple $i,j,h$ $(0\le i,j,h\le d)$, $p^h_{ij}=p^h_{ji}$.
\end{enumerate}
Note that association scheme is \emph{symmetric} if $R_i=R_i^\top$, for each $i$ $(0\le i\le d)$. Immediately from the combinatorial definition, for example, we can get some properties on the intersection numbers that we will use later: in particular, pick $j$ $(0\le j\le d)$, let $(x,y)\in R_j$, and note that
\begin{align*}
\sum_{\ell=0}^d p^j_{k\ell} 
&= \sum_{\ell=0}^d |\{z\in X\mid (x,z)\in R_k \mbox{ and } (z,y)\in R_\ell\}|\\
&=|\{z\in X\mid (x,z)\in R_k\}|\\
&= p^0_{kk}.
\end{align*}
We abbreviate $n_k:=p^0_{kk}$ $(0\le k\le d)$. The number $n_k$ is the so-called \emph{valency} of the relation $R_k$. For any $w\in X$, the comments from above imply
\begin{equation}
\label{ck}
(A_k\jj)_w=|\{z\in X\mid (w,z)\in R_k\}|=p^0_{kk}=n_k=\sum_{\ell=0}^d p^j_{k\ell}\qquad(0\le k\le d).
\end{equation}
Equation \eqref{ck} also yields that all-$1$ vector $\jj$ is an eigenvector of $A_k$ $(0\le k\le d)$ that corresponds to the eigenvalue $n_k$.

\begin{lemma}
\label{cc}
With respect to the above notation, the Bose--Mesner algebra $\M$ of a commutative $d$-class association scheme $\XXi$ is a space of commutative normal matrices.
\end{lemma}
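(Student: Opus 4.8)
The plan is to verify directly the two defining properties of ``a space of commutative normal matrices'': that any two elements of $\M$ commute, and that every element of $\M$ is normal. The first property I expect to read off immediately from the axioms, while for the second I would first establish that $\M$ is closed under the conjugate-transpose (adjoint) operation and then combine this closure with commutativity.

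For commutativity I would argue as follows. Since $\{A_0,\ldots,A_d\}$ is a basis of $\M$ and these basis matrices commute pairwise by \ref{cf}, bilinearity of matrix multiplication gives, for arbitrary $B=\sum_{i=0}^d b_iA_i$ and $C=\sum_{j=0}^d c_jA_j$ in $\M$,
$$
BC=\sum_{i,j}b_ic_j\,A_iA_j=\sum_{i,j}c_jb_i\,A_jA_i=CB.
$$

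For normality, the key observation I would use is that each $A_i$ is a $01$-matrix, hence real, so $\ol{A_i}=A_i$ and therefore $\ol{A_i}^\top=A_i^\top$. By axiom (AS3), $A_i^\top$ is again one of $A_0,\ldots,A_d$; that is, the map sending $i$ to the index $i'$ with $A_i^\top=A_{i'}$ is a permutation of $\{0,\ldots,d\}$. It then follows that for any $B=\sum_{i=0}^d b_iA_i\in\M$ its adjoint
$$
\ol{B}^\top=\sum_{i=0}^d \ol{b_i}\,\ol{A_i}^\top=\sum_{i=0}^d \ol{b_i}\,A_{i'}
$$
is again a complex linear combination of the basis matrices, so $\ol{B}^\top\in\M$. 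Since both $B$ and $\ol{B}^\top$ then lie in $\M$, the commutativity established above would give $B\,\ol{B}^\top=\ol{B}^\top B$, i.e., $B$ is normal; as $B$ is arbitrary, every element of $\M$ is normal.

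The only step requiring any real care is the closure of $\M$ under the adjoint: one must pass through $\ol{A_i}^\top=A_i^\top$ using the realness of the $01$-matrices before invoking the transpose-closure axiom (AS3), rather than conflating the adjoint with the ordinary transpose. I do not expect a genuine obstacle beyond this bookkeeping, since the whole statement is a direct consequence of the scheme axioms \ref{cf} and (AS3).
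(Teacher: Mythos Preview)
Your proof is correct and follows essentially the same approach as the paper: both rely on the fact that each $A_i$ is a real $01$-matrix together with axioms (AS3) and \ref{cf}. The paper's own argument is the one-line observation that $A_i\ol{A_i}^\top=\ol{A_i}^\top A_i$ for each $i$ and then ``the result follows''; your version is simply a more careful unpacking of that sentence, making explicit the closure of $\M$ under the adjoint and the passage from commutativity of the basis to normality of arbitrary elements.
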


\begin{proof}
From the definition of $\XXi$, $A_i\overline{A_i}^\top=\overline{A_i}^\top A_i$ $(0\le i\le d)$, and the result follows.
\end{proof}

\medskip
Let $V=\CC^{|X|}$ denote the set of complex column vectors with coordinates indexed by $X$, and observe that $\Mat_X(\CC)$ acts on $V$ from the left. We endow $V$ with the Hermitian inner product $\langle \cdot , \cdot \rangle$ that satisfies $\langle u,v \rangle = \overline{u}^\top v$ for $u,v \in V$, where ``$\top$" denotes transpose and ``$\overline{\phantom{v}}$" denotes complex conjugation. By Lemma~\ref{cc} and Theorem~\ref{gf}, the matrices of $\M$ are simultaneously diagonalizable by a unitary matrix. This yields that there is a unitary matrix $U\in\Mat_X(\CC)$ such that $\overline{U}^\top\M U$ consists of diagonal matrices only. Each column $\uu$ of $U$ is a common eigenvector of $A_0$, $A_1,\ldots,A_d$. We can permute columns of matrix $U$ and collect them together in a partition of $r+1$ different matrices $U_i$
$$
U=\left(
\begin{matrix}
|&|&~&|\\
U_0&U_1&\cdots&U_r\\
|&|&~&|
\end{matrix}
\right)
$$
in such a way that the following holds 
\begin{enumerate}[label=$\bullet$]
\item there exists a complex scalar $p_h(i)$ $(0\le i\le r, 0\le h\le d)$ such that for each column $\uu$ of $U_i$ we have $A_h\uu=p_h(i)\uu$.
\end{enumerate}
Let $V_i$ denote the vector space spanned by the columns of $U_i$ $(0\le i\le r)$, i.e. $V_i=\im U_i$ $(0\le i\le r)$. The space $V_i$ is a {\em common eigenspace} of $A_0,A_1,\ldots,A_d$. The set of the common eigenspaces $\{V_0,V_1,\ldots,V_r\}$ is {\em maximal} if for any $i\ne j$ $(0\le i,j\le r)$ there exists $A_h$ such that the eigenvalue of $A_h$ on $V_i$ is different from that on $V_j$.

Using the above notation it is not hard (but also not so easy) to prove that
$
d=r
$
and that the set $\{E_0,E_1,\ldots,E_d\}$ of matrices $E_i$'s, which are defined as follows
\begin{equation}
\label{ci}
E_i=U_i\overline{U_i}^\top\quad(0\le i\le d),
\end{equation} 
is an another basis of $\M$. Moreover, we have
(ei) $E_iE_j=\delta_{ij}E_i$ $(0\le i,j\le d)$; 
(eii)\label{cl} $\ds{\sum_{i=0}^d E_i=I}$, the identity matrix;
(eiii) There exists a complex scalar $p_h(i)$ $(0\le i, h\le d)$ such that $A_hE_i=p_h(i)E_i$ (moreover, $p_h(i)$ is the eigenvalue of $A_h$ on the eigenspace $V_i$); 
(eiv) $A_h\in\Span\{E_0,E_1,\ldots,E_d\}$ $(0\le h\le d)$; 
(ev) $\ol{E_i}^\top=E_i$ $(0\le i\le d)$;
(evi) the idempotents $E_i$ are the orthogonal projectors of $V$ onto the spaces $V_i$.

The change-of-basis matrices $P$ and $Q$ are defined by
$$
A_i=\sum_{h=0}^d (P)_{hi} E_h,\qquad
E_i=\frac{1}{|X|}\sum_{h=0}^d (Q)_{hi}A_h.
$$
We shall refer to $P$ and $Q$ as the {\em first} and {\em second eigenmatrix} of the association scheme, respectively. Moreover, we set
\begin{equation}
\label{cs}
P=\left(
\begin{matrix}
p_0(0)&p_1(0)&p_2(0)&\cdots&p_d(0)\\
p_0(1)&p_1(1)&p_2(1)&\cdots&p_d(1)\\
p_0(2)&p_1(2)&p_2(2)&\cdots&p_d(2)\\
\vdots&\vdots&\vdots& ~ &\vdots\\
p_0(d)&p_1(d)&p_2(d)&\cdots&p_d(d)\\
\end{matrix}
\right)=
\left(
\begin{matrix}
-\!\!\!-\!\!\!-\!\!- & (P)_{0*} & -\!\!\!-\!\!\!-\!\!-\\
-\!\!\!-\!\!\!-\!\!- & (P)_{1*} & -\!\!\!-\!\!\!-\!\!-\\
-\!\!\!-\!\!\!-\!\!- & (P)_{2*} & -\!\!\!-\!\!\!-\!\!-\\
~& \vdots & ~\\
-\!\!\!-\!\!\!-\!\!- & (P)_{d*} & -\!\!\!-\!\!\!-\!\!-\\
\end{matrix}
\right).
\end{equation}

\begin{lemma}[{\cite{BI}}]
\label{ct}
With respect to the above notation, let $\{V_i\}_{i=0}^d$ denote a set of maximal common eigenspaces of $\{A_h\}_{h=0}^d$ and let $\jj$ denote the all-$1$ vector. The first eigenmatrix $P$ has the following form
$$
P = \begin{blockarray}{cccccc}
~ & R_0 & R_1 & R_2 & \cdots & R_d \\
\begin{block}{c(ccccc)}
V_0&1&n_1&n_2&\cdots&n_d\\
V_1&1&p_1(1)&p_2(1)&\cdots&p_d(1)\\
V_2&1&p_1(2)&p_2(2)&\cdots&p_d(2)\\
\vdots&\vdots&\vdots&\vdots& ~ &\vdots\\
V_d&1&p_1(d)&p_2(d)&\cdots&p_d(d)\\
\end{block}
\end{blockarray}
$$
where $n_i$ are positive integers. Moreover, $A_i\jj=n_i\jj$ $(0\le i\le d)$ and for any $i$ $(0\le i\le d)$, the scalars $n_i,p_i(1),\ldots,p_i(d)$ are the eigenvalues (not necessarily pairwise distinct) of $A_i$ on $V_0,V_1,\ldots,V_d$, respectively.
\end{lemma}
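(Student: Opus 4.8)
The plan is to read off the two distinguished lines of $P$ directly from the axioms and from the common-eigenspace decomposition already established, and only then to invoke the combinatorial meaning of the valencies for the integrality claim. Recall that, by the definition in \eqref{cs}, the $(h,i)$-entry of $P$ is $p_i(h)$, the eigenvalue of $A_i$ on the common eigenspace $V_h$. Thus the whole statement reduces to pinning down the column indexed by $R_0$ and the row indexed by $V_0$, since the remaining entries are unconstrained eigenvalues $p_i(h)$.

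First I would handle the column indexed by $R_0$. By axiom \ref{ce} we have $A_0=I$, and every common eigenspace $V_h$ consists of eigenvectors of $I$ with eigenvalue $1$; hence $p_0(h)=1$ for all $h$ $(0\le h\le d)$, which is exactly the first column of $P$. Next I would identify the row indexed by $V_0$. Equation \eqref{ck} gives $A_k\jj=n_k\jj$ for every $k$ $(0\le k\le d)$, so the all-$1$ vector $\jj$ is a common eigenvector of $A_0,A_1,\ldots,A_d$. Consequently $\jj$ lies in one of the maximal common eigenspaces of the decomposition $\{V_0,\ldots,V_d\}$; relabelling if necessary, I would take this to be $V_0$. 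Since every vector of a common eigenspace is an eigenvector of $A_i$ for a single eigenvalue, the eigenvalue of $A_i$ on $V_0$ must coincide with the one witnessed by $\jj$, namely $p_i(0)=n_i$. Together with $p_0(0)=1$ from the previous step, this yields the top row $(1,n_1,n_2,\ldots,n_d)$.

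It then remains to argue that each $n_i$ is a positive integer and to record the eigenvalue interpretation. For integrality and positivity, $n_i=p^0_{ii}$ is, by the combinatorial definition, the cardinality $|\{z\in X\mid (x,z)\in R_i\}|$ of a finite set, hence a non-negative integer; and since $R_i$ is nonempty there is a pair $(x,y)\in R_i$, forcing the (constant, by \eqref{ck}) out-degree $n_i$ to satisfy $n_i\ge 1$. The eigenvalue statement is then immediate from property (eiii) together with the definition of $P$: the $i$-th column of $P$ lists precisely the eigenvalues $n_i=p_i(0),p_i(1),\ldots,p_i(d)$ of $A_i$ on $V_0,V_1,\ldots,V_d$, respectively, and the assertion $A_i\jj=n_i\jj$ is just \eqref{ck} restated.

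The only point requiring care, rather than a genuine obstacle, is the justification that $\jj$ sits inside a single common eigenspace rather than being spread across several; this is exactly where the maximality of the eigenspace decomposition $\{V_i\}_{i=0}^d$ is used. Once $\jj$ has been located in one space, everything else is bookkeeping built on the already-available facts \ref{ce}, \eqref{ck}, and property (eiii). I do not expect Perron--Frobenius to be needed here, since the statement does not claim $\dim V_0=1$; it concerns only the eigenvalues carried by $V_0$, not its dimension.
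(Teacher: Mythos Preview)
Your argument is correct. The paper itself does not supply a proof of this lemma; it is simply stated with a citation to \cite{BI}. Your approach---reading off the $R_0$-column from $A_0=I$, locating $\jj$ inside a single maximal common eigenspace via \eqref{ck} to get the $V_0$-row, and extracting positivity of $n_i$ from the nonemptiness of $R_i$---is the standard one and goes through without issues. The point you flag about $\jj$ lying in a single $V_i$ is indeed the only subtlety, and you handle it correctly: writing $\jj=\sum_i v_i$ with $v_i\in V_i$ and applying each $A_h$ forces $(p_h(i)-n_h)v_i=0$ componentwise, so by maximality of the decomposition at most one $v_i$ survives.
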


The matrix $P$ is also called the {\em character table} of an association scheme, and in fact can be viewed as a natural generalization of the character table of a finite group (see, for example, \cite{BHS,CX,KH,XB}).

\begin{lemma}[{\cite[Proposition~3.4]{BI}}]
\label{cu}
With respect to the above notation, let $\XXi=(X,\R)$ denote a $d$-class association scheme and $V_i=E_iV$ $(0\le i\le d)$. Then the second eigenmatrix $Q$ has the following form
$$
Q = \begin{blockarray}{cccccc}
~ & V_0 & V_1 & V_2 & \cdots & V_d \\
\begin{block}{c(ccccc)}
R_0&1&m_1&m_2&\cdots&m_d\\
R_1&1&q_1(1)&q_2(1)&\cdots&q_d(1)\\
R_2&1&q_1(2)&q_2(2)&\cdots&q_d(2)\\
\vdots&\vdots&\vdots&\vdots& ~ &\vdots\\
R_d&1&q_1(d)&q_2(d)&\cdots&q_d(d)\\
\end{block}
\end{blockarray}
$$
where $m_i=\dim(V_i)$.
\end{lemma}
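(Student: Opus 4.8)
The plan is to read off the two distinguished parts of $Q$---the column indexed by $V_0$ and the row indexed by $R_0$---directly from the defining relation $E_i=\frac{1}{|X|}\sum_{h=0}^d (Q)_{hi}A_h$, using the projector description of the $E_i$ together with the trace. I will not need the full inverse relation $PQ=|X|I$; two short computations suffice, and the remaining entries $q_i(j)$ carry no claim.

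First I would identify the primitive idempotent $E_0$. Since $\jj$ is a common eigenvector of all the $A_h$ with $A_h\jj=n_h\jj$ by~\eqref{ck}, it lies in exactly one common eigenspace, which we label $V_0$. Any $v\in V_0$ then satisfies $Jv=(\sum_h A_h)v=(\sum_h n_h)v=|X|v$; as $J=\jj\jj^\top$ has rank one with its only nonzero eigenvalue $|X|$ attained precisely on $\Span\{\jj\}$, we get $V_0=\Span\{\jj\}$, hence $m_0=\dim V_0=1$ and $E_0=\jj\jj^\top/\|\jj\|^2=\frac{1}{|X|}J$. Comparing $E_0=\frac{1}{|X|}\sum_h (Q)_{h0}A_h$ with $\frac{1}{|X|}J=\frac{1}{|X|}\sum_h A_h$ (the latter by~\ref{cg}) and invoking the linear independence of $\{A_0,\dots,A_d\}$, I conclude $(Q)_{h0}=1$ for all $h$. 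This yields the all-ones $V_0$-column of $Q$.

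Next I would compute the $R_0$-row by taking traces. Because $E_i=U_i\overline{U_i}^\top$ is the orthogonal projector onto $V_i$, one has $\trace(E_i)=\dim V_i=m_i$. On the other hand, for $h\ne 0$ the relation $R_h$ is disjoint from the diagonal relation $R_0$, so $A_h$ has zero diagonal and $\trace(A_h)=0$, while $\trace(A_0)=\trace(I)=|X|$. Taking the trace of $E_i=\frac{1}{|X|}\sum_{h=0}^d (Q)_{hi}A_h$ therefore leaves only the $h=0$ term, giving $m_i=\trace(E_i)=\frac{1}{|X|}(Q)_{0i}\trace(A_0)=(Q)_{0i}$. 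Hence $(Q)_{0i}=m_i$ for every $i$, and in particular $(Q)_{00}=m_0=1$, which is precisely the claimed $R_0$-row $(1,m_1,\dots,m_d)$.

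The only genuinely delicate point is the identification $E_0=\frac{1}{|X|}J$ (equivalently $\dim V_0=1$); everything else is formal bookkeeping. I expect this to be the main obstacle, and the rank-one argument for $J$ above is the cleanest way around it---alternatively, one can invoke the maximality of the common eigenspaces to argue that the eigenvalue tuple $(1,n_1,\dots,n_d)$ distinguishing $V_0$ is realized on a single one-dimensional space.
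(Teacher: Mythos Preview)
Your argument is correct. The paper does not supply its own proof of this lemma; it merely cites \cite[Proposition~3.4]{BI}. Your write-up therefore stands on its own and fills in what the paper omits. The two computations you isolate---identifying $E_0=\frac{1}{|X|}J$ via the rank-one structure of $J$, and reading off $(Q)_{0i}=m_i$ from $\trace E_i=\dim V_i$ together with $\trace A_h=|X|\delta_{h0}$---are exactly the standard ones, and they suffice because the remaining entries $q_i(j)$ carry no assertion beyond being labels.

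One minor remark: in justifying that $\jj$ lies in a single $V_i$ (rather than straddling several), you implicitly use the maximality of the common eigenspaces---distinct $V_i$'s carry distinct eigenvalue tuples $(p_0(i),\dots,p_d(i))$, so a common eigenvector like $\jj$ with the fixed tuple $(1,n_1,\dots,n_d)$ can sit in only one of them. You mention this as an alternative at the end, but it is in fact already what makes your main argument go through; it would be cleaner to state it up front rather than as a fallback.
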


\begin{lemma}
\label{cp}
Let $P$ and $Q$ denote the first and second eigenmatrices of an association scheme $\XXi=(X,\R)$, respectively. Then,
$$
PQ=QP=|X|I.
$$
\end{lemma}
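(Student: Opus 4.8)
The plan is to read $P$ and $Q$ as the change-of-basis matrices between the two bases $\{A_h\}_{h=0}^d$ and $\{E_h\}_{h=0}^d$ of $\M$, and then to exploit the fact that such matrices must be mutually inverse up to the scalar $|X|$. The defining relations $A_i=\sum_{h=0}^d (P)_{hi}E_h$ and $E_i=\frac1{|X|}\sum_{h=0}^d(Q)_{hi}A_h$ say precisely that the $i$-th column of $P$ lists the coordinates of $A_i$ in the ordered basis $(E_0,\dots,E_d)$, while the $i$-th column of $\frac1{|X|}Q$ lists the coordinates of $E_i$ in the ordered basis $(A_0,\dots,A_d)$. So the entire argument amounts to substituting one relation into the other and comparing coefficients.

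First I would replace each $E_h$ appearing in $A_i=\sum_h(P)_{hi}E_h$ by $\frac1{|X|}\sum_k(Q)_{kh}A_k$, which yields $A_i=\frac1{|X|}\sum_k\big(\sum_h(Q)_{kh}(P)_{hi}\big)A_k$. Since the $01$-matrices $\{A_0,\dots,A_d\}$ are linearly independent (this is the content of \ref{cg}, by which they form a basis of $\M$), I may compare coefficients of $A_k$ on both sides of the identity $A_i=\sum_k\delta_{ki}A_k$ and read off $\frac1{|X|}\sum_h(Q)_{kh}(P)_{hi}=\delta_{ki}$, i.e. $\frac1{|X|}(QP)_{ki}=\delta_{ki}$. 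This is exactly $QP=|X|I$.

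Symmetrically, substituting $A_h=\sum_k(P)_{kh}E_k$ into $E_i=\frac1{|X|}\sum_h(Q)_{hi}A_h$ gives $E_i=\frac1{|X|}\sum_k\big(\sum_h(P)_{kh}(Q)_{hi}\big)E_k$, and the linear independence of the idempotents $\{E_0,\dots,E_d\}$ (a basis of $\M$ by construction; see the properties (ei)--(eiv)) lets me compare coefficients and conclude $\frac1{|X|}(PQ)_{ki}=\delta_{ki}$, that is $PQ=|X|I$.

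I do not expect any genuine obstacle: the statement is essentially the bookkeeping identity that two change-of-basis matrices are inverse to one another. The only points requiring care are keeping the index placement in $(P)_{hi}$ and $(Q)_{hi}$ straight through the double summation, and invoking linear independence of each of the two bases to justify the coefficient comparison. In fact, once either product has been computed, the other follows automatically, since for square matrices a one-sided inverse is automatically a two-sided inverse; strictly speaking only one of the two substitutions is logically necessary, and I would include the second only for symmetry and transparency.
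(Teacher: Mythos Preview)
Your proposal is correct and is precisely the argument the paper has in mind: the paper's own proof reads ``Immediate from the definitions of $P$ and $Q$,'' and your substitution-and-coefficient-comparison is exactly the unpacking of that sentence. Nothing needs to be changed.
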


\begin{proof}
Immediate from the definitions of $P$ and $Q$.
\end{proof}

\begin{corollary}
\label{cq}
Let $P$ denote the first eigenmatrix of an association scheme $\XXi=(X,\R)$. Then,
$$
P\jj=\left(\begin{matrix}|X|\\0\\\vdots\\0\end{matrix}\right),
$$
i.e., for every $i$ $(i\ne 0, 1\le i\le d)$ the sum of the entries of the $V_i$ row in $P$ is equal to $0$.
\end{corollary}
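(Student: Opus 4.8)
The plan is to deduce the identity directly from Lemma~\ref{cp} together with the explicit shape of the second eigenmatrix $Q$ given in Lemma~\ref{cu}. First I would record the one structural fact I need: by Lemma~\ref{cu}, the column of $Q$ indexed by $V_0$ is the all-$1$ column, so if $\boldsymbol e_0=(1,0,\ldots,0)^\top$ denotes the first standard basis vector then $Q\boldsymbol e_0=\jj$, the all-$1$ vector.

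Multiplying on the left by $P$ and invoking $PQ=|X|I$ from Lemma~\ref{cp} then gives
$$
P\jj=P(Q\boldsymbol e_0)=(PQ)\boldsymbol e_0=|X|\,I\boldsymbol e_0=|X|\,\boldsymbol e_0,
$$
which is exactly the claimed equality $P\jj=(|X|,0,\ldots,0)^\top$. Reading off the $V_i$-coordinate for $i\neq 0$ yields $\sum_{j=0}^d (P)_{ij}=\sum_{j=0}^d p_j(i)=0$, i.e. the entries of the $V_i$-row of $P$ sum to $0$, while the $V_0$-coordinate gives $\sum_{j=0}^d n_j=|X|$.

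To keep the argument self-contained (and as a check that the statement is forced purely by axioms~\ref{ce}--\ref{cf}), I would also sketch a direct proof from axiom~\ref{cg}, $\sum_{j=0}^d A_j=J$. The $V_i$-coordinate of $P\jj$ equals $\sum_{j=0}^d p_j(i)$, the eigenvalue of $\sum_j A_j=J$ on $V_i$. For $i=0$ one takes the common eigenvector $\jj\in V_0$ and uses $A_j\jj=n_j\jj$ and $J\jj=|X|\jj$ to get $\sum_j n_j=|X|$. For $i\neq 0$, the spaces $V_i$ and $V_0$ are orthogonal (the $E_i$ are mutually orthogonal projectors), so any $v\in V_i$ satisfies $\overline{\jj}^\top v=0$; since $\jj$ is real this is $\jj^\top v=0$, whence $Jv=\jj(\jj^\top v)=0$ and therefore $\sum_j p_j(i)=0$.

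There is no real obstacle: once the shape of $Q$ (equivalently, the orthogonality of each $V_i$ with $i\neq 0$ to $\jj$) is available, both computations are a single line. The only place deserving a moment of care is the complex conjugation in the Hermitian inner product used in the second route, which is harmless here because $\jj$ has real entries.
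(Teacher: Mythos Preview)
Your proposal is correct and takes essentially the same approach as the paper, which simply notes that the result is immediate from Lemmas~\ref{cu} and~\ref{cp}: the first column of $Q$ is $\jj$, so $P\jj=PQ\boldsymbol e_0=|X|\boldsymbol e_0$. Your alternative direct argument via $\sum_j A_j=J$ and orthogonality of the $V_i$ to $\jj$ is also correct and a nice self-contained check, though not present in the paper.
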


\begin{proof}
Immediate from Lemmas~\ref{cu} and \ref{cp}.
\end{proof}

An association scheme $(X,\S)$ on the same vertex set $X$ is called a {\em fusion} of $(X,\R)$ if each $S_i\in\S$ is the union of some of the $R_i$. Note that $R_0\in\S$. As an extreme case, we call $(X,\R)$ {\em amorphous} (or {\em amorphic}) if every ``merging'' operation on $\{R_0,R_1,\ldots,R_d\}$ yields a fusion (see \cite{vDM} for survey on this topic). As the adjacency matrices of a fusion of $(X,\R)$ must be $01$-linear combinations of the $A_i$, it is theoretically possible to find all fusions of $(X,\R)$ from the first eigenmatrix $P$. This is accomplished using the {\em Bannai--Muzychuk Criterion} {\cite[Lemma~1]{BEs}} (see e.g., \cite{BEs,TH,FT,ITd} for examples of explicit constructions of fusions, including $3$-class association schemes, that use this criterion). It is worth underlining that in Section~\ref{ra} we consider unions of two or more relations of a $3$-class association scheme as candidates to generate the whole scheme, and Proposition~\ref{do} provides a criterion, based on the entries of the first eigenmatrix $P$, to select \emph{generators} (recall that a matrix $A$ {\em generates} $\M$ if every element in $\M$ can be written as a polynomial in $A$). Even if the premises are similar, this is quite different from the Bannai--Muzychuk criterion: Bannai and Muzychuk look for unions of relations of a $d$-class scheme that can give rise to a $d'$-class subscheme with $d'<d$; while in our case we are interested in finding a union of relations of a scheme, i.e., a sum of adjacency matrices, say $A$, such that any other matrix of the Bose--Mesner of the scheme can be expressed as a polynomial in $A$. 




\subsection{On $\boldsymbol{2}$-class association schemes: strongly-regular graphs}
\label{SRG}

In order to better understand some arguments and results in Section \ref{ra}, it is convenient to recall what is widely known about strongly-regular graphs. In particular, we define a strongly-regular graph by using the language of association schemes, and deduce its particular combinatorial properties from this definition. Furthermore, we reprove a well-known result about strongly-regular graphs, namely, that the adjacency matrix $A$ of a connected strongly-regular graph generates its corresponding association scheme (see Proposition \ref{gen}). We refer the reader to \cite{BCN, BL, CL, Sei}  for further details on the general theory of strongly-regular graphs, and we also point out the more recent \cite{BH, BVM, GR}. Our main source for what follows is \cite{Cam}.

Let $\Gamma=(X,R)$ denote a graph with vertex set $X$ and edge set $R$. Define $\overline{R}=\{(x,y) \in X\times X \mid (x,y)\notin R\}$ and  $R_0=\{(x,x): x\in X\}$.
The graph $\Gamma$ is said to be \emph{strongly-regular} if exactly one of the following two properties holds:

\begin{enumerate}[label=\rm(\roman*)]
\item $(X, \{R_0,R\})$ is the trivial association scheme (i.e., $\G$ is a complete graph),
\item $(X, \{R_0,R,\overline{R}\})$ is a (symmetric) $2$-class association scheme.
\end{enumerate}


A \emph{clique} $C$ of an undirected graph $\Gamma$ is an induced subgraph of $\Gamma$ such that every two distinct vertices of $C$ are adjacent (i.e., a clique of $\Gamma$ is a complete subgraph of $\Gamma$). The number of vertices of $C$ is called the \emph{size} of the clique $C$.

\begin{theorem}[{\cite[Theorem 3.11]{Cam}}]
A disconnected strongly-regular graph is a disjoint union of cliques of the same size. Conversely, if a graph is a disjoint union of $t>1$ $(t \in \NN)$ cliques of the same size, then it is a disconnected strongly-regular graph.
\end{theorem}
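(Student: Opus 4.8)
The plan is to exploit the single intersection number $\mu:=p^2_{11}$, the number of common neighbours of two nonadjacent vertices, and to establish the chain of equivalences ``$\Gamma$ disconnected $\iff$ $\mu=0$ $\iff$ $\Gamma$ is a disjoint union of equal-size cliques''. The starting observation is that a \emph{disconnected} strongly-regular graph cannot satisfy alternative (i) of the definition, since a complete graph is connected; hence by hypothesis $(X,\{R_0,R,\overline{R}\})$ is a symmetric $2$-class association scheme. From \eqref{ck} the graph $(X,R)$ is then regular of valency $k=n_1$, and the intersection numbers $p^1_{11}=\lambda$ and $p^2_{11}=\mu$ are well defined and constant across all adjacent (respectively, nonadjacent) pairs.

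For the forward direction I would first show $\mu=0$. Because $\Gamma$ is disconnected, there exist vertices $x,z$ lying in distinct connected components; they are nonadjacent, and any common neighbour $w$ would place $x,w,z$ in one component, a contradiction. Thus this particular nonadjacent pair has no common neighbour, and since $\mu$ is constant we obtain $\mu=0$. Next I define the relation $x\approx y \Leftrightarrow (x=y \text{ or } x\sim y)$ and check that it is an equivalence relation: reflexivity and symmetry are immediate, and transitivity is exactly where $\mu=0$ enters, for if $x\sim y\sim z$ with $x\neq z$ and $x\not\sim z$, then $y$ would be a common neighbour of the nonadjacent pair $x,z$, contradicting $\mu=0$, so $x\sim z$. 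The classes of $\approx$ are therefore cliques, they carry all the edges of $\Gamma$ (so there are no edges between distinct classes), and $k$-regularity forces every class to have size $k+1$; hence $\Gamma$ is a disjoint union of cliques of the common size $k+1$, with $t=|X|/(k+1)>1$ since the graph is not connected.

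For the converse, let $\Gamma$ be a disjoint union of $t>1$ cliques each of size $m\ (\geq 2)$, so $|X|=tm$. It is disconnected because $t>1$, and it is clearly not complete, ruling out (i). It is regular of valency $k=m-1$; two adjacent vertices (same clique) share the remaining $m-2$ vertices of that clique, while two nonadjacent vertices (different cliques) share none, giving the constants $\lambda=m-2$ and $\mu=0$. To confirm that $(X,\{R_0,R,\overline{R}\})$ is a $2$-class scheme I would verify \ref{ce}--\ref{cf}; the only nonroutine axiom is \ref{cb}, which follows from
\begin{equation*}
A^2=(m-1)I+(m-2)A,
\end{equation*}
whence $\Span\{I,A,J\}$ is closed under multiplication and equals the Bose--Mesner algebra. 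Thus alternative (ii) holds and $\Gamma$ is a disconnected strongly-regular graph.

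I expect the main obstacle to be twofold: making the transitivity step airtight (this is the genuine content that converts ``$\mu=0$'' into the clique decomposition), and handling the degenerate cases — in particular clique size $m=1$ (the edgeless graph), where $R=\emptyset$ is not a legitimate relation and the statement must be read with $m\geq 2$, together with the need to check that $\overline{R}$ is nonempty (guaranteed by $t>1$) so that one genuinely obtains a $2$-class rather than a $1$-class scheme. A spectral alternative is available, namely that $\Gamma$ is disconnected if and only if the Perron--Frobenius eigenvalue $k$ of $A$ has algebraic multiplicity greater than $1$ (Theorem~\ref{gc}), but the combinatorial argument above is more self-contained and produces the clique structure directly.
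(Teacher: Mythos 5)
Your proof is correct. Note, however, that the paper itself offers no proof of this statement: it is quoted verbatim from Cameron (cited as \cite[Theorem 3.11]{Cam}), so there is no in-paper argument to compare yours against. What you give is the standard self-contained argument, and it is sound in both directions: the chain ``disconnected $\Rightarrow \mu=0 \Rightarrow$ the relation $x\approx y \Leftrightarrow (x=y$ or $x\sim y)$ is transitive, hence an equivalence whose classes are cliques carrying all edges, each of size $k+1$ by regularity'' is airtight, and in the converse the identity $A^2=(m-1)I+(m-2)A$ together with $AJ=JA=(m-1)J$ and $J^2=|X|J$ does verify closure of $\Span\{I,A,J\}$, which is the only non-routine axiom. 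Two of your side remarks are genuinely valuable relative to the paper's conventions: since the paper defines strong regularity via association schemes whose relations must be \emph{nonempty} subsets of $X\times X$, the converse really does require clique size $m\ge 2$ (the edgeless graph has $R=\emptyset$ and is not strongly-regular under this definition), and the hypothesis $t>1$ is exactly what guarantees $\overline{R}\neq\emptyset$ so that one lands in alternative (ii) of the definition rather than in a degenerate $1$-class situation. One small point worth making explicit if you write this up: in checking transitivity of $\approx$ you should separate the trivial cases ($x=y$, $y=z$, or $x=z$) before invoking $\mu=0$, exactly as your parenthetical ``with $x\neq z$ and $x\not\sim z$'' suggests; as phrased, the reduction is correct but compressed.
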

\begin{proposition}[{\cite[page 2]{BVM}}]\label{low}
Let $\Gamma$ denote a strongly-regular graph. If $\Gamma$ is a disjoint union of cliques, then $-1$ is an eigenvalue for $\Gamma$, and vice-versa.
\end{proposition}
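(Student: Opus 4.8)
The plan is to establish the two implications separately: the forward direction by a direct spectral computation, and the converse by a short case analysis resting on the eigenvalue structure of a $2$-class scheme. For the forward direction, suppose $\Gamma$ is a disjoint union of $t$ cliques, each necessarily of the same size $m$ by the theorem preceding this proposition. After a suitable ordering of the vertices, the adjacency matrix $A$ is block-diagonal with $t$ identical blocks, each equal to $J_m-I_m$ (here $J_m$ is the all-$1$ matrix and $I_m$ the identity of order $m$). The spectrum of $J_m-I_m$ consists of $m-1$ (with multiplicity one) and $-1$ (with multiplicity $m-1$); since the spectrum of a block-diagonal matrix is the union of the spectra of its blocks, we get $-1\in\spec(\Gamma)$ whenever $m\ge 2$, which proves this direction.

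For the converse, assume $-1\in\spec(\Gamma)$ and argue by cases following the definition of a strongly-regular graph. If $\Gamma$ is complete it is a single clique, and if $\Gamma$ is disconnected it is a disjoint union of cliques by the preceding theorem; in both cases we are done. It remains to rule out the case in which $(X,\{R_0,R,\overline{R}\})$ is a genuine $2$-class scheme with $\Gamma$ connected and non-complete. There, by the Perron--Frobenius Theorem~\ref{gc}, the valency $k$ is a simple eigenvalue, and $A=A_1$ has exactly two further eigenvalues $r>s$. To pin these down I would expand $A_1^2$ through the intersection numbers, writing $A_1^2=kA_0+\lambda A_1+\mu A_2$ with $A_2=J-I-A_1$; restricting to $\jj^\perp$ (where $J$ acts as $0$) this gives $A^2=(\lambda-\mu)A+(k-\mu)I$, so every eigenvalue $\theta\neq k$ satisfies $\theta^2-(\lambda-\mu)\theta-(k-\mu)=0$. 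Hence $r+s=\lambda-\mu$ and $rs=\mu-k$.

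Since $\mu\le k$ we have $rs=\mu-k\le 0$, so $r\ge 0\ge s$; as $k\ge 1$ and $r\ge 0$, neither $k$ nor $r$ equals $-1$, and therefore the hypothesis $-1\in\spec(\Gamma)$ forces $s=-1$. Substituting $s=-1$ into the two relations yields $r=k-\mu$ and $r=\lambda-\mu+1$, whence $k=\lambda+1$. The last step is to read this identity combinatorially: $k=\lambda+1$ says that any two adjacent vertices $x,y$ share $k-1$ common neighbors, so every neighbor of $x$ other than $y$ is also a neighbor of $y$; thus adjacent vertices have equal closed neighborhoods, the relation ``equal or adjacent'' is an equivalence relation, and its classes are cliques. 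This makes $\Gamma$ a disjoint union of cliques, contradicting that it is connected and non-complete. Consequently this last case cannot arise, and in every admissible case $\Gamma$ is a disjoint union of cliques. I expect the main obstacle to be the bookkeeping in this connected $2$-class case---justifying the three-eigenvalue structure and the relations $r+s=\lambda-\mu$, $rs=\mu-k$ cleanly from the intersection numbers, and then converting the arithmetic identity $k=\lambda+1$ into the clique decomposition; the remaining steps are routine.
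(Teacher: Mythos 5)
Your proof is correct, but there is nothing internal to compare it against: the paper does not prove Proposition~\ref{low} at all, it imports the statement verbatim from \cite[page 2]{BVM}. What you have done is supply the missing (standard) argument, and it works. The forward direction via the block-diagonal adjacency matrix and the spectrum of $J_m-I_m$ is routine; the converse correctly reduces, using the paper's scheme-theoretic definition of strong regularity and the preceding theorem on disconnected strongly-regular graphs, to the connected non-complete $2$-class case, where your identity $A^2=(\lambda-\mu)A+(k-\mu)I$ on $\jj^{\perp}$ is precisely the source of the paper's eigenvalue formula \eqref{rs}; Vieta then forces $s=-1$, hence $k=\lambda+1$, and your closed-neighborhood argument converts that identity into a clique decomposition, contradicting connectedness. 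Two small points to tighten. First, in the forward direction you need $m\ge 2$: this is automatic here, because a disjoint union of singleton cliques is the edgeless graph, whose adjacency relation $R$ is empty and therefore does not define an association scheme in the sense of Subsection~\ref{COM} (relations are required to be nonempty), so such a $\Gamma$ is not strongly-regular under the paper's definition. Second, you assert that $A$ has \emph{exactly} two further eigenvalues $r>s$, which you neither justify nor need; all your argument uses is that every eigenvalue other than $k$ (in particular $-1$, since $k\ge 1$) is a root of the quadratic $\theta^2-(\lambda-\mu)\theta-(k-\mu)=0$, whose roots $r\ge s$ satisfy $rs=\mu-k\le 0$, so that $-1$ must be the smaller root. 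With these repairs your proof is complete and self-contained, which is a genuine improvement over the bare citation in the paper.
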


Throughout this section, we assume that $\Gamma=(X,R_1)$ is a connected strongly-regular graph for which the corresponding association scheme $\XXi=(X,\{R_0,R_1,R_2\})$ has $2$ classes. Directly from our definition, it is not hard to obtain the following combinatorial properties: the graph $\Gamma$ has valency $p^0_{11}$, and the number of common neighbors of two vertices $x, y$ of $\Gamma$ is $p^1_{11}$ or $p^2_{11}$, depending on whether $x$ and $y$ are adjacent or not (see Proposition \ref{gen} below for more details). Moreover, $\Gamma$ has diameter $2$. By convention, the intersection numbers $p^0_{11}$, $p^1_{11}$ and $p^2_{11}$ of the scheme $\XXi$ are denoted by $k$, $\lambda$ and $\mu$, respectively. Furthermore, we use $k_2$ to refer to $p^0_{22}$, i.e., the number of vertices that are not adjacent to a given one; thus, the number of vertices $|X|$ is equal to
\begin{equation}\label{w}
w=1+k+k_2. 
\end{equation}
Counting in two different ways the edges between vertices which are adjacent and nonadjacent to a fixed $x\in X$, we get 
the well-known identity
\begin{equation}\label{count}
k(k-\lambda-1)=k_2\mu.
\end{equation}
The \emph{$i$th intersection matrix} $L_i$ of a $d$-class association scheme is defined to be a $(d+1)$-matrix whose generic entry is $(L_i)_{j,k}=p^k_{ij}$, for $i,j,k \in \{0,\ldots,d\}$. Following the monumental thesis of {\sc Delsarte} \cite{DP}, after {\emph{diagonalizing} both sides of the equation in \ref{cb}, we deduce that $PL_iP^{-1}={\rm diag}(p_i(0), p_i(1), \ldots, p_i(d))$ (see \cite[page 13] {DP}). Consequently, the matrices $A_i$ and $L_i$ have the same eigenvalues (but with different multiplicities), and it follows that the map $A_i \rightarrow L_i$ defines an isomorphism between the Bose--Mesner algebra of the scheme and the algebra generated by the $L_i$'s. Thus, following \cite[pages 76, 77]{Cam}, the eigenvalues of $\Gamma$ are the zeros of the minimal polynomial of the matrix $L_1$ of the $2$-class association scheme $\XXi$. In particular, the valency $k$ is an eigenvalue with multiplicity $1$, and the other eigenvalues are 
\begin{equation}\label{rs}
r,s=\frac{(\lambda-\mu)\pm\sqrt{(\lambda-\mu)^2+4(k-\mu)}}{2}.
\end{equation}
The first eigenmatrix $P$ of the $2$-class scheme corresponding to $\Gamma$ is
\[P= \,
\begin{blockarray}{ccc}
R_0 & R_1& R_2\\
\begin{block}{(ccc)}
1& k& -1 - k + w\\
1& r& -1 - r\\
1&s&-1 - s\\
\end{block}
\end{blockarray}\ , 
\]
The multiplicities $f,g$ of eigenvalues $r$ and $s$ can be computed by solving the following equations:
\begin{equation}\label{first}
f+g=k+k_2 
\end{equation}
and
\begin{equation}\label{second}
k+fr+gs=\rm{trace}(A_1)=0
\end{equation}
(see, for example, \cite[page~77]{Cam}). Equation (\ref{first}) is due to  $PQ=wI$ (see Lemma \ref{cp}), which yields that the sum of the multiplicities of the scheme $\XXi$ is equal to $w$. Equation (\ref{second}) deserves several considerations. First, it is known that the trace of a matrix is invariant under diagonalization \cite[Chapter 1]{HJ}, and $A_1$ ($A_2$ as well) is diagonalizable (see Subsection \ref{COM}). This implies that  \rm{trace}($A_1$) is equal to the sum of the eigenvalues of $\Gamma$, counted with their multiplicities. On the other hand, since $\sum^2_{i=0}A_i=J$ (see axiom \ref{ce}), the diagonal entries of $A_1$ (those of $A_2$ as well) are zero. Now, equation (\ref{second}) follows.

\begin{remark}
\label{r_0}
{\rm{We show that $k=\mu$ if and only if $r=0$. First assume that $k=\mu$. Equation~\eqref{rs} yields $r=\frac{\lambda-\mu+\sqrt{(\lambda-k)^2}}{2}=\frac{\lambda-\mu+|\lambda-k|}{2}$, and with it $r=0$. Now assume that $r=0$. From equation~\eqref{rs} we now have $(\lambda-\mu)+\sqrt{(\lambda-\mu)^2+4(k-\mu)}=0$ which implies $k=\mu$. The claim follows.

Furthermore, from the first eigenmatrix $P$} it follows that $\Gamma=(X,R_1)$ has eigenvalue $r=0$ if and only if $-1$ is an eigenvalue for $\overline{\Gamma}=(X,R_2)$, i.e., $\overline{\Gamma}$ (which is also strongly-regular) is a disjoint union of cliques (by Proposition \ref{low}). Since $\overline{\Gamma}$ is disjoint union of cliques, $\Gamma$ is a complete multipartite graph (by construction). Now, for the case $k=\mu$, using \eqref{w}, \eqref{count}, and \eqref{rs}, we find 
\begin{equation}\label{r_zero}
w=2 k - \lambda, \ \ \ \ k_2=k - \lambda - 1, \ \ \ \ r=0, \ \ \ \ s = \lambda - k ~(<-1).
\end{equation}}
\end{remark}

In Remark~\ref{r_0} we considered the case $k=\mu$. For the rest of the current section assume $k\ne\mu$. By combining (\ref{rs}), (\ref{first}) and (\ref{second}), we get

\begin{equation}\label{multi}
f,g=\frac{1}{2}\left((k+k_2)\pm\frac{(k+k_2)(\mu-\lambda)-2k}{\sqrt{(\lambda-\mu)^2+4(k-\mu)}}\right)
\end{equation}

These numbers must be positive integers. We distinguish two cases in (\ref{multi}), so yielding two classes of strongly-regular graphs.

{\sc Case 1:}\label{case1} $(k+k_2)(\mu-\lambda)-2k=0$. Here, we find $k_2=k$ (since $k+k_2>k$ and $k+k_2$ divides $2k$). It follows that $\lambda=\mu-1$ and $k=f=g$. Moreover, \eqref{count} yields $k=2\mu$. Also, since $w=1+2k=1+4\mu$, we get
\begin{equation}\label{rs0}
r,s=\frac{-1\pm\sqrt{w}}{2},
\end{equation} 
with $r>0$ and $s<0$. Strongly-regular graphs with these parameters are known as \emph{conference graphs}. See \cite{BCN, Jon, WSW} or \cite[page~77]{Cam} for more details.

\medskip

{\sc Case 2:}\label{case2} $(k+k_2)(\mu-\lambda)-2k\neq0$. Now, following explanations in \cite[page~77]{Cam}, we deduce that $r$ and $s$ are integers, with $r>0$ and $s<0$ still holding, and use the eigenvalues $k,r,s$ to write the parameters of $\Gamma$: 

\begin{equation*}
\lambda=k+r+s+rs, \ \ \ \ \mu=k+rs, \ \ \ \ k_2=-\frac{k(r+1)(s+1)}{k+rs},
\end{equation*}
from which
\begin{equation}\label{para}
w=\frac{(k - r) (k - s)}{k + r s}.
\end{equation}

Therefore, with reference to \eqref{rs}, for a connected strongly-regular, if $k\ne\mu$ we can always assume that
\begin{equation}\label{rs1}
r>0 \hbox{ \, and \, } s<0 \hbox{ \, ($s\neq - 1$)}.
\end{equation}


We conclude the subsection by providing a well-known result, which is re-proved here, to prepare the reader for what is the spirit that runs through the upcoming sections.

\begin{proposition}
\label{gen}
Let $\Gamma$ denote a connected strongly-regular graph. Then, the following hold.
\begin{enumerate}[label=\rm(\roman*)]
\item $\Gamma$ is regular with valency $k$. Moreover, there exists a positive integer $\lambda$ such that any two adjacent vertices have $\lambda$ common neighbors. In addition, if $\Gamma$ is not a complete graph, there exists a positive integer $\mu$ such that the number of common neighbors of any two nonadjacent vertices is equal to $\mu$.
\item  If $\M$ is the Bose--Mesner algebra of the association scheme of $\Gamma$ and $A$ is the adjacency matrix of $\Gamma$, then $\M=(\langle A\rangle, +, \cdot)$.
\end{enumerate}

\end{proposition}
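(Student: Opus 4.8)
The plan is to first read the combinatorial parameters straight off the intersection numbers of the $2$-class scheme $\XXi=(X,\{R_0,R_1,R_2\})$ attached to $\Gamma=(X,R_1)$, and then, for the generation statement, to exhibit the remaining basis matrix $A_2$ explicitly as a polynomial in $A=A_1$.

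For part (i), regularity is immediate from \eqref{ck}: since $A_1\jj=n_1\jj$ with $n_1=p^0_{11}$, every row sum of $A_1$ equals $k:=p^0_{11}$, so $\Gamma$ is $k$-regular. For the count of common neighbors I would use the combinatorial definition (the scalar in \eqref{cv}) together with the symmetry of the scheme. If $x,y$ are adjacent, i.e. $(x,y)\in R_1$, then the number of common neighbors is $|\{z\mid (x,z)\in R_1,\ (y,z)\in R_1\}|$; since $R_1=R_1^\top$ this equals $|\{z\mid (x,z)\in R_1,\ (z,y)\in R_1\}|=p^1_{11}=:\lambda$, which by \eqref{cv} is independent of the chosen adjacent pair. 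When $\Gamma$ is not complete we have $R_2\ne\emptyset$, and the identical argument applied to a nonadjacent pair $(x,y)\in R_2$ produces the constant $\mu:=p^2_{11}$.

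For part (ii), first note that $\langle A\rangle\subseteq\M$, since $A=A_1\in\M$ and $\M$ is an algebra containing $I=A_0$; it therefore suffices to prove the reverse inclusion. If $\Gamma$ is complete, then $\XXi$ is the trivial scheme, $\M=\Span\{I,J\}=\Span\{I,A\}$ (because $A=J-I$), and both generators lie in $\langle A\rangle$, so $\M=\langle A\rangle$. Assume now that $\Gamma$ is not complete, so that $\dim\M=3$ with basis $\{I,A,A_2\}$. Expanding $A^2$ by means of axiom~\ref{cb} gives
$$
A^2=A_1^2=\sum_{h=0}^2 p^h_{11}A_h=kI+\lambda A+\mu A_2 .
$$
Hence $A_2$ becomes a polynomial in $A$ as soon as $\mu\ne0$: indeed $A_2=\mu^{-1}\bigl(A^2-kI-\lambda A\bigr)\in\langle A\rangle$, so that $\M=\Span\{I,A,A_2\}\subseteq\langle A\rangle$, and equality follows.

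The crux of the argument is thus the claim $\mu\ne0$, and this is exactly where connectedness and non-completeness enter. I would argue by contradiction: if $\mu=0$, then no two nonadjacent vertices share a common neighbor, so whenever $x\sim y$ and $y\sim z$ with $x\ne z$ we must have $x\sim z$ (otherwise $y$ would be a common neighbor of the nonadjacent pair $x,z$). Hence adjacency is transitive, every connected component of $\Gamma$ is a clique, and connectedness forces $\Gamma$ to be complete, contrary to assumption. Alternatively I could invoke Proposition~\ref{low}: $\mu=0$ makes $\Gamma$ a disjoint union of cliques, which for a connected graph is a single clique. Either way $\mu>0$, and the proof is complete. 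I expect the only genuinely delicate point to be this positivity of $\mu$; everything else is routine bookkeeping with the intersection numbers.
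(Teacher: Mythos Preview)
Your proof is correct. Part~(i) is essentially identical to the paper's argument: both read off $k,\lambda,\mu$ as the intersection numbers $p^0_{11},p^1_{11},p^2_{11}$ of the associated scheme.

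For part~(ii) you take a genuinely different route. The paper writes $A_2=J-I-A$ and then quotes the Hoffman-type result that $J$ is a polynomial in $A$ for any connected regular graph (their reference \cite{Haj}); connectedness enters through that black box. You instead expand $A^2=kI+\lambda A+\mu A_2$ via axiom~\ref{cb} and solve for $A_2$, so the whole burden falls on showing $\mu\ne 0$; you then discharge that directly by the ``adjacency becomes transitive'' argument, which is where connectedness enters for you. Your approach is more self-contained (no external Hoffman polynomial needed) and makes the role of connectedness completely explicit, at the cost of an extra paragraph on $\mu>0$; the paper's approach is shorter once one is willing to cite the $J$-as-polynomial result. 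Both are standard and perfectly fine. One small remark: your alternative appeal to Proposition~\ref{low} is not quite a drop-in replacement, since that proposition is phrased in terms of the eigenvalue $-1$ rather than $\mu=0$; your direct transitivity argument is the clean way to finish.
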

\begin{proof}
(i) Let $\Gamma=(X,R)$. According to our definition, there is a 1 or 2-class scheme, with $R_1=R$, associated with $\Gamma$. Recall that the intersection numbers $p^k_{ij}$ of the scheme depend only on the indices $k, i$, and $j$. Pick a vertex $x\in X$. 
The number of neighbors of $x$ is $|\Gamma_1(x)|=|\{z\in X \mid (x,z)\in R_1\}|$, which is equal to $p^0_{11}=n_1$. 
Now, pick two adjacent vertices of $\Gamma$, i.e., $(x,y)\in R_1$. Then, the number of common neighbors of $x$ and $y$ is $|\Gamma_1(x) \cap \Gamma_1(y)|=|\{z\in X \mid (x,z)\in R_1 \hbox{ and } (z,y)\in R_1\}|$, i.e, $p^1_{11}$. 
This completes the proof in case $\Gamma$ is a complete graph. Then, assume that $\Gamma$ is a connected strongly-regular graph for which the corresponding association scheme has 2 classes, i.e., $R_1=R$ and $R_2=\overline{R}$. Only the constant $\mu$ remains to be computed. Pick two nonadjacent vertices of $\Gamma$, i.e., $(x,y)\in R_2$. Then, the number of common neighbors of $x$ and $y$ is equal to $|\Gamma_1(x) \cap \Gamma_1(y)|=|\{z\in X \mid (x,z)\in R_1 \hbox{ and } (z,y)\in R_1\}|$, which is $p^2_{11}$.

(ii) Suppose that $\Gamma=(X,R)$ is not complete (otherwise $A=J-I$, from which the result trivially follows),  and let  $\XXi=(X, \{R_0,R,\overline{R}\})$ be its corresponding $2$-class association scheme with $\M=\langle I, A, \overline{A}\rangle$. Then, it suffices to show that $\overline{A}$ is a polynomial in $A$. By definition, $\overline{A}=J-I-A$, where $J$ can be written as a polynomial in $A$ in virtue of  \cite[Theorem~1]{Haj}. This completes the proof.
\end{proof}

\section{Some algebraic properties of $\boldsymbol{\M}$}
\label{da}

In this section we prove some results that can be found implicitly (or explicitly) in the literature. Without this section our paper is not readable as we want it to be.

Let $\M$ denote the Bose--Mesner algebra of a commutative $d$-class association scheme. In the next few claims we survey basic algebraic properties under which a matrix $A\in\M$ (not necessarily a $01$-matrix) generates $\M$. Main results that we use latter in the paper are Lemmas~\ref{dk}, \ref{ch} and Proposition~\ref{do}. Note that if $A$ is a $01$-matrix then $A$ is an adjacency matrix of some (directed) graph $\G$; we study the combinatorial structure of such a graph in Sections~\ref{pa} and \ref{6G}. Lemma~\ref{dk} can be implicitly found in some undergraduate textbook.

\begin{lemma}
\label{dk}
Let $\M$ denote the Bose--Mesner algebra of a commutative $d$-class association scheme. If $A\in\M$ has $d+1$ distinct eigenvalues, then $\{A^0,A^1,\ldots,A^d\}$ is a linearly independent set. Moreover, $\M=(\langle A\rangle, +, \cdot)$.
\end{lemma}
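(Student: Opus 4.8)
The plan is to exploit that $A$, being an element of $\M$, is normal and therefore diagonalizable, and then to turn a hypothetical linear dependence among the powers of $A$ into a Vandermonde system. First I would note that by Lemma~\ref{cc} every matrix in $\M$ is normal, so Theorem~\ref{gd} applies to $A$ and gives an invertible $S$ with $S^{-1}AS=D$ diagonal. Writing $\lambda_0,\lambda_1,\ldots,\lambda_d$ for the $d+1$ pairwise distinct eigenvalues of $A$, each $\lambda_i$ appears among the diagonal entries of $D$.

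For the linear independence of $\{A^0,A^1,\ldots,A^d\}$, I would assume a relation $\sum_{j=0}^d c_j A^j=0$ and conjugate it by $S$ to obtain $\sum_{j=0}^d c_j D^j=0$. Reading off the diagonal position associated with each eigenvalue then yields $\sum_{j=0}^d c_j\lambda_i^{\,j}=0$ for $0\le i\le d$. This is a homogeneous linear system in the unknowns $(c_0,\ldots,c_d)$ whose coefficient matrix is the $(d+1)\times(d+1)$ Vandermonde matrix of the distinct scalars $\lambda_0,\ldots,\lambda_d$; its determinant $\prod_{0\le i<k\le d}(\lambda_k-\lambda_i)$ is nonzero, forcing $c_0=\cdots=c_d=0$ and hence the claimed independence.

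Finally, to conclude $\M=(\langle A\rangle,+,\cdot)$, I would use that $A\in\M$ and that $\M$ is an algebra containing $A_0=I$ and closed under multiplication, so every power $A^j$ lies in $\M$ and therefore $\langle A\rangle\subseteq\M$. The independence just proved gives $\dim\langle A\rangle\ge d+1$, while $\{A_0,\ldots,A_d\}$ being a basis of $\M$ gives $\dim\M=d+1$; the inclusion together with equality of dimensions forces $\langle A\rangle=\M$. I do not expect a serious obstacle here: the one point needing care is that the hypothesis ``$d+1$ distinct eigenvalues'' yields linear independence of the $d+1$ powers only because $A$ is diagonalizable, which is precisely what normality of the members of $\M$ supplies; the remainder is the nonvanishing of a Vandermonde determinant together with a dimension count.
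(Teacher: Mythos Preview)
Your proof is correct and follows essentially the same route as the paper: both arguments reduce the claim to the invertibility of the Vandermonde matrix built from the $d+1$ distinct eigenvalues of $A$, and both conclude $\langle A\rangle=\M$ by comparing with the $(d+1)$-dimensional algebra $\M$. The only cosmetic difference is that the paper writes $A^\ell=\sum_i\lambda_i^\ell E_i$ in terms of the primitive idempotents $E_0,\ldots,E_d$ and inverts the Vandermonde system to obtain each $E_i$ explicitly as a polynomial in $A$, whereas you diagonalize via an arbitrary $S$, argue linear independence by contradiction, and finish with a dimension count; these are equivalent packagings of the same idea.
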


\begin{proof}
We show that $E_i\in\Span\{A^0,A^1,\ldots,A^d\}$ $(0\le i\le d)$. Let $\{\lambda_0,\lambda_1,\ldots,\lambda_d\}$ denote the set of the distinct eigenvalues of $A$. Using the notation from Subsection~\ref{COM}, we can permute the index set of $\lambda_i$'s and get $AU_i=\lambda_i U_i$. By the property (eii) (see page~\pageref{cl}), this yields
\begin{align*}
A&= A\sum_{i=0}^d E_i\\
&= \sum_{i=0}^d AU_i\ol{U_i}^\top\\
&=\lambda_0 E_0 + \lambda_1 E_1 + \cdots + \lambda_d E_d.
\end{align*}
We can now get the following system 
$$
A^\ell=\lambda_0^\ell E_0 + \lambda_1^\ell E_1 + \cdots +\lambda_d^\ell E_d
\qquad(0\le \ell\le d).  
$$
Note that the above system can be written as
$$
\left [
\begin{matrix} 
I\\ A\\ A^2\\ \vdots\\ A^d
\end{matrix}
\right]=
{\underbrace{
\left[
\begin{matrix} 
1 & 1 & ... & 1\\
\lambda_0 & \lambda_1 & ... & \lambda_d\\
\lambda_0^2 & \lambda_1^2 & ... & \lambda_d^2\\
\vdots & \vdots & \, & \vdots \\
\lambda_0^d & \lambda_1^d & ... & \lambda_d^d\\
\end{matrix}
\right]}_{=B^\top}}
\left[
\begin{matrix} 
E_0\\ E_1\\ E_2\\ \vdots\\ E_d
\end{matrix}
\right].
$$
where $B$ is a Vandermonde matrix (see, for example, \cite[page 185]{MCm}) which is invertible. The result follows.
\end{proof}

The Corollary~\ref{dm} is an elementary result in spectral graph theory.

\begin{corollary}
\label{dm}
Let $\M$ denote the Bose--Mesner algebra of a commutative $d$-class association scheme. For an arbitrary $A\in\M$ the following hold.  
\begin{enumerate}[label=\rm(\roman*)]
\item If $A$ is a symmetric $01$-matrix with $d+1$ distinct eigenvalues, then $A$ is an adjacency matrix of a \underline{connected} undirected graph $\G$.
\item If $A$ is a non-symmetric $01$-matrix with $d+1$ distinct eigenvalues, then $A$ is an adjacency matrix of a \underline{strongly connected} directed graph $\G$.
\end{enumerate}
\end{corollary}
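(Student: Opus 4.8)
The plan is to deduce connectivity directly from the fact, supplied by Lemma~\ref{dk}, that $A$ generates $\M$. First I would observe that since $A$ is a $01$-matrix it is, by definition, the adjacency matrix of a (directed) graph $\G=\G(A)$; this graph is an undirected graph exactly when $A$ is symmetric (case (i)), and a genuinely directed graph when $A$ is non-symmetric (case (ii)). Thus the symmetric/non-symmetric dichotomy of $A$ matches the undirected/directed dichotomy of $\G$, and the only substantive claim to establish is that $\G$ is connected, respectively strongly connected.

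The key observation is that $J\in\M$ by axiom~\ref{cg}, while Lemma~\ref{dk} guarantees that, since $A$ has $d+1$ distinct eigenvalues, $\M=(\langle A\rangle,+,\cdot)$. Hence $J$ can be written as a polynomial in $A$, say $J=\sum_{\ell=0}^{d}c_\ell A^\ell$. Because $A$ is a nonnegative matrix, every power $A^\ell$ has nonnegative entries, so for any ordered pair $(x,y)$ the identity $1=(J)_{xy}=\sum_{\ell=0}^{d}c_\ell (A^\ell)_{xy}$ cannot hold with all the entries $(A^\ell)_{xy}$ equal to zero. Therefore there exists some $\ell$ (with $0\le\ell\le d$) for which $(A^\ell)_{xy}>0$.

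By Lemma~\ref{hB}, a positive $(x,y)$-entry of $A^\ell$ means precisely that there is a (directed) walk of length $\ell$ from $x$ to $y$ in $\G$. Since the pair $(x,y)$ was arbitrary, there is a walk from $x$ to $y$ for every pair of vertices, which is exactly the definition of $\G$ being strongly connected in the directed case (ii), and of $\G$ being connected in the undirected case (i). This settles both parts simultaneously. (Equivalently, one could phrase the same reasoning through Lemma~\ref{gb}: the existence of such a polynomial expression for $J$ forces $A$ to be irreducible, hence $\G$ strongly connected.)

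I do not expect a genuine obstacle here: the content is essentially the remark that generating $\M$ forces $J$, and with it positive reachability between all pairs of vertices, to be expressible through the first $d+1$ powers of $A$. The only points requiring care are purely bookkeeping, namely matching the symmetric/non-symmetric alternative for $A$ to the undirected/directed alternative for $\G$, and invoking the walk-counting interpretation of $A^\ell$, which relies on $A$ being a $01$-matrix (a simple graph) exactly as assumed in the hypothesis.
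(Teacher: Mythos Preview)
Your proof is correct and follows essentially the same route as the paper: both use Lemma~\ref{dk} to get $J\in\Span\{A^0,\ldots,A^d\}$, deduce that for every pair $(x,y)$ some $(A^\ell)_{xy}$ is nonzero, and invoke the walk-counting interpretation of $A^\ell$ to conclude (strong) connectivity. The only cosmetic difference is that the paper phrases the key step as ``otherwise $J\notin\M$, a contradiction'' without mentioning nonnegativity; indeed your appeal to nonnegativity is harmless but unnecessary, since $\sum_\ell c_\ell(A^\ell)_{xy}=1$ already fails whenever all the $(A^\ell)_{xy}$ vanish, regardless of signs.
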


\begin{proof}
From Lemma~\ref{dk}, $\{A^0,A^1,\ldots,A^d\}$ is a basis of $\M$ (recall $\M=\Span\{A_0,A_1,\ldots,A_d\}$). Since $\sum_{i=0}^d A_i=J$, the all-$1$ matrix $J$ belongs to $\M$, which yields $J\in\Span\{A^0,A^1,\ldots,A^d\}$. In other words, for any choice of vertices $y,z\in X$ there exists $\ell$ $(0\le \ell\le d)$ such that $(A^\ell)_{yz}\ne 0$ (otherwise $J\not\in\M$, a contradiction). Recall that the $(y,z)$ entry of $A^\ell$ represents the number of walks of length $\ell$ between $y$ and $z$. The result follows.
\end{proof}

\begin{lemma}
\label{dl}
Let $\M$ denote the Bose--Mesner algebra of a commutative $d$-class association scheme and let $A$ denote an arbitrary matrix from $\M$. Then the following hold.
\begin{enumerate}[label=\rm(\roman*)]
\item The sum of the row entries of $A$ is the same for every vertex.
\item The sum of the column entries of $A$ is the same for every vertex.
\item The sum of the row entries of $A$ is equal to the sum of the column entries of $A$ for every vertex.
\end{enumerate}
\end{lemma}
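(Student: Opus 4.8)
The plan is to exploit the fact that the all-ones matrix $J$ lies in $\M$, together with the commutativity of $\M$. First I would recall that by axiom~\ref{cg} we have $\sum_{i=0}^d A_i = J$, so $J\in\M$; and since $A\in\M$ and $\M$ is a commutative algebra (axiom~\ref{cf}), the matrices $A$ and $J$ commute, i.e. $AJ=JA$.

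Next I would read off the two sides of this identity entrywise. For any $w,z\in X$,
$$
(AJ)_{wz}=\sum_{y\in X}(A)_{wy}(J)_{yz}=\sum_{y\in X}(A)_{wy},
$$
which is exactly the sum of the entries of the $w$-th row of $A$; call it $r_w$. Symmetrically,
$$
(JA)_{wz}=\sum_{y\in X}(J)_{wy}(A)_{yz}=\sum_{y\in X}(A)_{yz},
$$
which is the sum of the entries of the $z$-th column of $A$; call it $c_z$. Thus the scalar equation $AJ=JA$ says precisely that $r_w=c_z$ for all $w,z\in X$.

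From this single relation all three claims follow at once. Fixing $w$ and letting $z$ vary shows that $c_z$ is independent of $z$, which is~(ii); fixing $z$ and letting $w$ vary shows that $r_w$ is independent of $w$, which is~(i); and taking $z=w$ gives $r_w=c_w$ for every vertex, which is~(iii).

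There is no real obstacle here: the lemma is routine once one observes that $J\in\M$ and that $A$ commutes with $J$, so the only ``idea'' is to package the three statements into the commutation $AJ=JA$. An equivalent but more pedestrian route would use that each $A_i\jj=n_i\jj$ and that the valency of a relation equals that of its transpose, so that, writing $A=\sum_i c_i A_i$, both $A\jj$ and $A^\top\jj$ equal $\big(\sum_i c_i n_i\big)\jj$; this also yields all three parts, but it requires first establishing $n_i=n_{i'}$ for $R_{i'}=R_i^\top$, whereas the $AJ=JA$ argument avoids that entirely.
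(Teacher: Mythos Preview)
Your proof is correct, and it takes a genuinely different route from the paper. The paper proves the lemma via the primitive idempotents: it writes each adjacency matrix as $A_i=\sum_{h=0}^d \beta_h E_h$, uses $E_0\jj=\jj$ and $E_h\jj=\0$ for $h\ge 1$ to get $A_i\jj=\beta_0\jj$, and then invokes $\ol{E_h}^\top=E_h$ together with the fact that $A_i$ is a $01$-matrix to obtain $A_i^\top\jj=\ol{\beta_0}\jj=\beta_0\jj$; linearity then passes to arbitrary $A\in\M$. Your argument bypasses the spectral machinery entirely, using only axioms \ref{cg} and \ref{cf} to get $AJ=JA$ and reading the claim off entrywise. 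What your approach buys is economy: it needs nothing from Subsection~\ref{COM} beyond the defining axioms. What the paper's approach buys is that it identifies the common row/column sum of $A_i$ explicitly as the valency $n_i=p_i(0)$, tying the lemma to the first eigenmatrix~$P$; your $AJ=JA$ argument gives the equality cleanly but does not name the common value.
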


\begin{proof}
Consider the basis (of the adjacency matrices) $\{A_0,A_1,\ldots,A_d\}$ of $\M$, and pick some $A_i$. There exist complex scalars $\beta_h$ $(0\le h\le d)$ such that
$$
A_i=\beta_0 E_0 + \beta_1 E_1 + \cdots + \beta_d E_d
$$
(with the notation of Lemma~\ref{ct}, $\beta_0=n_i$ and $\beta_h=p_i(h)$ $(1\le h\le d)$). Since $E_0\jj=\jj$ and $E_i\jj=\0$ for all $1\le i\le d$, this yields $A_i\jj=\beta_0\jj$ (the sum of row entries of $A_i$ is equal to $\beta_0$, and that $\beta_0$ is a positive integer). Since $A$ is a linear combination of the $A_i$'s,  claim (i) follows.

Since each $A_i$ is a $01$-matrix, by Lemma~\ref{cl}(v) we have
$$
A_i^\top=\ol{A_i}^\top=\ol{\beta_0} E_0 + \ol{\beta_1} E_1 + \cdots + \ol{\beta_d} E_d,
$$
which yields $A_i^\top\jj=\ol{\beta_0}\jj$. Note that $\ol{\beta_0}=\beta_0$ is a positive integer. The sum of the column entries of $A_i$ is also equal to $\beta_0$. Since $A$ is a linear combination of $A_i$'s, claims (ii) and (iii) follow.
\end{proof}

Lemma~\ref{dl} also follows immediately from \eqref{ck}. Corollary~\ref{dn} is actually hidden in Biggs's book \cite{Bna}.

\begin{corollary}
\label{dn}
Let $\M$ denote the Bose--Mesner algebra of a commutative $d$-class association scheme. For an arbitrary $A\in\M$ the following hold.   
\begin{enumerate}[label=\rm(\roman*)]
\item If $A$ is a non-symmetric $01$-matrix, then there exists a polynomial $H(t)\in\RR[t]$ such that $J=H(A)$ if and only if $\G=\G(A)$ is a strongly connected directed graph.
\item If $A$ is a symmetric $01$-matrix, then there exists a polynomial $H(t)\in\RR[t]$ such that $J=H(A)$ if and only if $\G=\G(A)$ is a connected undirected graph.
\end{enumerate}
\end{corollary}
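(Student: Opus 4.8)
The plan is to prove each of the two biconditionals by splitting it into its two implications. Parts (i) and (ii) can be treated by one and the same argument: for a symmetric $01$-matrix the graph $\G(A)$ is undirected and its strong connectivity coincides with connectivity, while the Perron--Frobenius machinery used below applies verbatim to the irreducible nonnegative matrix $A$ in either case. The ``only if'' implication is elementary and rests on walk-counting; the ``if'' implication carries the content and proceeds by exhibiting the principal idempotent as a polynomial in $A$.

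First I would dispose of the ``only if'' direction. Suppose $J=H(A)$ with $H(t)=\sum_k h_k t^k$, and fix vertices $y,z\in X$. Since $A$ is a $01$-matrix, Lemma~\ref{hB} says that $(A^k)_{yz}$ is the nonnegative number of walks of length $k$ from $y$ to $z$, so that
$$
1=(J)_{yz}=(H(A))_{yz}=\sum_k h_k\,(A^k)_{yz}\neq 0
$$
forces $(A^k)_{yz}\neq 0$ for at least one $k$, i.e.\ a walk from $y$ to $z$ exists. As $y,z$ were arbitrary, $\G=\G(A)$ is strongly connected (in the symmetric case, connected).

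For the converse, assume $\G(A)$ is strongly connected. Because $A\in\M=\Span\{A_0,\ldots,A_d\}$ and each $A_i\jj=n_i\jj$ by \eqref{ck}, the all-ones vector satisfies $A\jj=\theta_0\jj$, where $\theta_0$ is the common (nonnegative) row sum of $A$. Expanding $A=\sum_{i=0}^d\theta_i E_i$ in the basis of primitive idempotents, $\theta_0$ is precisely the eigenvalue of $A$ on $V_0=\Span\{\jj\}$, and $m_0=\dim V_0=1$. Now strong connectivity makes $A$ irreducible (Lemma~\ref{gb}), so by the Perron--Frobenius theorem (Theorem~\ref{gc}) the spectral radius of $A$ is a simple eigenvalue with a positive eigenvector; since $\jj>0$ is such an eigenvector, $\theta_0$ is that spectral radius and hence has algebraic multiplicity $1$. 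As this multiplicity equals $\sum_{i:\,\theta_i=\theta_0} m_i$ and every $m_i\ge 1$, we conclude $\theta_i\neq\theta_0$ for all $i\neq 0$.

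It then remains only to isolate $E_0$. Let $\mu_1,\ldots,\mu_r$ be the distinct values among $\{\theta_i : i\neq 0\}$, all distinct from $\theta_0$, and put $p(t)=\prod_{j=1}^r (t-\mu_j)/(\theta_0-\mu_j)$; using $E_iE_j=\delta_{ij}E_i$ together with $p(\theta_0)=1$ and $p(\mu_j)=0$ gives $p(A)=\sum_i p(\theta_i)E_i=E_0$. Since the $V_0$-column of the second eigenmatrix $Q$ is all-ones (Lemma~\ref{cu}), one has $E_0=J/|X|$, whence $J=|X|\,p(A)=H(A)$. That $H$ may be taken in $\RR[t]$ is clear: the nonreal $\mu_j$ occur in conjugate pairs, as $A$ is real, so the corresponding factors of $p$ combine into real quadratics while $\theta_0$ is real; equivalently, any complex $H$ with $H(A)=J$ splits as $H=H_1+\sqrt{-1}\,H_2$ with $H_1,H_2\in\RR[t]$, and comparing the real matrix $J$ with the real matrices $H_1(A),H_2(A)$ forces $H_1(A)=J$. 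The main obstacle is this converse direction, and precisely the step of identifying the row-sum eigenvalue $\theta_0$ (which comes from $\jj$) with the Perron spectral radius so that Perron--Frobenius yields its simplicity; once $\theta_0$ is known to be simple, isolating $E_0$ by interpolation and checking realness of the polynomial are routine.
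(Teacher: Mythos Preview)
Your proof is correct, but it takes a genuinely different route from the paper. The paper's argument is a two-line citation: it first invokes Lemma~\ref{dl} to see that any $01$-matrix $A\in\M$ has equal constant row and column sums, and then applies the classical results of Hoffman \cite{Haj} (symmetric case) and Hoffman--McAndrew \cite{HMc} (directed case), which characterize when $J$ is a polynomial in the adjacency matrix of a regular (di)graph. In contrast, you give a self-contained argument that exploits the Bose--Mesner structure directly: you write $A=\sum_i\theta_i E_i$ in the primitive idempotents, use Perron--Frobenius together with the positivity of $\jj$ to force $\theta_0$ to be the simple spectral radius (so $\theta_i\neq\theta_0$ for $i\neq0$), and then interpolate to isolate $E_0=J/|X|$. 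Your approach is essentially a reproof of Hoffman's theorem tailored to matrices lying in a Bose--Mesner algebra; it is more explicit and self-contained, while the paper's approach is shorter but outsources the real work to the cited theorems, which in fact apply to any regular (di)graph and not only to those whose adjacency matrix sits inside an association scheme.
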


\begin{proof}
(i) By Lemma~\ref{dl}, for a given $01$-matrix $A\in\M$ we have $A\jj=A^\top\jj=k\jj$ for some $k$. The result now follows from \cite[Theorem~1]{HMc}.

(ii) The claim follows from Lemma~\ref{dl} and \cite[Theorem~1]{Haj}.
\end{proof}

Recall that $A$ generates $\M$ if every element of $\M$ can be written as a polynomial in $A$. One direction of the proof of Corollary~\ref{ds} is already given in Lemma~\ref{dk}.

\begin{corollary}
\label{ds}
Let $\M$ denote the Bose--Mesner algebra of a commutative $d$-class association scheme and let $A$ denote a $01$-matrix in $\M$. Then $A$ generates $\M$ if and only if $A$ has $d+1$ distinct eigenvalues.
\end{corollary}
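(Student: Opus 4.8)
The plan is to prove the two implications separately. The forward implication --- if $A$ has $d+1$ distinct eigenvalues then $A$ generates $\M$ --- is precisely the content of Lemma~\ref{dk}, so nothing new is needed there; I would simply invoke it (it shows $\{A^0,A^1,\ldots,A^d\}$ is linearly independent and hence a basis of $\M$, so $\M=(\langle A\rangle,+,\cdot)$).

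For the reverse implication, I would argue by a dimension count on the algebra generated by $A$. First note that $A\in\M$ is a normal matrix by Lemma~\ref{cc}, hence diagonalizable by Theorem~\ref{gd}. The classical fact I would use is that for a diagonalizable matrix whose number of distinct eigenvalues is $m$, the minimal polynomial equals $\prod_i (t-\lambda_i)$ over the $m$ distinct eigenvalues $\lambda_i$, and so has degree exactly $m$. Consequently $\{A^0,A^1,\ldots,A^{m-1}\}$ is a basis of the commutative algebra $\langle A\rangle=\{p(A)\mid p\in\CC[t]\}$, giving $\dim\langle A\rangle=m$. Since $A\in\M$ and $\M$ is an algebra, we always have $\langle A\rangle\subseteq\M$; the hypothesis that $A$ generates $\M$ means exactly that every element of $\M$ is a polynomial in $A$, i.e. $\M\subseteq\langle A\rangle$, whence $\langle A\rangle=\M$. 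Taking dimensions yields $m=\dim\M=d+1$, so $A$ has exactly $d+1$ distinct eigenvalues.

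The only delicate point --- and the step I would state carefully --- is the identification of $\dim\langle A\rangle$ with the number of distinct eigenvalues of $A$, which genuinely relies on diagonalizability: for a matrix that is not diagonalizable the degree of the minimal polynomial (and hence $\dim\langle A\rangle$) can exceed the number of distinct eigenvalues, and the count would break. Here normality of every element of $\M$ is guaranteed by Lemma~\ref{cc}, so this obstacle dissolves and the rest is routine linear algebra. It is also worth recording, as a consistency check, that any $B\in\M$ has at most $d+1$ distinct eigenvalues, since $I,B,B^2,\ldots$ all lie in the $(d+1)$-dimensional space $\M$ and thus the minimal polynomial of $B$ has degree at most $d+1$; the content of the corollary is exactly that generating $\M$ forces this bound to be attained.
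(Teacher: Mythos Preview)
Your proof is correct and follows essentially the same dimension-count strategy as the paper: both arguments establish that $\dim\langle A\rangle$ equals the number of distinct eigenvalues of $A$ and then compare with $\dim\M=d+1$.

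The execution differs slightly. The paper first invokes Corollary~\ref{dn} to deduce that $\G=\G(A)$ is (strongly) connected, then cites an external reference for the linear independence of $\{A^0,\ldots,A^r\}$ (with $r+1$ the number of distinct eigenvalues), and finishes by the double inequality $d\le r$ and $r\le d$. Your route is more self-contained: you appeal directly to Lemma~\ref{cc} and Theorem~\ref{gd} to get normality and diagonalizability, from which the equality $\dim\langle A\rangle=m$ (with $m$ the number of distinct eigenvalues) is immediate via the minimal polynomial. This bypasses the connectivity detour entirely---connectivity is not actually needed for the dimension count---and makes explicit the one point the paper leaves implicit, namely that diagonalizability is what forces the degree of the minimal polynomial to coincide with the number of distinct eigenvalues rather than merely bounding it from below.
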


\begin{proof}
$(\La)$ If $A$ has $d+1$ distinct eigenvalues the result follows from Lemma~\ref{dk}.

$(\Ra)$ Assume that $A$ generates $\M$. Then the matrix $J$ is polynomial in $A$. By Corollary~\ref{dn} the graph $\G=\G(A)$ is (strongly) connected (directed) graph. For the moment let $r+1$ denote the number of distinct eigenvalues of $\G$. We know that $\{A^0,A^1,\ldots,A^r\}$ is linearly independent set (see, for example, \cite[Proposition~5.4]{SPm}). Since every element of the standard basis $\{A_0,A_1,\ldots,A_d\}$ of $\M$ can be written as polynomial in $A$, we can conclude $d\le r$. On the other hand, since $A$ belong to $\M$, $A^h\in\M=\Span\{A_0,A_1,\ldots,A_d\}$ $(0\le h\le r)$  which yields $r\le d$. The result follows.
\end{proof}

The idea beyond Lemma~\ref{ch} is not so complicated: if $M$ and $N$ are simultaneously diagonalizable and $S = \alpha M+\beta N$, the eigenvalues of $S$ are known.

\begin{lemma}
\label{ch}
Let $\M$ denote the Bose--Mesner algebra of a commutative $d$-class association scheme with adjacency matrices $\{A_i\}_{i=0}^d$. For any complex scalars $\alpha_i$ $(0\le i\le d)$ the set of eigenvalues of a matrix $A=\sum_{i=0}^d \alpha_i A_i$ is
\begin{equation}
\label{db}
\{(P)_{0*}\aalpha,(P)_{1*}\aalpha,\ldots,(P)_{d*}\aalpha\}
\end{equation}
where $(P)_{i*}$ denotes the $i$th row of the first eigenmatrix $P$, and $\aalpha=(\alpha_0,\alpha_1,\ldots,\alpha_d)^\top$. (Note that we do not know the cardinality of \eqref{db}, i.e., we do not know whether the products $(P)_{i*}\aalpha$ $(0\le i\le d)$ are pairwise distinct.)
\end{lemma}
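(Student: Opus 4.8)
The plan is to exploit the common diagonalization of $\M$ by the primitive idempotents $E_0,\ldots,E_d$, together with the relation between these idempotents and the first eigenmatrix $P$. First I would recall from the definition of $P$ (equivalently, from property (eiii) on page~\pageref{cl}) that each $A_h$ acts as a scalar on every common eigenspace $V_i=\im E_i$, namely $A_hE_i=p_h(i)E_i$, where $p_h(i)=(P)_{ih}$ is the entry of $P$ in row $i$ (indexing $V_i$) and column $h$ (indexing $A_h$). Thus the $i$th row $(P)_{i*}=(p_0(i),p_1(i),\ldots,p_d(i))$ records precisely the eigenvalues of $A_0,A_1,\ldots,A_d$ on the single eigenspace $V_i$.

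The key step is then a one-line computation. For each fixed $i$,
$$
AE_i=\sum_{h=0}^d\alpha_hA_hE_i=\sum_{h=0}^d\alpha_h\,p_h(i)\,E_i=\Big(\sum_{h=0}^d(P)_{ih}\alpha_h\Big)E_i=\big((P)_{i*}\aalpha\big)E_i,
$$
so $A$ acts as the single scalar $(P)_{i*}\aalpha$ on all of $V_i$. To pass from this to the full list of eigenvalues of $A$, I would invoke properties (ei) and (eii), that is $E_iE_j=\delta_{ij}E_i$ and $\sum_{i=0}^dE_i=I$, which yield the direct-sum decomposition $V=V_0\oplus V_1\oplus\cdots\oplus V_d$ of $V=\CC^{|X|}$ into nonzero subspaces. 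Since $A$ is scalar on each summand and the summands span $V$, the matrix $A$ is diagonalizable and its eigenvalues are exactly the scalars $(P)_{i*}\aalpha$ ($0\le i\le d$): each is attained because $\dim V_i=m_i\ge1$, and no others can occur because every vector decomposes along the $V_i$.

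There is no essential obstacle here; the statement is a direct consequence of the simultaneous diagonalization of the Bose--Mesner algebra established in Subsection~\ref{COM}, and it does not even require the normality of $A$ separately, since the idempotent structure already furnishes the decomposition. The only point deserving mild care is the distinction between the multiset of eigenvalues (where $(P)_{i*}\aalpha$ occurs with multiplicity $m_i=\dim V_i$) and the underlying set: the lemma asserts only the set equality, which is precisely why the parenthetical remark cautions that the products $(P)_{i*}\aalpha$ need not be pairwise distinct. The argument above respects this, proving both inclusions at the level of sets without any claim of distinctness.
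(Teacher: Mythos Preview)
Your proof is correct and follows essentially the same approach as the paper: both exploit the common eigenspace decomposition $V=\bigoplus_i V_i$ and the scalar action $A_h|_{V_i}=p_h(i)$ to read off the eigenvalues of $A$ as the row--vector products $(P)_{i*}\aalpha$. The only cosmetic difference is that you phrase the computation via the idempotents ($AE_i=((P)_{i*}\aalpha)E_i$) whereas the paper picks eigenvectors $u\in V_h$ directly and writes $Au=((P)_{h*}\aalpha)u$; your version is in fact slightly more explicit about why no further eigenvalues can arise.
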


\begin{proof}
With reference to {\rm Definition~\ref{ci}}, let $\B_i$ $(0\le i\le d)$ denote a basis of the maximal common eigenspace $V_i$ $(0\le i\le d)$ of $A_0,A_1,\ldots,A_d$. For any $u\in\B_h$ $(0\le h\le d)$ we have
\begin{align*}
Au &= \sum_{i=0}^d \alpha_i A_iu\\
&= \sum_{i=0}^d \alpha_i p_i(h)u\\
&= \big((P)_{h*}\aalpha\big) u.
\end{align*}
The result follows.
\end{proof}

\smallskip
Note that the vector $\aalpha$ from Lemma~\ref{ch} is arbitrary. For our goal, the interesting case is when this vector is a $01$-vector (with $\alpha_0=0$) (see Proposition~\ref{do}).

For the moment let $A=A_1+A_2+\cdots+A_t$, where each $A_i$ $(0\le i\le t)$ is a relation (adjacency) matrix of an association scheme $\XXi$. In \cite{TI}, {\sc Ito} asked: when does the matrix $A$ with the usual matrix multiplication generate the Bose--Mesner algebra of $\XXi$, where $A$ is equal to the sum of some relation (adjacency) matrices of an association scheme $\XXi$? In Proposition~\ref{do}, we give an answer to this question. Some results on symmetric association schemes generated by a relation (case $t=1$) can be found in \cite{xTKj}. With reference to Lemma~\ref{ch}, Proposition~\ref{do} simply points out the fact that the eigenvalues of $A_i$ are given in $i$th column of the eigenmatrix $P$.

\begin{proposition}
\label{do}
Let $\XXi$ denote a commutative $d$-class association scheme with adjacency matrices $\{A_i\}_{i=0}^d$, first eigenmatrix $P$, and let $(P)_{*i}$ $(1\le i\le d)$ denote the $i$th column of $P$. The column vector $\sum_{i\in\Phi} (P)_{*i}$ has $d+1$ distinct entries (for some set of indices $\Phi\subseteq\{1,2,\ldots,d\}$) if and only if the matrix $A=\sum_{i\in\Phi} A_i$ generates the Bose--Mesner algebra $\M$ of $\XXi$.
\end{proposition}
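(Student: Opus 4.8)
The plan is to recognize $A=\sum_{i\in\Phi}A_i$ as exactly the matrix produced by Lemma~\ref{ch} for the special choice of $\aalpha$ equal to the $01$-indicator vector of $\Phi$, and then to read its spectrum directly off the column sum of $P$. First I would observe that, since the $A_i$ are mutually disjoint $01$-matrices whose sum is $J$ (axiom~\ref{cg}), the sum $A=\sum_{i\in\Phi}A_i$ taken over $\Phi\subseteq\{1,\ldots,d\}$ is again a $01$-matrix, and it clearly lies in $\M=\Span\{A_0,\ldots,A_d\}$. Thus $A$ is a $01$-matrix in $\M$, so Corollary~\ref{ds} applies to it: $A$ generates $\M$ if and only if $A$ has exactly $d+1$ distinct eigenvalues.

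The second step is to identify the eigenvalues of $A$ with the entries of the column vector $\sum_{i\in\Phi}(P)_{*i}$. Setting $\aalpha=(\alpha_0,\alpha_1,\ldots,\alpha_d)^\top$ with $\alpha_i=1$ for $i\in\Phi$ and $\alpha_i=0$ otherwise (in particular $\alpha_0=0$, as $0\notin\Phi$), Lemma~\ref{ch} gives that the eigenvalues of $A=\sum_{i=0}^d\alpha_iA_i$ are the scalars $(P)_{h*}\aalpha$ for $0\le h\le d$, where $(P)_{h*}\aalpha$ is the eigenvalue of $A$ on the common eigenspace $V_h$. A direct computation shows
$$
(P)_{h*}\aalpha=\sum_{i=0}^d (P)_{hi}\alpha_i=\sum_{i\in\Phi}(P)_{hi},
$$
which is precisely the $h$th entry of the column vector $\sum_{i\in\Phi}(P)_{*i}$. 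Hence the entries of $\sum_{i\in\Phi}(P)_{*i}$, listed for $h=0,1,\ldots,d$, are exactly the $d+1$ eigenvalues of $A$, each attached to its eigenspace $V_h$.

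Finally I would combine the two steps. The number of distinct entries of $\sum_{i\in\Phi}(P)_{*i}$ equals the cardinality of the set of eigenvalues of $A$; in particular this column vector has $d+1$ distinct entries if and only if the $d+1$ scalars $(P)_{h*}\aalpha$ are pairwise distinct, i.e.\ if and only if $A$ has $d+1$ distinct eigenvalues. (Since $A\in\M$ and $\dim\M=d+1$, the minimal polynomial of $A$ has degree at most $d+1$, so $A$ never has more than $d+1$ distinct eigenvalues; thus ``$d+1$ distinct entries'' is exactly the all-distinct case.) Chaining this equivalence with Corollary~\ref{ds} yields the claim. I do not expect a genuine obstacle: the only point needing a little care is the bookkeeping identity that the $h$th entry of the column sum $\sum_{i\in\Phi}(P)_{*i}$ coincides with the row-times-vector expression $(P)_{h*}\aalpha$ of Lemma~\ref{ch}, which is simply the statement that summing selected columns of $P$ equals right-multiplying $P$ by the indicator vector of $\Phi$.
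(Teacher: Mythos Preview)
Your proof is correct and follows essentially the same route as the paper: both identify the eigenvalues of $A=\sum_{i\in\Phi}A_i$ via Lemma~\ref{ch} applied to the indicator vector of $\Phi$, and then invoke the equivalence between ``$A$ has $d+1$ distinct eigenvalues'' and ``$A$ generates $\M$'' (the paper splits this into Lemma~\ref{dk} for one direction and Corollary~\ref{ds} for the other, while you use the biconditional Corollary~\ref{ds} directly). Your explicit remark that $A$ is a $01$-matrix in $\M$, which is what licenses the appeal to Corollary~\ref{ds}, is a nice touch that the paper leaves implicit.
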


\begin{proof}
$(\Ra)$ Assume that the column vector $\sum_{i\in\Phi} (P)_{*i}$ has $d+1$ distinct entries (for some set of indices $\Phi\subseteq\{1,2,\ldots,d\}$). Then, by Lemma~\ref{ch}, $A=\sum_{i\in\Phi} A_i$ has $d+1$ distinct eigenvalues. The result now follows from Lemma~\ref{dk}.

$(\La)$ Assume now that $A=\sum_{i\in\Phi} A_i$ generates $\M$. Then, by Corollary~\ref{ds}, $A$ has $d+1$ distinct eigenvalues. It follows from Lemma~\ref{ch} that the column vector $\sum_{i\in\Phi} (P)_{*i}$ has $d+1$ distinct entries.
\end{proof}



\section{On $\boldsymbol{3}$-class association schemes generated by a graph}
\label{ra}

In the current section, we answer the question of whether a commutative $3$-class association scheme $(X, \{R_i\}^{3}_{i=0})$ (not necessarily symmetric) is generated by a \emph{graph in the scheme}, i.e., by a graph $G_{\Phi}=(X, \{R_i\}_{i\in \Phi})$ with some nonempty index set $\Phi\subseteq \{1,\dots, d\}$.

%
%

Let $(X, \{R_i\}^{d}_{i=0})$ denote a commutative $d$-class association scheme, and consider the graph in the scheme $G_{\Phi}=(X, \{R_i\}_{i\in \Phi})$, with a nonempty set of indices $\Phi\subseteq \{1,\dots, d\}$.
With reference to Lemma~\ref{ct} and Lemma~\ref{ch}, for every $0\le \ell \le d$, the number $\sum_{i\in \Phi}p_i(\ell)$ is the eigenvalue of such a graph associated with the $\ell$th (maximal common) eigenspace of the scheme, where $p_i(\ell)$ (the $(\ell,i)$-entry of the first eigenmatrix $P$) is the eigenvalue of $(X,R_i)$ for the $\ell$th eigenspace. 
To remain consistent with our notation, we index the columns (resp. rows) of $P$ with the ordered set $(R_0,\ldots,R_d)$ (resp. $(V_0,\ldots,V_d)$) such that the $i$th column (resp. row) represents the $i$th relation (resp. eigenspace). 
For sake of simplicity, when $\Phi=\{i\}$, for some $i$ ($1 \leq i \leq d$),  we denote the graph $G_{\{i\}}=(X, R_i)$ by $G_i$.

Going back to our goal of determining if a commutative $3$-class scheme is generated by a graph, we need to distinguish two cases: 
\begin{itemize}
\item symmetric $3$-class schemes, 
\item non-symmetric $3$-class schemes.
\end{itemize}

The result in Proposition~\ref{do}, for $d=3$, will play a fundamental role in both cases.

\subsection{Symmetric $\boldsymbol{3}$-class association schemes}

Let's start with the symmetric case. 

In \cite{vDe3}, {\sc van Dam} completely classified symmetric $3$-class schemes.

\begin{lemma}[{\cite[Section 7]{vDe3}}]\label{classDam}
Let $\XXi=(X, \{R_i\}^{3}_{i=0})$ denote a symmetric $3$-class scheme, and let $G_{i}=(X,R_i)$ $(1\leq i\leq 3)$. Then, $\XXi$ belongs to exactly one of the following categories:
\begin{itemize}
\item[$(a)$] $3$-class schemes which are amorphic, i.e., every graph $G_{i}$ $(1\leq i\leq 3)$ is strongly-regular;
\item[$(b)$] $3$-class schemes with at least one graph $G_{i}$ that is the disjoint union of $N>1$  $(N \in \NN)$ connected strongly-regular graphs (which are not complete graphs) with the same parameters;
\item[$(c)$] $3$-class schemes with at least one graph $G_{i}$ having $4$ distinct eigenvalues.
\end{itemize}
\end{lemma}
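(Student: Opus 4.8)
The plan is to classify $\XXi$ by counting the distinct eigenvalues of the three relation graphs $G_1,G_2,G_3$ and then to analyze separately the case where one of them is disconnected. First I would record the basic data. For each $i\in\{1,2,3\}$ the graph $G_i=(X,R_i)$ is regular of valency $n_i=p^0_{ii}\ge 1$ by \eqref{ck}, and since $\XXi$ is symmetric its adjacency matrix $A_i$ is a real symmetric matrix lying in the $4$-dimensional algebra $\M$. By Lemma~\ref{ct} the spectrum of $A_i$ is contained in the $i$-th column $\{p_i(0),p_i(1),p_i(2),p_i(3)\}$ of $P$, so $A_i$ has at most $4$ distinct eigenvalues; and because $A_i$ is a nonzero $01$-matrix with zero diagonal, $\trace(A_i)=0$ with $A_i\neq 0$ forces at least $2$ distinct eigenvalues. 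This reduces matters to a count: if some $A_i$ has exactly $4$ distinct eigenvalues, then $\XXi$ lands in category $(c)$, and it remains to treat the case in which every $A_i$ has exactly $2$ or $3$ distinct eigenvalues.

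Next I would invoke the standard description of regular graphs with few eigenvalues together with Theorem~\ref{gc}. Each connected component of $G_i$ is a connected regular graph of valency $n_i$ whose eigenvalues form a subset of the spectrum of $A_i$, hence at most three distinct eigenvalues, with top eigenvalue $n_i$ of multiplicity one by Theorem~\ref{gc}. A connected regular graph with two distinct eigenvalues is complete (its smallest eigenvalue is $-1$, so $G_i$ is a disjoint union of equal-sized cliques), while one with three distinct eigenvalues is strongly regular. Moreover, within any component the number of common $G_i$-neighbors of two adjacent vertices equals $p^i_{ii}$ and of two nonadjacent vertices equals $p^j_{ii}$, where $R_j$ carries the within-component non-edges; as these are constants of the scheme, all components of $G_i$ are strongly regular with one and the same parameter set. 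Thus, when $A_i$ has at most three eigenvalues, $G_i$ is either a connected strongly regular graph, a disjoint union of equal cliques, or a disjoint union of $N>1$ isomorphic connected non-complete strongly regular graphs.

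I would then separate the two remaining possibilities. If no $G_i$ is a disjoint union of non-complete strongly regular graphs, then every $G_i$ is strongly regular in the sense of Subsection~\ref{SRG} (a connected strongly regular graph, a clique, or a disjoint union of cliques all qualify); since the complement of a strongly regular graph is strongly regular, every merging of two of the $R_i$ then yields a $2$-class scheme and every full merging the trivial scheme, so $\XXi$ is amorphic and we are in category $(a)$ (compare \cite{vDM}). Otherwise some $G_i$ is a disjoint union of $N>1$ non-complete connected strongly regular graphs of equal parameters, which is exactly category $(b)$. Finally I would verify mutual exclusivity: a strongly regular graph, and any disjoint union of strongly regular graphs, has at most three distinct eigenvalues, so $(c)$ is disjoint from $(a)$ and $(b)$; and a disjoint union of $N>1$ non-complete connected strongly regular graphs is not itself strongly regular, since two nonadjacent vertices of one component have $p^j_{ii}>0$ common neighbors whereas vertices in different components have none, so $(b)$ is disjoint from $(a)$.

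The step I expect to be the real obstacle is ruling out the simultaneous occurrence of $(b)$ and $(c)$, i.e. showing that a genuinely disconnected relation forces every relation to have at most three eigenvalues. The clean way to see this is structural: if $G_i$ is disconnected with non-complete components of common size $m$, those components form a system of imprimitivity, and with only the two relations $R_j,R_k$ left besides $R_0,R_i$ one is forced to take $R_j$ to be the within-component non-edges and $R_k$ the between-component pairs. Then $G_j$ is the disjoint union of the (strongly regular) complements of the components, while $G_k$ is the complete $N$-partite graph $K_{N\times m}$; both are strongly regular and hence have exactly three distinct eigenvalues. Consequently no relation of such a scheme can have four distinct eigenvalues, which is precisely what excludes the overlap of $(b)$ and $(c)$ and shows that $\XXi$ falls into exactly one of the three categories.
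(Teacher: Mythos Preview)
The paper does not prove this lemma; it is quoted from van Dam \cite[Section~7]{vDe3} and used as a black box in the proof of Theorem~\ref{CaseSym}. There is therefore no in-paper argument to compare your proposal against, and your self-contained proof is a genuine addition rather than a rederivation.

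Your overall strategy is the natural one and is essentially correct: count distinct eigenvalues of each $A_i$, invoke the characterisation of connected regular graphs with at most three eigenvalues, and then use the imprimitivity forced by a disconnected relation to separate $(b)$ from $(a)$ and from $(c)$. Two steps should be tightened. First, when you argue that the components of a disconnected $G_i$ are strongly regular ``with one and the same parameter set'' because the number of common $G_i$-neighbours of two non-adjacent vertices in a component equals $p^j_{ii}$, ``where $R_j$ carries the within-component non-edges'', you are already assuming that all within-component non-edges lie in a \emph{single} relation $R_j$; that is exactly what you establish only later via the system-of-imprimitivity argument. The cleaner route here is purely spectral: the adjacency matrix of a disjoint union is block diagonal, so each component has its spectrum contained in that of $G_i$; every component is therefore a connected $n_i$-regular graph with at most three eigenvalues, and since the eigenvalue triple of a connected strongly-regular graph determines its parameters, all components agree. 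Second, in your last paragraph you claim that $G_j$ (the disjoint union of the within-block complements) is itself strongly regular. In general it is not: two $G_j$-non-adjacent vertices in different blocks have no common $G_j$-neighbour, whereas two in the same block have the $\mu$-value of the complement $\overline{H}$, and these differ unless $\overline{H}$ is a union of cliques. What you actually need, and what is immediate, is only that $G_j$ is a disjoint union of isomorphic graphs each having at most three eigenvalues, so $G_j$ has at most three distinct eigenvalues; the same holds for $G_i$ and for $G_k=K_{N\times m}$. With these two corrections your exclusivity argument for $(b)$ versus $(c)$ goes through, and the proof is complete.
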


In Lemma~\ref{aP} we describe the shape of the first eigenmatrix of the first class of graphs from Lemma~\ref{classDam}.

\begin{lemma}\label{aP}
Let $\XXi$ denote an amorphic symmetric $3$-class scheme. Then, its first eigenmatrix $P$ has the following form:
\[P= \,
\begin{blockarray}{cccc}
R_0 & R_1 & R_2 & R_3\\
\begin{block}{(cccc)}
1 & n_1 & n_2 & n_3 \\
1 & p_1(1) & p_2(1) & p_3(1)\\
1 & p_1(1) & p_2(2) & p_3(2)\\
1 & p_1(3) & p_2(2) & p_3(1)\\
\end{block}
\end{blockarray}\, ,
\]
where $n_i$ is the $i$th valency of the scheme.
\end{lemma}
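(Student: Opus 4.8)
The plan is to read off the structure of $P$ from two ingredients: the fact that, since $\XXi$ is symmetric and amorphic, each graph $G_i=(X,R_i)$ $(1\le i\le 3)$ is strongly-regular (Lemma~\ref{classDam}$(a)$), and the single linear relation $A_1+A_2+A_3=J-I$. First I record that $n_i=p_i(0)$ is the eigenvalue of $A_i$ on $V_0=\langle\jj\rangle$, since $A_i\jj=n_i\jj$ (Lemma~\ref{ct}); this accounts for the top row $(1,n_1,n_2,n_3)$ and the leftmost column of $1$'s. The crucial first step is to show that the three remaining entries $p_i(1),p_i(2),p_i(3)$ of the $i$th column take \emph{exactly two} distinct values. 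Being a relation of a genuine $3$-class scheme, $G_i$ is not complete; so either $G_i$ is connected, hence a connected non-complete strongly-regular graph with exactly three eigenvalues $n_i>r_i>s_i$ and $n_i$ simple by Perron--Frobenius (Theorem~\ref{gc}), forcing $\{p_i(1),p_i(2),p_i(3)\}\subseteq\{r_i,s_i\}$ with both values occurring; or $G_i$ is disconnected, hence a disjoint union of at least two cliques (\cite[Theorem~3.11]{Cam}) with eigenvalues $n_i$ and $-1$, both of which again occur among $p_i(1),p_i(2),p_i(3)$. In either case exactly one of the two values is repeated and the other, the \emph{singleton} of column $i$, occurs at a unique index $o_i\in\{1,2,3\}$.

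The heart of the proof is to show that $o_1,o_2,o_3$ are pairwise distinct. Here I use that, by Corollary~\ref{cq}, every row $V_\ell$ $(\ell\ge 1)$ of $P$ sums to $0$, so (as $p_0(\ell)=1$)
\[
p_1(\ell)+p_2(\ell)+p_3(\ell)=-1\qquad(\ell=1,2,3).
\]
Suppose, for contradiction, that two singletons coincide, say $o_1=o_2=c$, and put $\{a,b\}=\{1,2,3\}\setminus\{c\}$. Then both column $1$ and column $2$ are constant on the pair of rows $\{V_a,V_b\}$, i.e. $p_1(a)=p_1(b)$ and $p_2(a)=p_2(b)$; the displayed identity then forces $p_3(a)=p_3(b)$ as well. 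Consequently rows $V_a$ and $V_b$ of $P$ agree in every column, which contradicts the maximality of the common eigenspaces (equivalently, the rows of $P$ are pairwise distinct). Hence $\{o_1,o_2,o_3\}=\{1,2,3\}$.

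It remains to put $P$ into the displayed normal form. Since the eigenspaces $V_1,V_2,V_3$ and the relations $R_1,R_2,R_3$ carry no intrinsic ordering, I may reorder the rows of $P$ freely (this is just a relabelling of the idempotents $E_\ell$); as $o_1,o_2,o_3$ are distinct, there is a unique row permutation sending $o_1\mapsto 3$, $o_2\mapsto 1$, $o_3\mapsto 2$, after which the singleton of $A_1$ sits at $V_3$, that of $A_2$ at $V_1$, and that of $A_3$ at $V_2$. Reading off the resulting coincidences $p_1(1)=p_1(2)$, $p_2(2)=p_2(3)$ and $p_3(1)=p_3(3)$ produces exactly the asserted matrix. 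The main obstacle is the pairwise-distinctness of the singletons in the second paragraph; once that is secured, everything else is bookkeeping. One subtlety to flag is that disconnected graphs $G_i$ (unions of cliques) genuinely occur in amorphic schemes --- for instance in the direct product of two one-class schemes --- so the first step must avoid assuming primitivity; reassuringly, the ``exactly two values'' conclusion holds uniformly in the connected and disconnected cases.
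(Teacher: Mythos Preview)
Your proof is correct and follows essentially the same approach as the paper's: the paper records that the rows of $P$ are pairwise distinct (since $P$ is nonsingular) and that every nontrivial row sums to zero, then defers to \cite[page~76]{vDe3} for the remaining combinatorics. You have spelled out precisely that combinatorics --- the ``two-values-per-column'' observation coming from the strongly-regular structure of each $G_i$, and the pigeonhole step forcing the three singleton positions $o_1,o_2,o_3$ to be distinct --- so your argument is self-contained where the paper's is not.
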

\begin{proof}
Note that, as $P$ is not singular, the rows of $P$ are pairwise different. Furthermore, since the entries of the first column of $Q$ are all one (see Lemma~\ref{cu}), the identity $P Q=|X| I$ (Lemma \ref{cp}) implies that each row of $P$, except for the $V_0$-row, has sum zero (see Corollary~\ref{cp}). The result follows (see \cite[page 76]{vDe3} for further details).  
\end{proof}

\bigskip
In Lemma~\ref{dP} and Proposition~\ref{Gp} we we are dealing with the second class of graphs from Lemma~\ref{classDam}.

\begin{lemma}\label{dP}
Let $\XXi=(X, \{R_i\}^{3}_{i=0})$ denote a symmetric $3$-class scheme. Assume that $G_1=(X,R_1)$ is the disjoint union of $N>1$ $(N \in \NN)$ connected, non-complete strongly-regular graphs, each one with valency $k$, eigenvalues $r$ and $s$ $(r \ne k, s\ne k)$, and $w$ vertices. Then, the following hold.

\begin{enumerate}[label=\rm(\roman*)]
\item The first eigenmatrix $P$ of $\XXi$ has the form
\[P= \,
\begin{blockarray}{cccc}
R_0 & R_1 & R_2 & R_3\\
\begin{block}{(cccc)}
1& k& -1 - k + w& (-1 + N) w\\
1& k& -1 - k + w&-w\\
1& r& -1 - r& 0\\
1&s&-1 - s& 0\\
\end{block}
\end{blockarray}\, 
\]
\item None of the graphs $G_{i}=(X,R_i)$ $(1\leq i\leq 3)$ generates the scheme $\XXi$
\item The graph $G_{\{1,2\}}=(X, \{R_1, R_2\})$ does not generate the scheme $\XXi$.
\end{enumerate}
\end{lemma}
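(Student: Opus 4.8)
The plan is to first pin down the combinatorial shape of the three nontrivial relations, then read off the eigenmatrix $P$ by a direct spectral computation, and finally deduce (ii) and (iii) as immediate consequences of Proposition~\ref{do}.

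For part (i), I would order the vertex set by the $N$ connected components of $G_1$, so that, writing $B$ for the adjacency matrix of a single strongly-regular component (on $w$ vertices, with valency $k$ and restricted eigenvalues $r,s$), we have $A_1=I_N\otimes B$ and $|X|=Nw$. The first task is to show that the remaining off-diagonal pairs split into exactly two relations: the pairs lying in the same component but non-adjacent, and the pairs lying in different components. I expect this identification to be the main obstacle, and I would handle it as follows. Using the strongly-regular identity $B^2=kI_w+\lambda B+\mu(J_w-I_w-B)$ and the fact that a connected non-complete strongly-regular graph has $\mu\ge 1$, one gets $A_1^2=kA_0+\lambda A_1+\mu\,A_S$, where $A_S=I_N\otimes(J_w-I_w-B)$ is the ``same-component, non-adjacent'' matrix. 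Since $A_1^2,A_0,A_1\in\M$ and $\mu\ne 0$, this forces $A_S\in\M$; as $A_S$ is a $01$-matrix it is a sum of relation matrices, and likewise $A_T=J-A_0-A_1-A_S=(J_N-I_N)\otimes J_w\in\M$ is a $01$-matrix. Because the scheme has only three nontrivial classes and $R_1$ is one of them, $A_S$ and $A_T$ must each be a single class (neither $S$ nor $T$ is empty, as the component is non-complete and $N>1$); relabelling if necessary, $R_2=S$ and $R_3=T$.

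Once the relations are identified, I would obtain $P$ by decomposing $\CC^{|X|}$ into common eigenspaces. Writing $\mathbf 1_N,\mathbf 1_w$ for the all-ones vectors and letting $v$ range over the $r$- and $s$-eigenvectors of $B$, the vectors $\mathbf 1_N\otimes\mathbf 1_w$, $(u\otimes\mathbf 1_w)$ with $u\perp\mathbf 1_N$, and $u\otimes v$ give four distinct eigenvalue triples for $(A_1,A_2,A_3)$, using $A_2=I_N\otimes(J_w-I_w-B)$ and $A_3=(J_N-I_N)\otimes J_w$. Reading off the eigenvalue of each $A_i$ on each of these four spaces produces exactly the four rows of the claimed matrix $P$; the four rows are pairwise distinct since $k,r,s$ are distinct and $(N-1)w\ne -w$ as $N>1$, so these are the maximal common eigenspaces and $P$ is complete.

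For parts (ii) and (iii), I would simply invoke Proposition~\ref{do}. For (ii), the columns $(P)_{*1}=(k,k,r,s)^\top$, $(P)_{*2}=(w-1-k,w-1-k,-1-r,-1-s)^\top$, and $(P)_{*3}=((N-1)w,-w,0,0)^\top$ each have a repeated entry, hence at most three distinct entries, so no single $G_i$ generates $\M$. For (iii), the column vector $(P)_{*1}+(P)_{*2}=(w-1,w-1,-1,-1)^\top$ has only two distinct entries, so $G_{\{1,2\}}$ does not generate $\M$ either. Both conclusions follow because, by Proposition~\ref{do}, generation requires exactly $d+1=4$ distinct entries in the relevant column sum.
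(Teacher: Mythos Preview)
Your proposal is correct. Parts (ii) and (iii) match the paper's argument exactly: both read off the relevant column (or column sum) of $P$ and observe that it has fewer than four distinct entries, invoking Proposition~\ref{do}.

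For part (i) you take a different, more self-contained route. The paper simply cites \cite[Section~3.1 and p.~88]{vDe3} for the shape of $P$ and then describes informally what each column represents. You instead identify $R_2$ and $R_3$ intrinsically---using the strongly-regular relation $B^2=kI+\lambda B+\mu(J_w-I_w-B)$ with $\mu\ge 1$ to force $A_S=I_N\otimes(J_w-I_w-B)\in\M$, and then partitioning the remaining pairs---and compute $P$ directly via the tensor-product eigenspace decomposition of $\CC^{Nw}$. This buys you independence from the external reference and makes the argument reproducible from first principles; the paper's approach is shorter but relies on van~Dam's construction being available to the reader. Your identification step (that $A_S$ and $A_T$, being nonzero disjoint $01$-matrices in $\M$ summing to $A_2+A_3$, must each equal a single relation matrix) is the one place a reader might pause, but it is sound for exactly the reason you give.
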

\begin{proof}
(i) The shape of $P$ is given in the first table of \cite[page 88]{vDe3}.  The proof is done by construction in \cite[Section 3.1]{vDe3}. In the next few lines, we provide a detailed description of the first eigenmatrix $P$. The  $R_1$-column of $P$ represents (the eigenvalues of) the graph which is the disjoint union of $N$ strongly-regular graphs with the same parameters and (distinct) eigenvalues $k$, $r$, $s$; the $R_2$-column represents the disjoint union of the complements of the $N$ strongly-regular graphs (with eigenvalues $-1 - k + w$, $-1-r$, $-1-s$); the $R_3$-column represents the complete $N$-partite graph (on the $N$ sets of $w$ vertices).
 
(ii) Since we are dealing with connected (non-complete) strongly-regular graphs, we may assume $r\ge0$ and $s<0$ such that $s \ne -1$ (by \eqref{rs1} and Remark \ref{r_0}). It is evident that none of the graphs $G_{i}=(X, R_i)$ $(1\leq i\leq 3)$ generates the scheme since each one has at most 2 distinct eigenvalues (for example, from the first eigenmatrix $P$, the eigenvalues of $G_{2}=(X,R_2)$ are $-1 - k + w$, $-1-r$, $-1-s$; but the first and the third ones are equal when $r=0$, as $s=k-w$ by \eqref{r_zero}). 

(iii) The graph $G_{\{1,2\}}=(X,\{R_1, R_2\})$ has $2$ distinct eigenvalues, namely $w-1$ and $-1$, obtained by adding the $R_1$-column and the $R_2$-column of $P$. Thus, $G_{\{1,2\}}=(X,\{R_1, R_2\})$ cannot generate the scheme, so proving the claim.
\end{proof}

Note that, in light of our aim (to figure out if there is a generating graph in the scheme), part (ii) allows us to set aside all graphs that we do not need to consider (see next result).

{
\begin{proposition}
\label{Gp}
Let $\XXi=(X,\{R_i\}^{3}_{i=0})$ denote a symmetric $3$-class scheme, and let $G_{i}=(X,R_i)$ $(1\leq i\leq 3)$. Assume that $\XXi$ has at least one graph $G_{i}$ (we can set $i=1$) which is the disjoint union of $N>1$  $(N \in \NN)$ connected strongly-regular graphs (which are not complete graphs) with the same parameters: valency $k$, eigenvalues $r$ and $s$ $(r \ne k, s\ne k)$, and $w$ vertices. If $r>0$ then $\XXi$ can be generated only by the following two graphs: $G_{\{1,3\}}$ and $G_{\{2,3\}}$.
\end{proposition}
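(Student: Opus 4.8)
The plan is to turn the question into a finite eigenvalue check via Proposition~\ref{do}. Since $d=3$, a graph in the scheme $G_\Phi=(X,\{R_i\}_{i\in\Phi})$ with $\emptyset\neq\Phi\subseteq\{1,2,3\}$ generates $\M$ if and only if the column vector $\sum_{i\in\Phi}(P)_{*i}$ has exactly four distinct entries, where $P$ is the first eigenmatrix computed in Lemma~\ref{dP}(i). Because there are only $2^3-1=7$ nonempty subsets $\Phi$, the statement reduces to inspecting, for each of them, the coordinates of the corresponding sum of columns of~$P$.

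First I would discard the non-generating subsets. The three singletons are ruled out by Lemma~\ref{dP}(ii) and the pair $\{1,2\}$ by Lemma~\ref{dP}(iii); each is also immediate from $P$, since for instance $(P)_{*1}=(k,k,r,s)^\top$ already repeats an entry. For the full set $\Phi=\{1,2,3\}$ one computes $\sum_{i=1}^{3}(P)_{*i}=(Nw-1,-1,-1,-1)^\top$ (this is just the column sum realizing $J-I$), which has only two distinct entries and hence fails. Thus the only possible generators are $G_{\{1,3\}}$ and $G_{\{2,3\}}$.

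It then remains to show that these two do generate, i.e.\ that their column sums
$$
\sum_{i\in\{1,3\}}(P)_{*i}=(k+(N-1)w,\;k-w,\;r,\;s)^\top,\qquad \sum_{i\in\{2,3\}}(P)_{*i}=(-1-k+Nw,\;-1-k,\;-1-r,\;-1-s)^\top
$$
each have four distinct coordinates. In both cases the first coordinate is the valency of the connected graph $G_\Phi$ (note $G_\Phi\supseteq G_3$, a connected complete $N$-partite graph), so by the Perron--Frobenius Theorem~\ref{gc} it is the unique eigenvalue of largest modulus and is automatically different from the remaining three. For $\{2,3\}$ the other entries $-1-k,\,-1-r,\,-1-s$ are pairwise distinct because $k,r,s$ are, so that case is finished. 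For $\{1,3\}$ the other entries are $k-w,\,r,\,s$, where $r\neq s$, and here the hypothesis $r>0$ guarantees $k-w=-(1+k_2)<0<r$, so $k-w\neq r$.

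The one genuinely nontrivial inequality, which I expect to be the main obstacle, is $k-w\neq s$ for $\Phi=\{1,3\}$, since both sides are negative. I would settle it by a short computation with the standard strongly-regular identities, valid because each component is connected and non-complete (so $\mu=k+rs\neq0$). Writing $w=(k-r)(k-s)/(k+rs)$ as in \eqref{para} and substituting into $k-w=s$, after clearing the denominator one obtains the equivalent equation $r(k-s)(s+1)=0$. Since $r>0$, $s\neq k$ (hypothesis), and $s\neq-1$ (by \eqref{rs1}), this cannot hold, so $k-w\neq s$. Hence the column sum for $\{1,3\}$ has four distinct coordinates as well, and we conclude that the generating graphs of $\XXi$ are exactly $G_{\{1,3\}}$ and $G_{\{2,3\}}$.
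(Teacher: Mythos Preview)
Your argument is correct and follows the same overall strategy as the paper---reduce via Proposition~\ref{do} to checking, for each nonempty $\Phi\subseteq\{1,2,3\}$, whether the column sum $\sum_{i\in\Phi}(P)_{*i}$ from Lemma~\ref{dP}(i) has four distinct entries---but your execution is tighter in two places worth noting. For $G_{\{2,3\}}$ you observe that the three non-principal entries $-1-k,-1-r,-1-s$ are pairwise distinct simply because $k,r,s$ are; the paper instead reduces to the equation $k-Nw=s$ and then splits into the conference and integral-eigenvalue cases, which your remark renders unnecessary. Your use of Perron--Frobenius to dispose of the $V_0$-entry uniformly (simplicity of the valency for a connected regular graph, hence it cannot coincide with the eigenvalue on any $V_j$ with $j\ge1$) is also cleaner than the paper's direct magnitude comparisons. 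One small caveat: the identity $w=(k-r)(k-s)/(k+rs)$ that you invoke is recorded in the paper as~\eqref{para} only under Case~2; it does hold for every connected non-complete strongly-regular graph (since then $\mu=k+rs\neq0$), and you essentially say so, but a reader cross-referencing \eqref{para} should be told that the formula is being used in that generality. The paper instead treats the conference case separately at this point, arriving at the same factorisation $r(s+1)(k-s)=0$ that you obtain directly.
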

}

\begin{proof}
By Lemma \ref{dP}, the first eigenmatrix $P$ has the following form:
\[P= \,
\begin{blockarray}{cccc}
R_0 & R_1 & R_2 & R_3\\
\begin{block}{(cccc )}
1& k& -1 - k + w& (-1 + N) w \\
1& k& -1 - k + w&-w \\
1& r& -1 - r& 0 \\
1&s&-1 - s& 0\\
\end{block}
\end{blockarray}\, ,
\]
where $k$, $r$, $s$ are the distinct eigenvalues of a connected, non-complete strongly-regular graph of valency $k$ on $w$ vertices, with $r\ge0$ and $s<0, \neq-1$ (see \eqref{rs1} and Remark \ref{r_0}); the parameter $N$($>1$) is the number of the (connected, non-complete) strongly-regular graphs, with the same parameters, which appear in the definition of the scheme. In light of Lemma \ref{dP}(ii), (iii), two graphs remain to be checked: $G_{\{i,3\}}=(X,\{R_i, R_3\})$, $i\in\{1,2\}$.

Recall that $r>0$. First, consider the graph $G_{\{1,3\}}=(X,\{R_1, R_3\})$, whose eigenvalues (arising from the sum of the $R_1$-column and the $R_3$-column in $P$) are $k + (-1 + N) w$, $k - w$, $r$, $s$. We verify that they are distinct from each other in the next few lines.  Note that $w>k$ (by \eqref{w}), $k>r$ (this is a well-known fact, see \cite[Section 2.3]{DP} for example), and $r>0>s$ (by \eqref{rs1}). Thus, the first eigenvalue $k + (-1 + N) w$ cannot be equal to any of the other ones, since it is positive and greater than each of the others ( $(-1 + N) w>0$). The second $k - w$, being negative ($w>k$), could be equal to $s$. Then, to get a contradiction, assume $k - w=s$. Following Subsection \ref{SRG}, for a connected strongly-regular graph, only one of the two cases can occur:
\begin{itemize}
\item[(a)] $w=2k+1$, \ $r,s=\frac{-1 \pm \sqrt{w}}{2}$ (see {\sc Case 1} at page \pageref{case1}, and \eqref{rs0});
\item[(b)] $w=\frac{(k - r) (k - s)}{k + r s}$, \ $r,s$ $\in \mathbb{Z}$ (see {\sc Case 2} at page \pageref{case2}, and  (\ref{para})).
\end{itemize}

Suppose (a) holds. Then, applying the appropriate substitutions in the equation $k - w=s$, we {get $-k-1=\frac{-1-\sqrt{w}}{2}$, and since $w=2k+1$ this yields $2k+1=\sqrt{2k+1}$. Thus we} obtain $k=0$, which is impossible by definition ($k>0$ is the valency of the graph). Now, assume (b) holds. If we replace $w$ in the equation $k - w=s$ with the expression provided in (b), then we {have $k(k+rs)-(k-r)(k-s)=s(k+rs)$ which yields $(rs+r)(k-s)=0$. Thus we} get three solutions: $k=s$ or $r=0$ or $s=-1$. All of them would obviously yield a contradiction (indeed, we have $k>0$, $r>0$, $s< - 1$ by \eqref{rs1}). 
Therefore, it follows that the graph $G_{\{1,3\}}=(X,\{R_1, R_3\})$ has $4$ distinct eigenvalues, thus generating the scheme.

Now, consider the graph $G_{\{2,3\}}=(X,\{R_2, R_3\})$ {(by assumption recall that $r>0$). Eigenvalues of $\G_{\{2,3\}}$} (arising from the sum of the $R_2$-column and the $R_3$-column in $P$) are $-1 - k + N w$, $-1 - k$, $-1 - r$, $-1 - s$. Applying the same arguments as before {(or, for example, putting equality between some of them to get a contradiction)}, it turns out that they are all distinct unless $k - N w = s$ holds. Suppose $k - N w = s$. Again, we need to distinguish the two cases (a) and (b). If (a) holds, then {from $\frac{-1-\sqrt{2k+1}}{2}=k-N(2k+1)$ we get $2N(2k+1)=2k+1+\sqrt{2k+1}$. Thus we have two solutions:} $N=\frac{1 + 2 k + \sqrt{1 + 2 k}}{2 (1 + 2 k)}$ or $k=-\frac{1}{2}$. Both of them are impossible since $N>1$ and $k>0$. If (b) holds, then {$s=k-N\frac{(k-r)((k-s))}{k+rs}$ yields $(s-k)(k+rs-N(k-r))=0$. From it} two solutions arise: $k=s$ or $N = \frac{k + r s}{k - r}$. As $s<0<r$ (by \eqref{rs1}) and $k + r s<k - r$ (note that $r$, $s$ are integers), both of them are not acceptable. Thus, also the graph $G_{\{2,3\}}=(X,\{R_2, R_3\})$ has $4$ distinct eigenvalues, thus generating the scheme. {The result follows.}
\end{proof}

\begin{theorem}
\label{CaseSym}
Let $\XXi$ denote a symmetric $3$-class scheme. Then, the scheme $\XXi$ is generated by a (undirected) graph if and only if it is not amorphic.
\end{theorem}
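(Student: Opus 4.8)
The plan is to recast ``generated by an undirected graph'' in terms of graphs lying in the scheme and the first eigenmatrix $P$, and then to split along {\sc van Dam}'s trichotomy. First I would observe that if a $01$-matrix $A$ generates $\M$ then $A\in\M$, and since $A_0,\ldots,A_3$ are mutually disjoint $01$-matrices summing to $J$, one must have $A=\sum_{i\in\Phi}A_i$ for a nonempty $\Phi\subseteq\{1,2,3\}$; as $\XXi$ is symmetric each $A_i$ is symmetric, so $A$ is the adjacency matrix of the undirected graph $G_\Phi$. Hence the theorem is equivalent to: \emph{$\XXi$ is generated by some $G_\Phi$ if and only if $\XXi$ is not amorphic.} Throughout, by Proposition~\ref{do}, the graph $G_\Phi$ generates $\M$ precisely when the column vector $\sum_{i\in\Phi}(P)_{*i}$ has $4$ distinct entries.

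For the direction ``not amorphic $\Rightarrow$ generated'', I would appeal to Lemma~\ref{classDam}. In case $(c)$ some single relation $G_i$ has $4$ distinct eigenvalues, so $A_i$ generates $\M$ directly by Corollary~\ref{ds}. In case $(b)$ I relabel so that $G_1$ is the disjoint union of $N>1$ connected non-complete strongly-regular graphs and read off $P$ from Lemma~\ref{dP}(i). When $r>0$, Proposition~\ref{Gp} already exhibits the generators $G_{\{1,3\}}$ and $G_{\{2,3\}}$. The case $r=0$ lies outside the hypothesis of Proposition~\ref{Gp} and must be treated by hand: substituting $s=k-w$ from \eqref{r_zero}, the column sum for $G_{\{2,3\}}$ has entries $-1-k+Nw,\ -1-k,\ -1,\ -1-k+w$, which are pairwise distinct because $N>1$ and $w>k>0$; thus $G_{\{2,3\}}$ generates $\M$ here as well.

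For the converse ``amorphic $\Rightarrow$ no generating graph'', I would take $P$ in the shape of Lemma~\ref{aP} and rule out every nonempty $\Phi$. If $|\Phi|=1$, the column $R_i$ already repeats a value among the rows $V_1,V_2,V_3$ (rows $V_1,V_2$ for $R_1$; $V_2,V_3$ for $R_2$; $V_1,V_3$ for $R_3$), so it has at most $3$ distinct entries. If $|\Phi|=3$, then $\sum_{i\in\Phi}A_i=J-I$ is the complete graph, with only two eigenvalues. The decisive case is $|\Phi|=2$: writing $\{l\}=\{1,2,3\}\setminus\Phi$, Corollary~\ref{cq} gives $\sum_{i\in\Phi}(P)_{hi}=-1-(P)_{hl}$ for each $h\in\{1,2,3\}$, so the pair of equal entries of the single column $R_l$ is inherited by the column sum $\sum_{i\in\Phi}(P)_{*i}$. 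In every case the relevant column vector has a repeated entry, so by Proposition~\ref{do} no $G_\Phi$ generates $\M$.

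I expect the two genuine obstacles to be, first, spotting the ``complement'' identity $\sum_{i\in\Phi}(P)_{hi}=-1-(P)_{hl}$, which collapses the seven-subset bookkeeping of the amorphic case to the single observation that a strongly-regular relation contributes only two distinct values to the rows $V_1,V_2,V_3$; and second, the $r=0$ subcase of class $(b)$, which is not supplied by Proposition~\ref{Gp} and has to be closed separately via the identities in \eqref{r_zero} together with $N>1$.
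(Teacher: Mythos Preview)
Your proposal is correct and follows essentially the same route as the paper's proof: both split along {\sc van Dam}'s trichotomy (Lemma~\ref{classDam}), dispose of case $(c)$ immediately, invoke Proposition~\ref{Gp} for case $(b)$ with $r>0$, handle the residual subcase $r=0$ by checking that $G_{\{2,3\}}$ has four distinct eigenvalues, and rule out every $G_\Phi$ in the amorphic case via the complement identity $\sum_{i\in\Phi}(P)_{hi}=-1-(P)_{hl}$ coming from Corollary~\ref{cq}. Your presentation is somewhat tidier in two places---you parametrize the $r=0$ subcase directly by $k,w,N$ rather than substituting $w=2k-\lambda$, and you state the complement identity once instead of writing out $G_{\{1,2\}}$ explicitly and appealing to symmetry---but these are cosmetic differences, not a different argument.
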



\smallskip
\begin{proof} We take advantage of the Van Dam classification given in Lemma \ref{classDam}.

{\sc Case 1}. Assume that $\XXi=(X, \{R_i\}^{3}_{i=0})$  is a symmetric $3$-class scheme which is amorphic, i.e., every graph $G_{i}=(X, R_i)$ $(1\leq i\leq 3)$ is a strongly-regular graph (not necessarily connected). Such a scheme is never generated by a graph in the scheme: we prove that any time we take the union of 2 classes, we get a graph with at most $3$ distinct eigenvalues. In order to understand this, it is enough to look at the first eigenmatrix $P$ of the scheme $\XXi$, whose form is provided by Lemma \ref{aP}:
\begin{equation*}P= \,
\begin{blockarray}{cccc}
R_0 & R_1 & R_2 & R_3\\
\begin{block}{(cccc)}
1 & n_1 & n_2 & n_3 \\
1 & p_1(1) & p_2(1) & p_3(1)\\
1 & p_1(1) & p_2(2) & p_3(2)\\
1 & p_1(3) & p_2(2) & p_3(1)\\
\end{block}
\end{blockarray}\, .
\end{equation*}
None of the graphs $G_{i}=(X,R_i)$ $(1\leq i\leq 3)$ generates the scheme since each one has at most $3$ distinct eigenvalues. For example, the eigenvalues of $G_{1}=(X,R_1)$ are $n_1$, $p_1(1)$, $p_1(3)$; note that $2$ of them can be equal to each other but not all $3$. Now, consider the graph $G_{\{1,2\}}=(X,\{R_1,R_2\})$, whose eigenvalues are obtained by adding the $R_1$-column and the $R_2$-column of $P$; precisely, they are $n_1+n_2$,  $p_1(1)+p_2(1)$, $p_1(1)+p_2(2)$, $p_1(3)+p_2(2)$. Since the sum of the $R_0$-row of $P$, $1+n_1+n_2+n_3$, equals $n$ (the number of vertices of the scheme) and the sum of any other row of $P$ is zero {(see Corollary~\ref{cq})}, these eigenvalues are respectively equal to $n-n_3-1$, $-1-p_3(1)$, $-1-p_3(2)$, $-1-p_3(1)$. This yields that the graph $G_{\{1,2\}}=(X,\{R_1,R_2\})$ has at most $3$ distinct eigenvalues, namely, $n-n_3-1$, $-1-p_3(1)$, $-1-p_3(2)$. Thus, the graph $G_{\{1,2\}}=(X,\{R_1,R_2\})$ cannot generate the scheme. The same conclusions arise if we choose the remaining graphs $G_{\{1,3\}}=(X,\{R_1,R_3\})$ and $G_{\{2,3\}}=(X,\{R_2,R_3\})$.

{\sc Case 2}. Suppose that $\XXi$ has a graph $G_{i}=(X,R_i)$ which is the disjoint union of connected strongly-regular graphs with the same parameters {which are not complete graphs}. We may assume $i=1$. For $r>0$, by Proposition~\ref{Gp} each of $G_{\{1,3\}}$ and $G_{\{2,3\}}$ generate the scheme. For $r=0$, {in the next few lines,} we prove that only $G_{\{2,3\}}=(X,\{R_2, R_3\})$ has $4$ distinct eigenvalues, thus generating the scheme.

Assume $r=0$. Then, by \eqref{r_zero}, the first eigenmatrix $P$ is as follows:
\[P= \,
\begin{blockarray}{cccc}
R_0 & R_1 & R_2 & R_3\\
\begin{block}{(cccc )}
1& k& -1 + k - \lambda& (-1 + N) (2 k - \lambda)\\
1& k& -1 + k - \lambda&- (2 k - \lambda) \\
1& 0& -1 & 0 \\
1&-k+\lambda& -1 + k -\lambda& 0\\
\end{block}
\end{blockarray}\, .
\]
In this case, the graph $G_{\{1,3\}}=(X,\{R_1, R_3\})$ has 3 (distinct by \eqref{r_zero}) eigenvalues, i.e., $k + (-1 + N) (2k -\lambda)$, $-k + \lambda$, $0$; so it cannot generate the scheme.  Then, consider the graph $G_{\{2,3\}}=(X,\{R_2, R_3\})$, whose eigevalues are $-1 - k + N (2 k - \lambda)$, $-1 - k$, $-1$, $-1 + k - \lambda$. The last three are all different from each other as $k>0$, $w=2k-\lambda>0$, and $-s=k-\lambda>0$ (see \eqref{r_zero}). Now, comparing the first eigenvalue with each of the others, we never get an equality since $N (2 k - \lambda)=N w >0$, $k<N w$, and $(-1+N) w >0$, respectively (by $N>1$, $N \in \NN$, and \eqref{r_zero}). This means that the graph $G_{\{2,3\}}=(X,\{R_2, R_3\})$ has 4 distinct eigenvalues, and so it generates the scheme.

{\sc Case 3}. Suppose that $\XXi$ has a graph $G_{i}=(X,R_i)$ having $4$ distinct eigenvalues. Then, $G_{i}$ generates $\XXi$.
\end{proof}



\subsection{Non-symmetric $\boldsymbol{3}$-class association schemes}

Now, we cnsider the non-symmetric case.

In \cite{GCs}, {\sc Goldbach} found the general structure of the first eigenmatrix $P$ of a non-symmetric $3$-class scheme.

\begin{lemma}[{\cite[Theorem 2.3]{GCs}}]\label{gold} Let $\XXi$ denote a non-symmetric $3$-class scheme with $n$ points, intersection numbers $p^k_{ij}$ $(0\leq k,i,j\leq 3)$, valencies $n_i=p^0_{ii}$ $(0\leq i \leq 3)$, and multiplicities $m_i$ $(0\leq i \leq 3)$. Then, the first eigenmatrix $P$ of the scheme $\XXi$ has the following form:

\begin{equation}\label{PGold}
P= \,
\begin{blockarray}{cccc}
R_0 & R_1 & R_2=R_1^{\top} & R_3\\
\begin{block}{(cccc)}
1 & n_1 & n_1 & n_3\\
1 & p_1(1) & \overline{p_1(1)} & p_3(1)\\
1 & \overline{p_1(1)} & p_1(1) & p_3(1)\\
1 & p_1(3)& p_1(3) & p_3(3)\\
\end{block}
\end{blockarray}\, ,
\end{equation}

where $p_1(1)=\frac{1}{2}(p^1_{11}-p^2_{11}+{\rm i}\sqrt{\frac{n n_1}{m_1}}) \in {\mathbb C \setminus \mathbb{R}}$ and $p_1(3)=\frac{n_1}{n_3}p^1_{33}-p^1_{23} \in \mathbb{Q}$. 

\end{lemma}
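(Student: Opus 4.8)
The plan is to read every entry of $P$ off from three structural facts: transposition acts on the relations, entrywise complex conjugation acts on the common eigenspaces, and the eigenvalues of $A_1$ are the eigenvalues of the intersection matrix $L_1$.

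First I would fix the labelling. Since $\XXi$ is commutative, $R_i\mapsto R_i^\top$ is an involution of $\{R_0,R_1,R_2,R_3\}$ fixing $R_0$; as $\XXi$ is not symmetric this involution is nontrivial, and a nonidentity involution of the three-element set $\{R_1,R_2,R_3\}$ must swap exactly one pair and fix the third. Labelling so that $R_2=R_1^\top$ and $R_3=R_3^\top$, Lemma~\ref{dl} gives $n_2=n_1$ (a $01$-matrix and its transpose have equal row sums), so the $V_0$-row of $P$ is $(1,n_1,n_1,n_3)$, while $A_3$ real symmetric forces $p_3(j)\in\RR$ for all $j$. Next I would determine the remaining rows by conjugation. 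Because $A_2=A_1^\top=\ol{A_1}^\top$ and each idempotent satisfies $\ol{E_j}^\top=E_j$, applying conjugate-transpose to $A_1E_j=p_1(j)E_j$ yields $A_2E_j=\ol{p_1(j)}E_j$; hence the $R_2$-column of $P$ is the entrywise conjugate of the $R_1$-column. The rows of $P$ are precisely the algebra characters $\chi_j\colon A_i\mapsto p_i(j)$ of $\M$, and entrywise conjugation permutes them, fixing the $V_0$-row. As $\XXi$ is not symmetric, $A_1\neq A_1^\top$ has a non-real eigenvalue, so this permutation has a genuine $2$-cycle; on the three nontrivial rows an involution with a $2$-cycle must fix the remaining one. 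Calling the $2$-cycle $V_1\leftrightarrow V_2$ and the fixed row $V_3$, I read off the displayed matrix, with $p_1(1)\in\CC\setminus\RR$, $p_2(1)=\ol{p_1(1)}$, $p_3(1)=p_3(2)\in\RR$, and $p_1(3)=\ol{p_1(3)}\in\RR$; the conjugate eigenspaces have equal dimension, so $m_1=m_2$.

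With the shape in hand, I would compute $p_1(1)$. Evaluating $A_1^2=\sum_h p^h_{11}A_h$ on $V_1$ and on $V_2$ and subtracting (the $A_3$-terms cancel since $p_3(1)=p_3(2)$, and $p^0_{11}=0$), the resulting identity factors as $\big(p_1(1)+\ol{p_1(1)}\big)\big(p_1(1)-\ol{p_1(1)}\big)=(p^1_{11}-p^2_{11})\big(p_1(1)-\ol{p_1(1)}\big)$; since $p_1(1)\neq\ol{p_1(1)}$ I cancel the common factor to get $2\,\mathrm{Re}\,p_1(1)=p^1_{11}-p^2_{11}$. For the imaginary part I would use that $A_1-A_2=A_1-A_1^\top$ is real skew-symmetric with disjoint $01$-supports, so its squared Frobenius norm is $\sum_{x,y}\big((A_1)_{xy}-(A_2)_{xy}\big)^2=|R_1|+|R_2|=2nn_1$, while spectrally $\trace\big((A_1-A_2)(A_1-A_2)^\top\big)=-\trace\big((A_1-A_2)^2\big)=8m_1\big(\mathrm{Im}\,p_1(1)\big)^2$ (the eigenvalues on $V_1,V_2$ are $\pm 2i\,\mathrm{Im}\,p_1(1)$ with multiplicity $m_1=m_2$, and $0$ on $V_0,V_3$). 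Equating the two gives $\mathrm{Im}\,p_1(1)=\tfrac12\sqrt{nn_1/m_1}$, the sign being fixed by the choice of which eigenspace is called $V_1$; this also reconfirms $p_1(1)\in\CC\setminus\RR$.

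Finally, for $p_1(3)$ I would evaluate $\trace L_1$ two ways: as the sum of the eigenvalues of $A_1$, namely $n_1+p_1(1)+\ol{p_1(1)}+p_1(3)$, and as $\sum_j(L_1)_{jj}=\sum_j p^j_{1j}=p^1_{11}+p^2_{12}+p^3_{13}$ (using $p^0_{10}=0$). Substituting the already-found value of $p_1(1)+\ol{p_1(1)}$ yields a linear expression for $p_1(3)$ in intersection numbers, which I then reduce to the stated $\tfrac{n_1}{n_3}p^1_{33}-p^1_{23}$ using commutativity $p^h_{ij}=p^h_{ji}$ together with the standard triangle identities $n_k\,p^{k'}_{ij}=n_i\,p^{i'}_{jk}$ (obtained from reading $\trace(A_iA_jA_k)$ cyclically); in particular $n_1p^1_{33}=n_3p^3_{23}$ makes the two forms agree, and rationality is then immediate since all intersection numbers are non-negative integers and $n_3>0$. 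The remaining real entries $p_3(1)$ and $p_3(3)$ follow from the vanishing nontrivial row sums of Corollary~\ref{cq}. I expect the conjugation argument to settle the shape cleanly; the genuine obstacle is this last bookkeeping — selecting the right combination of triangle identities and verifying that the expression produced by $\trace L_1$ really collapses to $\tfrac{n_1}{n_3}p^1_{33}-p^1_{23}$.
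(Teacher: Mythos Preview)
The paper does not give its own proof of this lemma: it is quoted verbatim as \cite[Theorem~2.3]{GCs} and used as a black box in the proof of Theorem~\ref{CaseAsym}. So there is no ``paper's proof'' to compare against; what you have written is a self-contained argument for a result the authors simply import.

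Your argument is correct. The shape of $P$ is pinned down exactly as you say: transposition is a nontrivial involution on $\{R_1,R_2,R_3\}$, forcing $R_2=R_1^\top$ and $R_3=R_3^\top$; and since the $A_i$ are real, entrywise conjugation sends $E_j$ to some $E_{j'}$ with $p_i(j')=\ol{p_i(j)}$, giving an involution on $\{V_1,V_2,V_3\}$ that must swap a pair (because $A_1$, being real normal but not symmetric, has a non-real eigenvalue) and fix the third. Your computation of $\mathrm{Re}\,p_1(1)$ via $A_1^2$ on $V_1$ minus $V_2$ is clean, and the Frobenius-norm trick for $\mathrm{Im}\,p_1(1)$ is a nice way to avoid the orthogonality relations; note that $m_1=m_2$ is needed there and follows from $\ol{E_1}=E_2$. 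For the last step, your trace-of-$L_1$ approach does collapse to the stated form: from $n_1=\sum_\ell p^2_{1\ell}$ one gets $p_1(3)=p^3_{13}-p^2_{13}$, and then $p^2_{13}=p^{1}_{32}=p^1_{23}$ (transpose plus commutativity) together with $n_3\,p^3_{13}=n_1\,p^2_{33}=n_1\,p^1_{33}$ (the cyclic trace identity plus $p^2_{33}=p^1_{33}$) finishes it. So the ``bookkeeping obstacle'' you flag is real but entirely manageable with the identities you name.
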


\begin{remark}\label{Prmk}
\emph{With reference to Lemma \ref{gold}, let us make some considerations on the first eigenmatrix $P$. Since the sum of every row of $P$ except the $V_0$-row is zero {(see Corollary~\ref{cq})}, it turns out that $p_3(1)=p^2_{11}-p^1_{11}-1 \in \mathbb{Z}$ and $p_3(3)=-1-2p_1(3)\in \mathbb{Q}$. Furthermore, as $p^1_{11}-p^2_{11}+p^1_{13}-p^1_{23}=-1$ (see \cite[Lemma 2.4]{GCs}), we can write $p_3(1)=p^1_{13}-p^1_{23}$. In the end, note that $n_1\neq p_1(1)$ and $p_1(3)\neq p_1(1)$ since $n_1\in \NN$, $p_1(1)\in \CC \setminus \RR$, and $p_1(3)\in \QQ$.}
\end{remark}

To make the next arguments clearer, let us recall the following definition from \cite[page~79]{BI}. 
An association scheme $(X, \{R_i\}^{d}_{i=0})$ is said to be \emph{primitive} if every graph $G_{i}=(X, R_i)$ $(1\leq i \leq d)$ is connected; otherwise, it is said to be \emph{imprimitive}.

\begin{theorem}
\label{CaseAsym}
Let $\XXi$ be a non-symmetric $3$-class scheme. Then, there always exists a directed graph which generates the scheme $\XXi$.
\end{theorem}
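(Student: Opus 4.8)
The plan is to exhibit, for every non-symmetric $3$-class scheme $\XXi$, an explicit \emph{graph in the scheme} $G_\Phi=(X,\{R_i\}_{i\in\Phi})$ that generates $\M$, and to detect it through Proposition~\ref{do}: by that criterion it suffices to find a nonempty $\Phi\subseteq\{1,2,3\}$ for which the column vector $\sum_{i\in\Phi}(P)_{*i}$ has $4$ distinct entries, where $P$ is the first eigenmatrix described by {\sc Goldbach} in Lemma~\ref{gold}. My first move is to read off the spectrum of the candidate $G_1=(X,R_1)$ from the $R_1$-column of \eqref{PGold}, namely $\{n_1,\,p_1(1),\,\overline{p_1(1)},\,p_1(3)\}$. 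Using that $p_1(1)\in\CC\setminus\RR$ while $n_1\in\NN$ and $p_1(3)\in\QQ$ (Lemma~\ref{gold} and Remark~\ref{Prmk}), I would observe that $p_1(1)$ and $\overline{p_1(1)}$ are distinct from each other and from both real numbers $n_1,p_1(3)$; hence the \emph{only} coincidence that could spoil the argument is $n_1=p_1(3)$.

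The heart of the proof is therefore to decide when $n_1=p_1(3)$, and I would do this by a dichotomy on the strong connectivity of $G_1$. Since $R_2=R_1^\top$ forces $n_2=n_1$, the digraph $G_1$ has equal in- and out-degree $n_1$ at every vertex; such a balanced regular digraph is strongly connected if and only if it is weakly connected. In the case where $G_1$ is strongly connected, its adjacency matrix $A_1$ is irreducible (Lemma~\ref{gb}), so by the Perron--Frobenius Theorem~\ref{gc} the valency $n_1$ is a \emph{simple} eigenvalue of maximal modulus; were $n_1=p_1(3)$, the eigenvalue $n_1$ would also be attained on the eigenspace $V_3$ and hence have multiplicity $\ge m_0+m_3\ge 2$, a contradiction. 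Thus $n_1\ne p_1(3)$, the four eigenvalues of $G_1$ are distinct, and $G_1$ generates $\M$ by Proposition~\ref{do} (equivalently Corollary~\ref{ds}).

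It remains to treat the case where $G_1$ is not strongly connected, which I expect to be the main obstacle. Here weak disconnectivity of $G_1$ means the undirected graph $G_{\{1,2\}}=(X,R_1\cup R_2)$ is disconnected; a standard closure argument then shows that $\{R_0,R_1,R_2\}$ is a closed subset, so $R_0\cup R_1\cup R_2$ is an equivalence relation and $R_3$ joins distinct classes, i.e.\ $G_3$ is a (connected) complete multipartite graph. Consequently $G_{\{1,3\}}=(X,R_1\cup R_3)$ contains the connected graph $G_3$ and is again balanced regular of degree $n_1+n_3$, hence strongly connected. Its eigenvalues, obtained by summing the $R_1$- and $R_3$-columns of \eqref{PGold}, are $n_1+n_3,\ p_1(1)+p_3(1),\ \overline{p_1(1)}+p_3(1),\ p_1(3)+p_3(3)$; since $p_3(1)\in\ZZ$ and $p_3(3)\in\QQ$ (Remark~\ref{Prmk}), exactly as before the only possible collision is $n_1+n_3=p_1(3)+p_3(3)$, which Perron--Frobenius applied to the irreducible $A_1+A_3$ rules out. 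Hence $G_{\{1,3\}}$ has $4$ distinct eigenvalues and generates $\M$. The delicate points to get right are the balanced-regularity lemma (\emph{weakly connected plus in-degree equal to out-degree implies strongly connected}) and the closure argument identifying the imprimitivity system; once these are in place, the eigenvalue bookkeeping is routine.
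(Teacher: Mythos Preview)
Your argument is correct and reaches the same conclusion, but the route differs from the paper's. The paper splits into \emph{primitive} versus \emph{imprimitive} and then, in the imprimitive case, invokes Goldbach's identity $p^1_{33}(p^1_{13}+p^1_{23})=0$ from \cite[Theorem~4.1]{GCs} to split once more into the sub-cases $p^1_{33}=0$ (where $G_1$ still generates because $p_1(3)=-p^1_{23}\le 0<n_1$) and $p^1_{13}=p^1_{23}=0$ (where $p_1(3)=n_1$ forces a switch to $G_{\{1,3\}}$, whose four eigenvalues are checked by hand). In the primitive case the paper also passes through the \emph{symmetric closure} $(X,R_1\cup R_2)$ and the multiplicity-one property of its valency, quoting \cite[Theorem~2.2]{GCs}.

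Your dichotomy on the strong connectivity of $G_1$ is cleaner and essentially self-contained: Perron--Frobenius applied directly to $A_1$ (respectively $A_1+A_3$) replaces all of the cited Goldbach machinery, and the balanced-digraph lemma together with the closure argument (that the components of $G_{\{1,2\}}$ form a system of imprimitivity, forcing $R_3$ to be the complete multipartite relation) replaces the explicit intersection-number casework. What the paper's approach buys in return is sharper bookkeeping: it identifies precisely in which imprimitive sub-case $G_1$ itself still works and in which one must pass to $G_{\{1,3\}}$, whereas your proof only guarantees that \emph{some} graph in the scheme generates. Both the Eulerian/balanced lemma and the closure argument you flag as delicate are indeed standard; the latter is just the observation that $((A_1+A_2)^\ell)_{xy}$ depends only on the class of $(x,y)$, so ``same component in $G_{\{1,2\}}$'' is a union of relations, and if it contained $R_3$ the graph would be connected.
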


\begin{proof} Since $\XXi=(X, \{R_i\}^{3}_{i=0})$ is a non-symmetric $3$-class scheme, its first eigenmatrix $P$ looks like the one in (\ref{PGold}). Thus, we will use here the same notation as in Lemma \ref{gold} as well as the contents of Remark \ref{Prmk}. Two cases arise: the scheme $\XXi$ is primitive or imprimitive.

We first suppose $\XXi$ is primitive, i.e., each of its relations but the diagonal one is connected. 
Recall that the association scheme $(X, \ol{\R})$, in which $\ol{\R}=\{R\cup R^{\top} \mid R\in \R\}$, is said to be the \emph{symmetric closure} of the scheme $(X, \R)$. In our case, $(X, \{R_1\cup R_2, R_3\})$ is the symmetric closure of $\XXi$. By \cite[Theorem~2.2]{GCs}, a $3$-class scheme is primitive if and only if its symmetric closure is primitive. This implies that the (undirected) graph $(X, R_1\cup R_2)$ is connected with eigenvalues $2 n_1$, $p_1(1)+\ol{p_1(1)}$, and $2p_1(3)$ (see \eqref{PGold}). It is known that the number of connected components of a regular (undirected) graph is the multiplicity of its valency (see \cite[page~1]{BVM}). Since the valency of the graph $(X, R_1\cup R_2)$ is $2n_1$, we have that $2n_1\ne 2p_1(3)$, i.e., $n_1\neq p_1(3)$. It follows that the entries in the $R_1$-column of $P$ are distinct, that is, the graph $G_{1}=(X, R_1)$ has $4$ distinct eigenvalues, and so it generates the scheme. The same arguments hold for $G_{2}=(X,R_2)$. Observe that $G_{3}=(X,R_3)$ has at most $3$ distinct eigenvalues, and so it cannot generate the scheme.

We explore now the case in which $\XXi$ is imprimitive. According to \cite[Theorem~4.1]{GCs}, this means that $p^1_{33}(p^1_{13}+p^1_{23})=0$. Since the intersection numbers $p^k_{ij}$ are non-negative integers by definition, then either $p^1_{33}=0$ or $p^1_{13}=p^1_{23}=0$, which never occur together, otherwise $n_3=p^1_{31}+p^1_{32}+p^1_{33}$ (see equation \eqref{ck}) would be zero.

If $p^1_{33}=0$, the first eigenmatrix $P$ appears as follows:
\[
P= \,
\begin{blockarray}{cccc}
R_0 & R_1 & R_2=R_1^{\top} & R_3\\
\begin{block}{(cccc)}
1 & n_1 & n_1 & n_3\\
1 & p_1(1) & \overline{p_1(1)} & p_3(1)\\
1 & \overline{p_1(1)} & p_1(1) & p_3(1)\\
1 & -p^1_{23}& -p^1_{23} & -1+ 2 p^1_{23}\\
 \end{block}
\end{blockarray}\, ,
\]
where $n_1\neq -p_{23}^1$. Then, $G_{i}=(X,R_i)$, $i\in\{1,2\}$, having $4$ distinct eigenvalues, generates the scheme. Note that $G_{3}=(X, R_3)$ is disconnected.

If $p^1_{13}=p^1_{23}=0$, then $n_3=p^1_{33}$ and $p_3(1)=0$. The first eigenmatrix $P$ is now the following:
\[
P= \,
\begin{blockarray}{cccc}
R_0 & R_1 & R_2=R_1^{\top} & R_3\\
\begin{block}{(cccc)}
1 & n_1 & n_1 & n_3\\
1 & p_1(1) & \overline{p_1(1)} & 0\\
1 & \overline{p_1(1)} & p_1(1) & 0\\
1 & n_1& n_1 & -1-2 n_1\\
 \end{block}
\end{blockarray}\, .
\]
None among the graphs $G_{i}=(X,R_i)$ $(1\leq i\leq 3)$ can generate the scheme, as each of them has exactly $3$ distinct eigenvalues. Let us then consider the graph $G_{\{1,3\}}=(X, \{R_1,R_3\})$, whose eigenvalues are obtained by adding the $R_1$-column and the $R_3$-column of $P$. Thus, this graph has $4$ distinct eigenvalues, namely $n_1+n_3$, $p_1(1)$, $\overline{p_1(1)}$, $-1-n_1$, thus generating the scheme. The same holds if we consider $G_{\{2,3\}}=(X, \{R_2,R_3\})$. Note that $G_{\{1,2\}}=(X, \{R_1,R_2\})$ has distinct eigenvalues $2n_1$ and $-1$; hence it cannot generate the scheme.
\end{proof}

\subsection{Proof of Theorem~\ref{ph}} 

Let $\XXi$ be a commutative $3$-class association scheme. If $\XXi$ is symmetric, then the result follows from Theorem \ref{CaseSym}. Otherwise, $\XXi$ is non-symmetric and the result follows from Theorem \ref{CaseAsym}.


\section{The distance-faithful intersection diagram}
\label{pa}

Let $\M$ denote the Bose--Mesner algebra of a commutative $d$-class association scheme $\XXi$ and $A\in\M$ denote a $01$-matrix which generates $\M$. In this section, we study combinatorial properties of $\G=\G(A)$. We prove that, whenever a $01$-matrix $A\in\M$ represents a (strongly) connected (directed) graph, then for every vertex $x\in X$ there exists an $x$-distance-faithful intersection diagram of an equitable partition $\Pi_x$ with $d+1$ cells. Moreover, the structure of the $x$-distance-faithful intersection diagram does not depend on $x$ (see Theorem~\ref{pi}). We use this fact to describe combinatorial properties of a graph which generates a commutative $3$-class association scheme (see Corollary~\ref{we}).

\begin{lemma}
\label{pg}
Let $\M$ denote the Bose--Mesner algebra of a commutative $d$-class association scheme $\XXi=(X,\R)$ with adjacency matrices $\{A_i\}_{i=0}^d$.
For a given $x\in X$ we define the partition $\Pi_x=\{\P_0(x),\P_1(x),\ldots,\P_d(x)\}$ of $X$ in the following way 
$$
\P_i(x)=\{z\mid  (A_i)_{xz}=1\}\qquad (0\le i\le d).
$$
Let $A$ denote an arbitrary $01$-matrix in $\M$, and consider the (directed) graph $\G=\G(A)$. If $\G$ is a (strongly) connected (directed) graph then in $\G$ all vertices in $\P_i(x)$ are at the same distance from $x$.
\end{lemma}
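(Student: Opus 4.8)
The plan is to exploit the fact that powers of $A$ stay inside $\M$, so that their entries are constant on each relation. First I would observe that, since $\M$ is an algebra and $A\in\M$, every power $A^\ell$ $(\ell\ge 0)$ again lies in $\M=\Span\{A_0,A_1,\ldots,A_d\}$. Hence for each $\ell$ there exist scalars $c^{(\ell)}_h$ $(0\le h\le d)$ with
\[
A^\ell=\sum_{h=0}^d c^{(\ell)}_h A_h .
\]

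Next I would fix $x\in X$ and a cell $\P_i(x)$, and take two arbitrary vertices $z,z'\in\P_i(x)$, so that $(x,z),(x,z')\in R_i$. Because the $A_h$ are the $01$-matrices of the relations $R_h$, we have $(A_h)_{xz}=\delta_{hi}=(A_h)_{xz'}$, and therefore
\[
(A^\ell)_{xz}=\sum_{h=0}^d c^{(\ell)}_h (A_h)_{xz}=c^{(\ell)}_i=(A^\ell)_{xz'}\qquad(\ell\ge 0).
\]
Thus the $(x,z)$-entry of $A^\ell$ depends only on the index $i$ of the cell and on $\ell$, and not on the chosen representative of $\P_i(x)$.

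Finally, I would translate this into a statement about distances. By Lemma~\ref{hB} the entry $(A^\ell)_{xz}$ counts the walks of length $\ell$ from $x$ to $z$, and since $A$ is a $01$-matrix (so all entries of every $A^\ell$ are nonnegative), the distance $\partial(x,z)$ is exactly the smallest $\ell$ for which $(A^\ell)_{xz}>0$. As the sequence $\big((A^\ell)_{xz}\big)_{\ell\ge 0}$ coincides with $\big((A^\ell)_{xz'}\big)_{\ell\ge 0}$ for any two $z,z'\in\P_i(x)$, the first index at which it becomes positive is the same; the strong connectivity of $\G$ guarantees that this index is finite for every vertex. Hence $\partial(x,z)=\partial(x,z')$, i.e., all vertices of $\P_i(x)$ lie at a common distance from $x$.

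I do not expect a serious obstacle here: the whole argument rests on the single observation that $A^\ell\in\M$ forces its entries to be constant on the relations of $\XXi$. The only point needing a little care is the reduction from \emph{equal walk-counts for all lengths} to \emph{equal distance}, which is immediate once one recalls that the distance is read off as the first nonzero power and that strong connectivity makes every such first index finite.
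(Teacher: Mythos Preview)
Your proof is correct and follows essentially the same approach as the paper: write $A^\ell$ in the basis $\{A_h\}$, deduce that $(A^\ell)_{xz}$ depends only on the cell $\P_i(x)$ containing $z$, and conclude equal distances from equal walk-count sequences. The only cosmetic difference is that the paper phrases the last step as a contradiction (assuming $\partial(x,z)>\partial(x,w)=\ell$ and comparing $(A^\ell)_{xz}$ with $(A^\ell)_{xw}$), whereas you read off the distance directly as the first positive index; both are the same observation.
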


\begin{proof}
We first show that for any $z,w\in\P_i(x)$ the number of walks of length $\ell$ from $x$ to $z$ is the same as the number of walks of length $\ell$ from $x$ to $w$ (i.e., $(A^\ell)_{xz}=(A^\ell)_{xw}$ $(0\le \ell\le d)$). Since $\{A_h\}_{h=0}^d$ is a basis of $\M$, there exist scalars $\alpha_{ij}$ $(0\le i,j\le d)$ such that
$$
A^\ell=\sum_{j=0}^d \alpha_{\ell j} A_j
\qquad
(0\le \ell\le d).
$$
For any $z,w\in\P_i(x)$, we have $(A_i)_{xz}=(A_i)_{xw}=1$ and $(A_j)_{xz}=(A_j)_{xw}=0$ if $j\ne i$. This yields $(A^\ell)_{xz}=\alpha_{\ell i}=(A^\ell)_{xw}$.

We now prove our claim by a contradiction. Assume that $z,w\in\P_i(x)$ and that $\partial(x,z)>\partial(x,w)=\ell$. Then, we  have $(A^{\ell})_{xw}\ne 0$ but $(A^{\ell})_{xz}= 0$, a contradiction.
\end{proof}

\begin{lemma}
\label{pl}
Let $\M$ denote the Bose--Mesner algebra of a commutative $d$-class association scheme $\XXi=(X,\R)$ with the adjacency matrices $\{A_i\}_{i=0}^d$. Pick $x,y\in X$ and define the partitions $\Pi_x=\{\P_0(x),\P_1(x),\ldots,\P_d(x)\}$ and $\Pi_y=\{\P_0(y),\P_1(y),\ldots,\P_d(y)\}$ of $X$ in the following way: 
$$
\P_i(x)=\{z\mid  (A_i)_{xz}=1\},\quad
\P_i(y)=\{z\mid  (A_i)_{yz}=1\}
\qquad (0\le i\le d).
$$
Let $A$ denote an arbitrary $01$-matrix in $\M$, and consider the (directed) graph $\G=\G(A)$. If $\G$ is a (strongly) connected (directed) graph then for any $i,j$ $(0\le i,j\le d)$ there exist scalars $D_{ij}^{\ra}$ such that in $\G$ the following hold:
$$
|\G_1^{\ra}(z)\cap \P_j(x)|=D^{\ra}_{ij} \qquad\mbox{for every $z\in\P_i(x)$}
$$
and
$$
|\G_1^{\ra}(w)\cap\P_j(y)|= D^{\ra}_{ij} \qquad\mbox{for every $w\in\P_i(y)$}.
$$
\end{lemma}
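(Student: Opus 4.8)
The plan is to recast the combinatorial count $|\G^{\ra}_1(z)\cap\P_j(x)|$ as a single entry of a matrix that lies in $\M$, and then to exploit the elementary fact that every matrix $B\in\M$ is \emph{constant on each relation}, in the sense that $(B)_{pq}$ depends only on the index $h$ with $(p,q)\in R_h$. This holds because $B=\sum_{h=0}^d c_h A_h$ while the $A_h$ are mutually disjoint $01$-matrices that partition $X\times X$, so $(B)_{pq}=c_h$ whenever $(p,q)\in R_h$. The whole argument will then be purely algebraic, using only that $A\in\M$; in particular it will need neither strong connectivity nor any finer property of $\G$.

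Next I would carry out the key identity. Since $A$ is a $01$-matrix, for $z\in\P_i(x)$ and any $j$,
$$
|\G^{\ra}_1(z)\cap\P_j(x)| = \sum_{u\in X} (A)_{zu}(A_j)_{xu} = \sum_{u\in X}(A)_{zu}(A_j^\top)_{ux}=(A A_j^\top)_{zx}.
$$
Because $\M$ is closed under the transpose (so $A_j^\top\in\M$) and under multiplication, the matrix $A A_j^\top$ again lies in $\M$. Writing $A A_j^\top=\sum_{h=0}^d\gamma_h A_h$, the observation above tells us that $(A A_j^\top)_{zx}$ equals the single scalar $\gamma_{i'}$, where $R_{i'}=R_i^\top$ is the unique relation containing $(z,x)$ (indeed $(x,z)\in R_i$ forces $(z,x)\in R_i^\top$). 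This value is independent of which $z\in\P_i(x)$ we picked, so the partition $\Pi_x$ is equitable and we may set $D^{\ra}_{ij}:=\gamma_{i'}$.

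Finally, independence from the base vertex falls out of the very same formula: for $w\in\P_i(y)$ we have $(y,w)\in R_i$, hence $(w,y)\in R_i^\top=R_{i'}$, and therefore $|\G^{\ra}_1(w)\cap\P_j(y)|=(A A_j^\top)_{wy}=\gamma_{i'}=D^{\ra}_{ij}$ — the \emph{same} constant, since $A A_j^\top$ is one fixed matrix whose entries are determined by the relation of the index pair. The only point requiring genuine care, and the step I would write out most explicitly, is the transpose bookkeeping: because $\G$ is directed, $\G^{\ra}_1(z)$ is governed by the $z$-th row of $A$ whereas $\P_j(x)$ is governed by the $x$-th row of $A_j$, so the correct product is $A A_j^\top$ and the relevant relation is $R_i^\top$ rather than $R_i$. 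Conflating these (or forgetting the transpose in the symmetric-looking case) is the easy mistake to guard against.
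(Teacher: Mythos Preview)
Your proof is correct and is essentially the same as the paper's: both express $|\G^{\ra}_1(z)\cap\P_j(x)|$ as the $(z,x)$-entry of $AA_j^\top\in\M$, expand in the basis $\{A_h\}$, and read off the coefficient indexed by $R_i^\top$; the paper just gives the transpose matrices explicit names ($A_k:=A_j^\top$, $A_\ell:=A_i^\top$) where you write $A_j^\top$ and $R_{i'}$ directly. Your remark that strong connectivity is not actually used in this step is also accurate---it is needed elsewhere (for the distance-faithful claim) but not for the equitability itself.
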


\begin{proof}
We give a proof for a directed graph. The proof for an undirected graph is similar.

Pick some $i,j$ $(0\le i,j\le d)$ and let $k$ and $\ell$ denote the unique indices such that $A_k=A_j^\top$ and $A_i^\top=A_\ell$ (such indices exists since $A_i^\top,A_j^\top\in\{A_0,A_1,\ldots,A_d\}$). Note that $A_k^\top=A_j$ and $A_\ell^\top=A_{i}$. Since $AA_k\in\Span\{A_0,A_1,\ldots,A_d\}$, there exist scalars $\alpha^h_{k}$ $(0\le h\le d)$ such that
\begin{equation}
\label{7b}
AA_k=\sum_{h=0}^d \alpha^h_{k} A_h.
\end{equation}
Pick $x,y\in X$ and consider the partitions $\Pi_x$ and $\Pi_y$. We show that for any $z\in\P_i(x)$ and $w\in\P_i(y)$, we have $|\G_1^{\ra}(z)\cap \P_j(x)|=\alpha^\ell_{k}$ and $|\G_1^{\ra}(w)\cap \P_j(y)|=\alpha^\ell_{k}$.

Note that for any matrix $B$, $(B)_{zx}=(B^\top)_{xz}$. From the left-hand side of \eqref{7b}, we have
\begin{align*}
(AA_k)_{zx}&=\sum_{u\in X} (A)_{zu} (A_k)_{ux}\\
&=\sum_{u\in X} (A)_{zu} (A_j)_{xu}\\
&= |\G_1^{\ra}(z)\cap \P_j(x)|
\end{align*}
and
\begin{align*}
(AA_k)_{wy}&=\sum_{u\in X} (A)_{wu} (A_k)_{uy}\\
&=\sum_{u\in X} (A)_{wu} (A_j)_{yu}\\
&= |\G_1^{\ra}(w)\cap \P_j(y)|.
\end{align*}
For the same choices of $z\in\P_i(x)$ and $w\in\P_i(y)$ as above, from the right-hand side of \eqref{7b}, we have
\begin{align*}
(AA_k)_{zx} &= ({AA_k})^\top_{xz}\\
&= \left({\sum_{h=0}^d \alpha^h_{k} A_h}\right)^\top_{xz}\\
&= \sum_{h=0}^d \alpha^h_{k} (A_h^\top)_{xz}\\
&= \alpha^\ell_{k} (A_i)_{xz}\qquad\mbox{(where $(A_\ell)^\top=A_{i}$)}\\
&= \alpha^\ell_{k}
\end{align*}
and
\begin{align*}
(AA_k)_{wy} &= ({AA_k})^\top_{yw}\\
&= \left({\sum_{h=0}^d \alpha^h_{k} A_h}\right)^\top_{yw}\\
&= \sum_{h=0}^d \alpha^h_{k} (A_h^\top)_{yw}\\
&= \alpha^\ell_{k} (A_i)_{yw}\qquad\mbox{(where $(A_\ell)^\top=A_{i}$)}\\
&= \alpha^\ell_{k}.
\end{align*}
With it, if we define $D_{ij}^{\ra}$ as $\alpha^\ell_{k}$ (the index $i$ uniquely determines $\ell$, and the index $j$ uniquely determines $k$), we get that
$$
|\G_1^{\ra}(z)\cap \P_j(x)|=D^{\ra}_{ij} \qquad\mbox{for every $z\in\P_i(x)$}
$$
and
$$
|\G_1^{\ra}(w)\cap \P_j(y)|= D^{\ra}_{ij} \qquad\mbox{for every $w\in\P_i(y)$.}
$$
\end{proof}

\subsection{Proof of Theorem~\ref{pi}}


In this subsection we prove Theorem~\ref{pi}. The proof is in the same spirit as \cite[Theorem~4.1]{FMPS}. 

Assume that $A$ is a non-symmetric matrix. Using the same notation as in Lemma~\ref{pg}, for a given $x\in X$ we define the partition $\Pi_x$ on the following way: 
$$
\Pi_x=\{\P_0(x),\P_1(x),\ldots,\P_d(x)\},
\qquad\mbox{ where }\quad
\P_i(x)=\{z\mid  (A_i)_{xz}=1\}~(0\le i\le d).
$$
To prove the claim, we need to show that the following (a)--(c) hold.
\begin{enumerate}[label=\rm(\alph*)]
\item All vertices in $\P_i(x)$ are at the same distance from $x$.
\item $|\P_i(x)|=|\P_i(u)|$ $(0\le i\le d)$ for every $x,u\in X$.
\item There exist numbers $D^{\ra}_{ij}$, $D^{\la}_{ij}$ $(0\le i,j\le d)$ such that, for every $x\in X$, $\Pi_x$ is an equitable partition of $\G$ with corresponding parameters $D^{\ra}_{ij}$, $D^{\la}_{ij}$ (which do not depend on $x$).
\end{enumerate}

\medskip
The claim (a) follows immediately from Lemma~\ref{pg}.

\medskip
For the claim (b) first note that every matrix in $\M$ has constant row sums (see Lemma~\ref{dl}). Thus $|\P_i(x)|=\sum_{z\in X} (A_i)_{xz}=\sum_{w\in X} (A_i)_{uw}=|\P_i(u)|$ holds for every $x,u\in X$. (Furthermore, note that the cardinality of $\P_i(x)$ for every $x\in X$ is equal to $|R_i(x)|=|\{z\in X\mid (x,z)\in R_i \}|$ and that $|R_i(x)|=n_i$ where $R_i$ is $i$th relation of the association scheme $\XXi$ and $n_i$ is valency of $R_i$ (see Subsection~\ref{COM})).

\medskip
It is left to prove claim (c). In Lemma~\ref{pl} we showed that for any $i,j$ $(0\le i,j\le d)$ and $x,y\in X$ there exists scalars $D_{ij}^{\ra}$ such that in $\G$, $|\G_1^{\ra}(z)\cap \P_j(x)|=D^{\ra}_{ij}$ holds for every $z\in\P_i(x)$; and that $|\G_1^{\ra}(w)\cap\P_j(y)|= D^{\ra}_{ij}$ holds for every $w\in\P_i(y)$.

For $D^{\la}_{ij}$ we have something similar. Pick $i,j$ $(0\le i,j\le d)$ and $x,y\in X$. First, note that
\begin{equation}
\label{7c}
A_jA=\sum_{h=0}^d \beta^{h}_{j} A_h.
\end{equation}
for some scalars $\beta^h_{j}$ $(0\le h\le d)$. For any $z\in\P_i(x)$ and $w\in\P_i(y)$, from the left-hand side of \eqref{7c}, we have
\begin{align*}
(A_jA)_{xz}&=\sum_{u\in X} (A_j)_{xu} (A)_{uz}\\
&=\sum_{u\in\P_j(x)} (A)_{uz}\\
&=\sum_{u\in\P_j(x)} |\G^{\ra}_1(u)\cap \{z\}|
\end{align*}
and
\begin{align*}
(A_jA)_{yw}&=\sum_{u\in X} (A_j)_{yu} (A)_{uw}\\
&=\sum_{u\in\P_j(y)} (A)_{uw}\\
&=\sum_{u\in\P_j(y)} |\G^{\ra}_1(u)\cap \{w\}|.
\end{align*}
For the same choices of $z\in\P_i(x)$ and $w\in\P_i(y)$, from the right-hand side of \eqref{7c}, we have
\begin{align*}
(A_jA)_{xz}&=\left(\sum_{h=0}^d \beta^{h}_{j} A_h\right)_{xz}\\
&=\beta^{i}_{j}(A_i)_{xz}\\
&=\beta^{i}_{j}
\end{align*}
and
\begin{align*}
(A_jA)_{yw}&=\left(\sum_{h=0}^d \beta^{h}_{j} A_h\right)_{yw}\\
&=\beta^{i}_{j}(A_i)_{yw}\\
&=\beta^{i}_{j}.
\end{align*}
With it, if we define $D_{ij}^{\la}$ as $\beta^{i}_{j}$, we get that
$$
\sum_{u\in\P_j(x)} |\G^{\ra}_1(u)\cap \{z\}|=D^{\la}_{ij} \qquad\mbox{for every $z\in\P_i(x)$,}
$$
and
$$
\sum_{u\in\P_j(y)} |\G^{\ra}_1(u)\cap \{w\}|= D^{\la}_{ij} \qquad\mbox{for every $w\in\P_i(y)$.}
$$
Thus, $\Pi_x$ and $\Pi_y$ are equitable partitions of $\G$ with the same corresponding parameters $D^{\ra}_{ij},D^{\la}_{ij}$ $(0\le i,j\le d)$.

\medskip
For the end of this section, let $\FF_q$ denote the finite field of order $q$, and let $p$ denote a prime number with $q=p^m$. In \cite{QHL} the authors study under which condition the image of $p$-ary function (a function from $\FF_q$ to $\FF_p$) give an association scheme, by studying a specific partition $\FF_q$. It would be interesting to find in which way this partition is connected with our partition from Theorem~\ref{pi}, if any connection exists at all.


\subsection{Some corollaries of Theorem~\ref{pi}}

Theorem~\ref{pi} gives us a useful combinatorial property for a (strongly) connected (directed) graph which `lives' in a $d$-class association scheme. See Corollary~\ref{we} to understand what is happening in a $3$-class association scheme.

Recall that a graph is {\em walk-regular} if the number of closed walks of length $\ell$ rooted at vertex $x$ only depends on $\ell$, for each $\ell\ge 0$ (i.e., the $(A^\ell)_{xx}$ entry for every $x\in X$ only depends on $\ell$).

\begin{corollary}
\label{wf}
Let $\M$ denote the Bose--Mesner algebra of a commutative $d$-class association scheme $\XXi=(X,\R)$. If a (strongly) connected (directed) graph $\G$ `lives' in the association scheme $\XXi$ (i.e., if the adjacency matrix $A$ of $\G$ belongs to $\M$), then $\G$ is a walk-regular graph.
\end{corollary}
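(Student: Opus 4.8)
The plan is to exploit two facts: that $\M$ is an algebra, and that among the standard basis $\{A_0,A_1,\ldots,A_d\}$ only $A_0$ has a nonzero diagonal. First I would observe that, since $A\in\M$ and $\M$ is closed under matrix multiplication by axiom~\ref{cb}, every power $A^\ell$ $(\ell\ge 0)$ again lies in $\M$. Hence for each $\ell$ there are scalars $c^{(\ell)}_0,\ldots,c^{(\ell)}_d$ such that $A^\ell=\sum_{i=0}^d c^{(\ell)}_i A_i$.

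The key step is to read off the diagonal of this expansion. By axioms~\ref{ce} and~\ref{cg} we have $\sum_{i=1}^d A_i = J-I$, whose diagonal is identically zero; since each $A_i$ is a $01$-matrix and hence has nonnegative entries, it follows that $(A_i)_{xx}=0$ for every $x\in X$ whenever $i\ge 1$, while $(A_0)_{xx}=(I)_{xx}=1$. Consequently
$$
(A^\ell)_{xx}=\sum_{i=0}^d c^{(\ell)}_i (A_i)_{xx}=c^{(\ell)}_0,
$$
a quantity that does not depend on the vertex $x$. Because $A$ is a $01$-matrix, Lemma~\ref{hB} identifies $(A^\ell)_{xx}$ with the number of closed walks of length $\ell$ rooted at $x$; so this number depends only on $\ell$, which is exactly the walk-regularity of $\G$.

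I do not expect a genuine obstacle here: the statement reduces to the multiplicative closure of $\M$ together with the vanishing of the diagonals of $A_1,\ldots,A_d$, and the only point requiring a line of care is the latter, which I would justify as above from $\sum_{i=1}^d A_i=J-I$. It is worth remarking that strong connectivity of $\G$ is not actually used in the argument; it is inherited from the standing hypotheses of the section but plays no role in the proof of walk-regularity itself.
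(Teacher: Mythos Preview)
Your argument is correct. It is, however, a different route from the one the paper takes. The paper's proof is the single line ``Immediate from Theorem~\ref{pi}'': it first establishes that the partition $\Pi_x=\{\P_i(x)\}_{0\le i\le d}$ is equitable with parameters independent of $x$, and then reads off walk-regularity from the fact that the intersection diagram is the same around every vertex. Your proof bypasses Theorem~\ref{pi} entirely and works directly with the expansion $A^\ell=\sum_i c^{(\ell)}_i A_i$, extracting the diagonal via $(A_i)_{xx}=\delta_{i0}$. This is essentially the same computation that underlies Lemma~\ref{pg} (where the same expansion is used to show $(A^\ell)_{xz}$ depends only on the cell of $z$), specialized to $z=x$; you are isolating the one ingredient actually needed for walk-regularity rather than invoking the full equitable-partition machinery. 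Your observation that strong connectivity plays no role is also correct: the argument shows more generally that any $01$-matrix in $\M$ has constant diagonal in every power, regardless of connectivity.
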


\begin{proof}
Immediate from Theorem~\ref{pi}.
\end{proof}

\medskip
In Corollary~\ref{wg} we deal with a symmetric $d$-class association scheme.

\begin{corollary}
\label{wg}
Let $\M$ denote the Bose--Mesner algebra of a symmetric $d$-class association scheme $\XXi=(X,\R)$, and $A\in\M$ denote a $01$-matrix. If $\G=\G(A)$ generates $\XXi$ then the following hold.
\begin{enumerate}[label=\rm(\roman*)]
\item For every vertex $x\in X$, there exists an $x$-distance-faithful intersection diagram (of an equitable partition $\Pi_x$) with $d+1$ cells.
\item The structure of the $x$-distance-faithful intersection diagram (of the equitable partition $\Pi_x$) from {\rm (i)} does not depend on $x$.
\item Graph $\G$ does not have an $x$-distance-faithful intersection diagram whose number of cells is less than $d+1$ (i.e., $d+1$ is the smallest number of cells for which there exists an $x$-distance-faithful equitable partition).
\end{enumerate}
\end{corollary}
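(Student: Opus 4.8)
The plan is to read off parts (i) and (ii) directly from Theorem~\ref{pi} and to devote the real work to the minimality claim (iii). First I would note that, since $\XXi$ is symmetric, every matrix of $\M$ is symmetric, so the generating $01$-matrix $A$ is symmetric; because $A$ generates $\M$, Corollary~\ref{ds} gives that $A$ has $d+1$ distinct eigenvalues, and Corollary~\ref{dm}(i) then identifies $\G=\G(A)$ as a \emph{connected} undirected graph. Hence $\G$ satisfies the hypotheses of Theorem~\ref{pi}, and (i), (ii) are exactly its two conclusions, witnessed by the partition $\Pi_x=\{\P_0(x),\ldots,\P_d(x)\}$ with $\P_i(x)=\{z\mid(A_i)_{xz}=1\}$, which has $d+1$ cells and satisfies $\P_0(x)=\{x\}$.

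For (iii) I would prove that $d+1$ is a lower bound on the number of cells of an \emph{arbitrary} $x$-distance-faithful equitable partition $\Pi=\{\Q_0=\{x\},\Q_1,\ldots,\Q_s\}$. Writing $\chi_j$ for the characteristic column vector of $\Q_j$, the equitable property gives $A\chi_j=\sum_{i}d^{\ra}_{ij}\chi_i$, so $W:=\Span\{\chi_0,\ldots,\chi_s\}$ is $A$-invariant with $\dim W=s+1$. Since $\Q_0=\{x\}$ is a singleton, $\chi_0$ is the standard basis vector $e_x$, and $A$-invariance yields $A^\ell e_x\in W$ for all $\ell\ge 0$. It therefore suffices to show that $e_x,Ae_x,\ldots,A^de_x$ are linearly independent, for then $s+1=\dim W\ge d+1$.

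To establish this independence I would exploit that $A$ generates $\M$: by Lemma~\ref{dk} the set $\{A^0,\ldots,A^d\}$ is a basis of $\M$, so in $A^\ell=\sum_{i=0}^d\alpha_{\ell i}A_i$ the coefficient matrix $(\alpha_{\ell i})_{0\le\ell,i\le d}$ is invertible. Reading off the $x$-th column and using that the scheme is symmetric, $(A^\ell e_x)_z=(A^\ell)_{zx}=\alpha_{\ell,i(z)}$, where $i(z)$ is the unique index with $z\in\P_{i(z)}(x)$. A relation $\sum_\ell c_\ell A^\ell e_x=0$ then forces $\sum_\ell c_\ell\alpha_{\ell,i(z)}=0$ for every $z$; since each valency $n_i=p^0_{ii}$ is positive, every column index $i$ occurs as some $i(z)$, so $\sum_\ell c_\ell\alpha_{\ell,i}=0$ for all $i$, and invertibility of $(\alpha_{\ell i})$ gives $c_0=\cdots=c_d=0$.

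The main obstacle will be this independence step, and its cleanest formulation is the observation above: the ``local'' power vectors $A^\ell e_x$ already fill a $(d+1)$-dimensional space precisely because $A$ generates $\M$, so any equitable partition having $\{x\}$ as a cell is forced to have at least $d+1$ cells. Notably, the argument uses distance-faithfulness only through the fact that $\{x\}$ is a singleton cell, not through the finer refinement condition. Combined with part (i), which exhibits a partition attaining exactly $d+1$ cells, this shows that $d+1$ is the minimum and completes the proof.
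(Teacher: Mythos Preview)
Your argument is correct. Parts (i) and (ii) match the paper's approach exactly (invoke Theorem~\ref{pi} after checking connectedness via Corollaries~\ref{ds} and~\ref{dm}).

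For (iii) you take a genuinely different route. The paper argues by contradiction through the quotient matrix: it first notes (via Corollary~\ref{wf}) that $\G$ is walk-regular, then appeals to an external result, \cite[Proposition~4.1]{DF}, asserting that every eigenvalue of $\G$ must appear as an eigenvalue of the quotient matrix of any such equitable partition; a quotient matrix of size $<d+1$ cannot carry $d+1$ distinct eigenvalues, contradiction. Your proof replaces this spectral interlacing step by a direct, self-contained linear-algebra computation: the characteristic vectors of the cells span an $A$-invariant subspace $W$ containing $e_x$, and you show $\dim W\ge d+1$ by proving that $e_x,Ae_x,\ldots,A^de_x$ are linearly independent, reading this off from the invertibility of the change-of-basis matrix $(\alpha_{\ell i})$ between $\{A^\ell\}$ and $\{A_i\}$. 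This avoids the external citation and the explicit walk-regularity detour; it also makes transparent (as you observe) that only the singleton cell $\{x\}$, not the full distance-faithful refinement, is needed for the lower bound. The paper's approach, on the other hand, ties the result to the well-known eigenvalue-lifting phenomenon for equitable partitions, which may be more recognisable to readers familiar with that circle of ideas.
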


\begin{proof}
By assumption $A$ generates $\M$, so by Corollary~\ref{ds} $A$ has $d+1$ distinct eigenvalues $\lambda_0>\lambda_1>\cdots>\lambda_d$. Note that Corollary~\ref{wf} yields that $\G$ is a walk-regular graph. By Theorem~\ref{pi}, for every vertex $x\in X$, there exists an $x$-distance-faithful intersection diagram (of an equitable partition $\Pi_x$) with $d+1$ cells and the structure of the intersection diagram does not depend on $x$ (so claims (i) and (ii) hold). For the moment let $B$ denote the $(d+1)\times(d+1)$ quotient matrix of the $x$-distance-faithful intersection diagram. By \cite[Proposition~4.1]{DF} every $\lambda_i$ $(0\le i\le d)$ is an eigenvalue of $B$. Now our proof is by a contradiction. Assume that there exists an $x$-distance-faithful intersection diagram with less than $d+1$ cells. Then quotient matrix $C$ of such intersection diagram has less than $d+1$ distinct eigenvalues, and by \cite[Proposition~4.1]{DF} every of $d+1$ distinct eigenvalues $\lambda_i$ $(0\le i\le d)$ of $\G$ are also eigenvalues of $C$, a contradiction. The claim (iii) follows.
\end{proof}

\begin{corollary}
\label{we}
Let $\M$ denote the Bose--Mesner algebra of a commutative $3$-class association scheme $\XXi=(X,\R)$, $A\in\M$ denote a $01$-matrix, and let $\G=\G(A)$ denote a (directed) graph of diameter $D$ with adjacency matrix $A$. If $\G$ generates $\XXi$ then $D\in\{2,3\}$, $\G$ has the same $x$-distance-faithful intersection diagram around every vertex $x\in X$ and such a diagram has $4$ cells. Moreover, the following hold.
\begin{enumerate}[label=\rm(\roman*)]
\item If $D=3$, then the partition $\{\G_i(x)\}_{0\le i\le 3}$ is equitable, and the corresponding parameters do not depend on the choice of $x\in X$.
\item If $D=2$, then exactly one of the following {\rm(a), (b)} holds.
\begin{enumerate}[label=\rm(\alph*)]
\item Any two adjacent vertices have a constant number of common neighbors, and the number of common neighbors of any two nonadjacent vertices takes precisely two values. Moreover, for any $x\in X$ there exists an equitable partition\break $\Pi_x=\{\{x\},\G_1(x),\P(x),\P'(x)\}$, for which $\G_2(x)=\P(x)\cup\P'(x)$. 
\item Any two nonadjacent vertices have a constant number of common neighbors, and the number of common neighbors of any two adjacent vertices takes precisely two values. Moreover, for any $x\in X$ there exists an equitable partition\break $\Pi_x=\{\{x\},\P(x),\P'(x),\G_2(x)\}\}$, for which $\G_1(x)=\P(x)\cup\P'(x)$.
\end{enumerate}
\end{enumerate}
\end{corollary}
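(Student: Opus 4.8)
Since $\G$ generates $\XXi$ and $d=3$, Corollary~\ref{ds} gives that $A$ has exactly four distinct eigenvalues, and Theorem~\ref{pi} supplies, for every $x\in X$, an $x$-distance-faithful equitable partition $\Pi_x=\{\P_0(x),\ldots,\P_3(x)\}$ with four cells whose structure is independent of $x$; this is already the assertion that $\G$ has the same $x$-distance-faithful intersection diagram, with four cells, around every vertex. My first step is to pin down the diameter. Writing $A^\ell=\sum_j \alpha_{\ell j}A_j$ as in the proof of Lemma~\ref{pg}, for $z\in\P_i(x)$ one has $(A^\ell)_{xz}=\alpha_{\ell i}$, so the distance $\partial_i:=\partial(x,z)$ from $x$ to the cell $\P_i(x)$ equals $\min\{\ell\mid \alpha_{\ell i}\ne 0\}$ and does not depend on $x$. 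As each cell is nonempty (its size is the valency $n_i>0$), every vertex has eccentricity $\max_i\partial_i=D$, and all distance classes $\G_0(x),\ldots,\G_D(x)$ are nonempty. The bound $D\le 3$ follows because a pair at distance $D$ forces $\{A^0,\ldots,A^D\}$ to be linearly independent (cf. \cite[Proposition~5.4]{SPm}), so $D+1\le 4$; and $D\ge 2$ because $D=1$ would force $A=J-I$, which has only two distinct eigenvalues. Hence $D\in\{2,3\}$.

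Next I would do the cell/distance bookkeeping. Since the $A_i$ are mutually disjoint $01$-matrices summing to $J$ and $A$ is a $01$-matrix in their span, necessarily $A=\sum_{i\in\Phi}A_i$ for some $\Phi\subseteq\{1,2,3\}$; because $A^1=A$, the cells at distance $1$ are exactly $\{\P_i(x)\mid i\in\Phi\}$, so there are $|\Phi|$ of them. When $D=3$, the four distances $0,1,2,3$ are all realized by the four cells, which forces a bijection between cells and distance classes; thus $\{\G_i(x)\}_{0\le i\le 3}=\Pi_x$ is equitable with parameters independent of $x$, proving (i). When $D=2$, the cells split as one cell at distance $0$, then $|\Phi|$ cells at distance $1$ and $3-|\Phi|$ cells at distance $2$, with both the distance-$1$ and the distance-$2$ class nonempty; hence $|\Phi|\in\{1,2\}$. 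The case $|\Phi|=1$ gives one distance-$1$ cell $\G_1(x)$ and two distance-$2$ cells $\P(x),\P'(x)$, i.e. the partition of (a), while $|\Phi|=2$ gives (b); these alternatives are mutually exclusive and exhaustive, which is the ``exactly one of (a), (b)'' clause.

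For the common-neighbor statements I would use that $\M$ is closed under transpose and product (Lemma~\ref{cc}), so $A A^\top\in\M$; writing $A A^\top=\sum_h c_h A_h$, the number of common out-neighbors $|\G_1^{\ra}(x)\cap\G_1^{\ra}(y)|=(A A^\top)_{xy}=c_h$ depends only on the relation $R_h$ of the pair $(x,y)$. In case (a), adjacency means $(x,y)\in R_i$ (the unique distance-$1$ relation), so adjacent vertices have the constant count $c_i$, whereas nonadjacent distinct pairs lie in the two distance-$2$ relations $R_j,R_k$ and realize the (at most two) values $c_j,c_k$; case (b) is identical with the roles of the distance-$1$ and distance-$2$ classes interchanged.

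The hard part will be upgrading ``at most two values'' to \emph{precisely} two values. I would argue by contradiction: if the split distance class gave a single value $\mu$ while the unsplit one gave $\lambda$, then with $k$ the valency one gets the strongly-regular-type identity $A A^\top=kI+(\lambda-\mu)A+\mu(J-I)$. Since $A$ is normal (Lemma~\ref{cc}), $A A^\top$ acts as $|\lambda_m|^2$ on each common eigenspace $V_m$, yielding $|\lambda_m|^2=(k-\mu)+(\lambda-\mu)\lambda_m$ for the three non-principal eigenvalues $\lambda_1,\lambda_2,\lambda_3$ of $A$. In the symmetric case these are three distinct reals satisfying one fixed quadratic, which has at most two roots, a contradiction. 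In the non-symmetric case, $p_1(1)\in\CC\setminus\RR$ (Lemma~\ref{gold}) forces the coefficient $\lambda-\mu$ to vanish, so all non-principal eigenvalues would share a single modulus; I would then rule this out using the explicit values of $p_1(1)$ and $p_1(3)$ from Lemma~\ref{gold} together with the diameter-$2$ constraint. This eigenvalue incompatibility is the delicate core of the proof, while the structural steps above are essentially bookkeeping on top of Theorem~\ref{pi} and Corollary~\ref{ds}.
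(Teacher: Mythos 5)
Your structural skeleton coincides with the paper's proof: Corollary~\ref{ds} gives the four distinct eigenvalues, Theorem~\ref{pi} gives the four-cell $x$-distance-faithful equitable partition with $x$-independent structure, $D\le 3$ follows from linear independence of $\{A^0,\ldots,A^D\}$, $D=1$ is excluded because $A=J-I$ has too few eigenvalues, the $D=3$ case identifies $\Pi_x$ with the distance partition, and your decomposition $A=\sum_{i\in\Phi}A_i$ with $|\Phi|\in\{1,2\}$ is exactly the paper's six-case analysis producing (a) and (b). Up to and including ``at most two values'' (via relation-constancy of the entries of $AA^\top\in\M$), your argument is sound.

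The genuine gap is the step you yourself call the delicate core: upgrading ``at most two'' to \emph{precisely} two values in the non-symmetric case. Your reduction shows only that a collapse of the two values forces $\lambda=\mu$ and forces every non-principal eigenvalue of $A$ to have modulus $\sqrt{k-\mu}$. That is not self-contradictory: a spectrum of the form $k$, $a\pm b\sqrt{-1}$, $\pm\sqrt{a^2+b^2}$ has four distinct members, and the identity $AA^\top=(k-\mu)I+\mu J$ you would need to refute is precisely the defining equation of a symmetric-design-like (normally regular) digraph, which is not visibly incompatible with Lemma~\ref{gold} plus ``diameter $2$''. Ending with ``I would then rule this out using the explicit values of $p_1(1)$ and $p_1(3)$'' is a promissory note, not a proof, so the directed half of part (ii) is unproven in your write-up. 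The paper closes this step by a different and uniform mechanism: if the two values coincide, then $\G$ satisfies the strongly-regular relation $A^2=cI+\lambda A+\mu(J-I-A)$, where common neighbors are counted as $(A^2)_{xy}$, i.e.\ vertices $z$ with $x\ra z\ra y$, and $c=(A^2)_{xx}$ is independent of $x$ by walk-regularity (Corollary~\ref{wf}). Then $A^2\in\Span\{I,A,J\}$, and since $AJ=kJ$ (Lemma~\ref{dl}) this span is closed under multiplication by $A$, so $\A=(\langle A\rangle,+,\cdot)$ has dimension at most $3$, contradicting the four distinct eigenvalues. Equivalently, the three pairwise distinct non-principal eigenvalues would all be roots of the single monic quadratic $t^2-(\lambda-\mu)t-(c-\mu)$, which is impossible over $\CC$ whether the roots are real or not, so the directed case is covered at once; this is what the paper means by ``$\G$ would be strongly-regular, a contradiction''. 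Your symmetric-case quadratic argument is exactly this (there $AA^\top=A^2$); running the same argument on $A^2$ instead of $AA^\top$ is what makes the non-symmetric case go through, whereas under your common-out-neighbor reading the question you left open remains open.
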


\begin{proof}
Corollary~\ref{ds} yields that $\G$ has $4$ distinct eigenvalues, and by Corollary~\ref{dm}, $\G$ is a (strongly) connected (directed) graph.

We first show that $D\le 3$. Since $\{A^0,A^1,\ldots,A^D\}$ is a linearly independent set (this is a well-known fact, see for example \cite[Proposition~5.6]{SPm}) and since $\{A^0,A^1,\ldots,A^d\}$ is a basis of $\A$, we have $D\le d$, and consequently $D\le 3$. Next we show that $D=1$ is not possible.
If $D=1$ then every two different vertices are adjacent, which yields that $\G$ is a complete graph. Then, we have that $A=J-I$ is the adjacency matrix, which yields that $A$ has less then $4$ distinct eigenvalues, a contradiction. Case $D=1$ is not possible.

By Theorem~\ref{pi}, the number of cells of a distance-faithful equitable partition is equal to $4$. 

Assume that $D=3$. Pick $x\in X$. The only possibility to get a $x$-distance-faithful equitable partition with $4$ cells is to take distance partition $\{\G_i(x)\}_{0\le i\le 3}$ of $X$. An example of a directed graph with $D=3$ which generates $3$-class association scheme is given in Figure~\ref{2f}.

Assume that $D=2$. For the moment let $\{B_0=I,B_1,B_2,B_3\}$ denote the standard basis of $\M$, and let $A$ denote the adjacency matrix of a graph $\G=\G(A)$. Since $A\in\M$, the matrix $A$ is equal to some linear combination of $\{B_0,B_1,B_2,B_3\}$. Moreover, since $A$ and the $B_i$'s are $01$-matrices, in total six cases are possible $A\in\{B_1,B_2,B_3\}$ or $A\in\{B_1+B_2,B_1+B_3,B_2+B_3\}$. (Case $A=B_1+B_2+B_3$ is not possible since then we would have a complete graph.) First three cases $A\in\{B_1,B_2,B_3\}$ give claim (a). Cases $A\in\{B_1+B_2,B_1+B_3,B_2+B_3\}$ yield claim (b). Note that, if we do not have two different values (in both cases), then $\G$ is a strongly-regular graph, a contradiction (by assumption, $\G$ generates $\XXi$). 

The result follows.
\end{proof}

\section{Algebraic property of $\boldsymbol{\G}$ when $\boldsymbol{\G}$ generates a commutative association scheme}
\label{6G}

In this section we prove Theorem~\ref{PI}. For that purpose we need Proposition~\ref{Ha}.

\begin{proposition}
\label{Ha}
Let $\G=\G(A)$ denote a directed graph with vertex set $X$ and adjacency matrix $A$. Assume that $A$ generates the Bose--Mesner algebra $\M$ of a commutative $d$-class association scheme, and let $\{B_0,B_1,\ldots,B_d\}$ denote the standard basis of $\M$. Then, the following hold.
\begin{enumerate}[label=\rm(\roman*)]
\item For any $i$ $(0\le i\le d)$ and $y,z,u,v\in X$, if $(B_i)_{zy}=(B_i)_{uv}=1$ then $\partial(z,y)=\partial(u,v)$.
\item Every distance-$i$ matrix $A_i$ of $\G=\G(A)$ belongs to $\M$, i.e., $A_i\in\M$ $(0\le i\le D)$.
\end{enumerate}
\end{proposition}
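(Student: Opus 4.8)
The plan is to reduce both parts to a single observation: since $A$ generates $\M$, every power $A^\ell$ lies in $\M$ and hence expands in the standard basis, which forces the number of walks of a given length between two vertices to depend only on the standard-basis cell that contains the corresponding ordered pair.

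First I would record the consequences of the hypothesis that $A$ generates $\M$. By Corollary~\ref{ds}, $A$ has exactly $d+1$ distinct eigenvalues, and by Corollary~\ref{dm} the graph $\G=\G(A)$ is (strongly) connected, so that $\partial(z,y)$ is finite for every ordered pair $(z,y)\in X\times X$. Moreover, since $A\in\M$ and $\M$ is an algebra, $A^\ell\in\M$ for every $\ell\ge 0$; expanding in the standard basis gives scalars $c_{\ell j}$ with $A^\ell=\sum_{j=0}^d c_{\ell j}B_j$. Here I would use only that the $B_j$ are mutually disjoint $01$-matrices whose sum is $J$, which is part of the definition of a standard basis, so that the cells $\{(z,y)\mid (B_j)_{zy}=1\}$ partition $X\times X$.

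For part (i), fix $i$ and a pair $(z,y)$ with $(B_i)_{zy}=1$. Then $(B_j)_{zy}=0$ for $j\ne i$, whence $(A^\ell)_{zy}=c_{\ell i}$; the crucial point is that this value depends only on the index $i$ and on $\ell$, not on the particular pair. By Lemma~\ref{hB}, $(A^\ell)_{zy}$ counts the walks of length $\ell$ from $z$ to $y$, and since a minimal-length walk cannot repeat a vertex it is a directed path, so that $\partial(z,y)=\min\{\ell\ge 0 \mid (A^\ell)_{zy}>0\}=\min\{\ell\ge 0 \mid c_{\ell i}>0\}$. The right-hand side depends only on $i$, so if $(B_i)_{zy}=(B_i)_{uv}=1$ then $\partial(z,y)=\partial(u,v)$, which is exactly (i). For part (ii), claim (i) lets me attach to each $j$ a well-defined distance $d_j:=\partial(z,y)$ for any pair with $(B_j)_{zy}=1$. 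Since each ordered pair lies in exactly one cell $B_j$, we have $\partial(z,y)=i$ precisely when $d_j=i$ for the cell containing $(z,y)$; hence the distance-$i$ matrix satisfies $A_i=\sum_{j:\,d_j=i}B_j$, a sum of elements of $\M$, so $A_i\in\M$ for every $0\le i\le D$.

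The only genuinely delicate point is the identity $\partial(z,y)=\min\{\ell \mid (A^\ell)_{zy}>0\}$ in the directed setting, which I would justify by observing that a walk of minimal length has no repeated vertices and is therefore a directed path, while strong connectivity guarantees that at least one such walk exists. Everything else is bookkeeping with the standard-basis expansion, so I expect no serious obstacle beyond making this walk/distance correspondence precise.
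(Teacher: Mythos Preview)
Your proposal is correct and follows essentially the same approach as the paper: expand $A^\ell$ in the standard basis to see that $(A^\ell)_{zy}$ depends only on the cell index $i$, then infer that $\partial(z,y)$ depends only on $i$, and finally conclude that each $A_i$ is a sum of $B_j$'s. The only cosmetic differences are that the paper obtains strong connectivity from $J\in\M=\langle A\rangle$ via Corollary~\ref{dn} rather than via Corollaries~\ref{ds} and~\ref{dm}, and phrases the distance argument in (i) as a contradiction (assume $\partial(z,y)>\partial(u,v)=m$, then $(A^m)_{uv}\ne 0$ but $(A^m)_{zy}=0$) rather than via your $\min$ formulation.
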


\begin{proof}
Since $A$ generate the Bose--Mesner algebra $\M$, and $J\in\M$, there exists a polynomial $p(t)$ such that $J=p(A)$. This implies that $\G$ is regular and strongly connected (see Corollary~\ref{dn}).

(i) For every $\ell\in\NN$, there exists complex scalars $\alpha^{(\ell)}_i$ $(0\le i\le d)$ such that $A^\ell=\sum_{i=0}^{d} \alpha^{(\ell)}_i B_i$. Recall that $\sum_{i=0}^d B_i = J$ and $B_i\circ B_j=\delta_{ij}B_i$ $(0\le i,j\le d)$. This yields that for any $y,z,u,v\in X$ and $i$ $(0\le i\le d)$, if $(B_i)_{zy}\ne 0$ and $(B_i)_{uv}\ne 0$ then $(A^\ell)_{zy}=(A^\ell)_{uv}=\alpha^{(\ell)}_i$, i.e., the number of walks of length $\ell$ from $z$ to $y$ is equal to the the number of walks of length $\ell$ from $u$ to $v$ (see Lemma~\ref{hB}). 
Moreover, $(A^\ell)_{zy}=(A^\ell)_{uv}$ holds for any $\ell$ $(\ell\in\NN)$. To prove the claim, we use the proof by a contradiction, similar as in \cite[Lemma~2.3]{FQpG} where the author has an undirected graph. Assume that $\partial(z,y)>\partial(u,v)=m$. Then, $(A^m)_{uv}\ne 0$ and $(A^m)_{zy}=0$, a contradiction. 
The result follows. 

(ii) From the proof of (i) above it follows that, if $y,z\in X$ are two arbitrary vertices such that $\partial(z,y)=i$, then there exists $B_j$ (for some $0\le j\le d$) such that $(B_j)_{zy}=1$. Recall also that $(A_i)_{zy}=1$. In fact, for such a choice of $j$ and any nonzero $(u,v)$-entry of $B_j$, we have $\partial(u,v)=i$. This yields
$$
A_i=\sum_{j:A_i\circ B_j\ne{\boldsymbol{O}}} B_j
\qquad(0\le i\le D).
$$
The result follows.
\end{proof}

\subsection{Proof of Theorem~\ref{PI}}

We show that (i)$\Lra$(ii), (iii)$\Ra$(i) and (ii)$\Ra$(iii). Recall that
\begin{equation}
\label{Pj}
\Delta=\{(i,j) \mid i=\partial(x,y),\, j=\partial(y,x),\, x,y\in X \}.
\end{equation} 

\bigskip
(i)$\Ra$(ii). Assume that $\A$ is the Bose--Mesner algebra of a $d$-class association scheme $\XXi=(X,\R)$, and let $\{B_i\}_{i=0}^d$ denote adjacency matrices of $\XXi$. Note that $\A=\Span\{A^0,A^1,\ldots,A^d\}=\Span\{B_0,B_1,\ldots,B_d\}$. Assume that $\G$ has diameter $D$, and let $A_i$'s denote the distances-$i$ matrices of $\G$. For a given $x\in X$ we define a partition $\Pi_x=\{\P_0(x),\P_1(x),\ldots,\P_d(x)\}$ of $X$ in the following way 
$$
\P_i(x)=\{z\mid  (B_i)_{xz}=1\}\qquad (0\le i\le d).
$$
From the proof of Theorem~\ref{pi}, the partition $\Pi_x$ is equitable and the corresponding parameters do not depend of the choice of $x$.

Pick $i$ $(0\le i\le d)$ and $z\in\P_i(x)$; hence $(B_i)_{xz}=1$. For the moment, assume that $\partial(x,z)=h$. Note that $A_h\circ B_i=B_i$ (see Proposition~\ref{Ha}). Since $B_i^\top\in\{B_0,\ldots,B_d\}$, there exists $k$ such that $B_k=B_i^\top$. 
Note that $A_i=\sum_{h\in\Phi_i} B_h$ for some index set $\Phi_i$ $(0\le i\le D)$. 
Thus, $A_r\circ B_k=B_k$ for some index $r$ $(0\le r\le D)$. From the definition of equitable partition and the fact the structure of intersection diagram is the same around every vertex, it follows that for any $z\in\P_i(x)$, $\partial(z,x)=r$. Now, we can conclude that \eqref{Pj} can be written as 
$$
\Delta=\{(h,r) \mid (B_i)_{xy}=1, \partial(x,y)=h,\, (B_i^\top)_{yx}=1,\,\partial(y,x)=r,\, x,y\in X,\, 0\le i\le d \}.
$$
Note that there is 1-to-1 correspondence between $\Delta$ and $\{0,1,\ldots,d\}$. For any $i$ $(0\le i\le d)$ we can produce element $(h,r)\in\Delta$, and for any element $(h,r)\in\Delta$ we can produce element $i$ $(0\le i\le d)$, i.e., it is enough to have index $i$ $(0\le i\le d)$ to derive an element $(h,r)$ of the set $\Delta$, and wise versa (with it $|\Delta|=d+1$). We also have
$$
\{R_i\}_{0\le i\le d} = \{R_{\hh}\}_{\hh\in\Delta}
$$
where
\begin{align*}
R_i&=\{(x,y)\in X\times X \mid (B_i)_{xy}=1 \}\qquad(0\le i\le d),\\
R_{\jj}&=\{(x,y)\in X\times X\mid (\partial(x,y),\partial(y,x))=\jj \}\qquad (\jj\in\Delta),
\end{align*}
i.e., for every $i$ $(0\le i\le d)$ there exists $\hh\in\Delta$ such that $R_i=R_{\hh}$, and vice versa.

Let $\oo=(0,0)$. Then, (AS1') $R_{\oo}=\{(x,x)\mid x\in X\}$; and (AS2') $\{R_{\ii}\}_{\ii\in\Delta}$ is a partition of the Cartesian product $X\times X$. Furthermore (AS3') $R^\top_{\jj}=\{(y,x)\mid (x,y)\in R_{\jj}\}$ is in $\{R_{\ii}\}_{\ii\in\Delta}$; as well as (AS4') for each triple $\ii,\jj,\hh$ $(\ii,\jj,\hh\in\Delta)$, and $(x,y)\in R_{\hh}$, the scalar
$$
|\{z\in X \mid (x,z)\in R_{\ii} \mbox{ and } (z,y)\in R_{\jj} \}|
$$
does not depend on the choice of the pair $(x,y)\in R_{\hh}$. Namely,
$$
(B_iB_j)_{xy}=|\{z\in X \mid (x,z)\in R_{\ii} \mbox{ and } (z,y)\in R_{\jj} \}|.
$$
Since $A$ generates $\M$, $(X,\{R_{\ii}\}_{\ii\in\Delta})$ is a commutative. The result follows.  


\bigskip
(ii)$\Ra$(i). Assume that $\XXi'=(X,\{R_{\ii}\}_{\ii\in\Delta})$ is a commutative $d$-class association scheme, and let $\M'$ denote the corresponding Bose--Mesner algebra. Define the set of adjacency matrices of $\XXi$ in the following way
$$
(B_{\ii})_{xy}=\left\{\begin{array}{ll}
1 & \hbox{if } \; (x,y)\in R_{\ii}, \\
0 & \hbox{if } \; (x,y)\notin R_{\ii}
\end{array}\right.\qquad(\ii\in\Delta).
$$
Note that $\{B_{\ii}\}_{\ii\in\Delta}$ is a basis of the Bose--Mesner algebra $\M'$ of $\XXi'$. For the moment let $\Phi$ denote subset of $\Delta$ with first coordinate equal to $1$, i.e., let
$$
\Phi_1=\{(1,\partial(y,x))\mid \partial(x,y)=1,\, x,y\in X \}.
$$
Note that $A\in\M'$, since $A=\sum_{\hh\in\Phi_1} B_{\hh}$. Similarly, if $\Phi_i=\{(i,\partial(y,x))\mid \partial(x,y)=i,\, x,y\in X \}$, it is not hard to see that every distance-$i$ matrix $A_i$ $(0\le i\le D)$ belong to $\M$, i.e., $A_i=\sum_{\hh\in\Phi_i} B_{\hh}$.

Since $\XXi'$ is a commutative $d$-class association scheme, $|\Delta|=d+1$, and we can enumerate the elements of $\Delta$ as $\Delta=\{\boldsymbol{0},\boldsymbol{1},\ldots,\boldsymbol{d}\}$ (where $\boldsymbol{0}=(0,0)$). The fact that $A\in\M$ yields that there exist complex scalars $w_{ij}$ $(0\le i,j\le d)$ such that
\begin{eqnarray}
 I & = & w_{00} B_{\oo} + w_{01} B_{\boldsymbol{1}} + \cdots + w_{0d} B_{\boldsymbol{d}},\nonumber\\
 A & = & w_{10} B_{\oo} + w_{11} B_{\boldsymbol{1}} + \cdots + w_{1d} B_{\boldsymbol{d}},\nonumber\\
 A^2 & = & w_{20} B_{\oo} + w_{21} B_{\boldsymbol{1}} + \cdots + w_{2d} B_{\boldsymbol{d}},\label{pm}\\
\,& \, & \vdots\nonumber\\
A^d & = & w_{d0} B_{\oo} + w_{d1} B_{\boldsymbol{1}} + \cdots + w_{dd} B_{\boldsymbol{d}}\nonumber,
\end{eqnarray}
i.e.
\begin{equation}
\label{pt}
\left [
\begin{matrix} 
I\\ A\\ A^2\\ \vdots\\ A^d
\end{matrix}
\right]=
{\underbrace{
\left[
\begin{matrix} 
w_{00} & w_{01} & ... & w_{0d}\\
w_{10} & w_{11} & ... & w_{1d}\\
w_{20} & w_{21} & ... & w_{2d}\\
\vdots & \vdots & \, & \vdots \\
w_{d0} & w_{d1} & ... & w_{dd}\\
\end{matrix}
\right]}_{=B}}
\left[
\begin{matrix} 
B_{\boldsymbol{0}}\\ B_{\boldsymbol{1}}\\ B_{\boldsymbol{2}}\\ \vdots\\ B_{\boldsymbol{d}}\end{matrix}
\right].
\end{equation}
By \eqref{pt}, $\A\subseteq\M'$. Since $B$ from \eqref{pt} is invertible (it is a change of basis matrix), we also have $\M'\subseteq\A$. The result follows.

\bigskip
(iii)$\Ra$(i). Assume that $A$ is a normal matrix, $|\Delta|=d+1$ and the number of walks from $x$ to $y$ of every given length $\ell\ge 0$ only depends on the distances $\partial(x,y)$ and $\partial(y,x)$ (and do not depend on choice of the pair $(x,y)$). For any $y,z\in X$, define a column vector $\ww(y,z)\in\CC^{d+1}$ in the following way
$$
\ww(y,z):=\Big( (A^0)_{yz}, (A^1)_{yz},\ldots,(A^d)_{yz} \Big)^{\top}.
$$
By our assumption, for any $\hh\in\Delta$ and $x,y,u,v\in X$ such that $(\partial(x,y),\partial(y,x))=(\partial(u,v),\partial(v,u))=\hh$, we have
$$
\ww(u,v)=\ww(x,y).
$$
Define the matrices $B_{\ii}$ $(i\in\Delta)$ in the following way
$$
(B_{\ii})_{xy}=\left\{\begin{array}{ll}
1 & \hbox{if } \; (\partial(x,y),\partial(y,x))=\ii, \\
0 & \hbox{otherwise }
\end{array}\right.\qquad(\ii\in\Delta).
$$
Thus, if $(B_{\ii})_{xy}=(B_{\ii})_{uv}=1$ then $\ww(x,y)=\ww(u,v)$. If $(B_{\ii})_{uv}=1$, we can write $\ww(u,v)=(w_{0i},w_{1i},\ldots,w_{di})^\top$. Let $\boldsymbol{0}=(0,0)$. By the bove comments we have that system of linear equations \eqref{pm} holds, i.e., we have
\begin{equation}
\label{pU}
\left[
\begin{matrix} 
I\\ A\\ A^2\\ \vdots\\ A^d
\end{matrix}
\right]=
\underbrace{\left[
\begin{matrix} 
w_{00} & w_{01} & ... & w_{0d}\\
w_{10} & w_{11} & ... & w_{1d}\\
w_{20} & w_{21} & ... & w_{2d}\\
\vdots & \vdots & \, & \vdots \\
w_{d0} & w_{d1} & ... & w_{dd}\\
\end{matrix}
\right]}_{=W}
\left[
\begin{matrix} 
B_{\boldsymbol{0}}\\ B_{\boldsymbol{1}}\\ B_{\boldsymbol{2}}\\ \vdots\\ B_{\boldsymbol{d}}
\end{matrix}
\right]
\end{equation}
for some complex scalars $w_{ij}$ $(0\le i,j\le d)$. Since $A$ is a normal matrix with $d+1$ distinct eigenvalues, $\{A^0,A^1,\ldots,A^d\}$ is a linearly independent set. On the other hand, $\{B_{\boldsymbol{0}},B_{\boldsymbol{1}},\ldots,B_{\boldsymbol{d}}\}$ is also a linearly independent by definition. 
For the moment let $\B=\Span\{B_{\boldsymbol{0}},B_{\boldsymbol{1}},\ldots,B_{\boldsymbol{d}}\}$ denote an algebra with respect to the elementwise--Hadamard $\circ$-product. Note that \eqref{pU} yields that $\A\subseteq\B$. On the other hand, since the matrix $W$ from \eqref{pU} is invertible (it is a change of basis matrix), we also have $\B\subseteq\A$. This yields that $\B=\A$. Now we have (AS1) $B_0=I$, the identity matrix; as well as (AS2) $\sum_{i=0}^d B_i=J$. Since $\A=\B$, every $B_i$ can be written as a polynomial in $A$, i.e., there exists some polynomial $p_i(t)\in\CC[t]$ of degree less or equal $d$ sucha that $B_i=p_i(A)$. This yields that (AS5) $B_iB_j=B_jB_i$ $(0\le i,j\le d)$. The assumption that $A$ is normal matrix, yields $A^\top\in\A$ (see, for example, \cite[Theorem~1.1]{CFGM}), so we have (AS3) $B_i^\top\in\{B_0,\ldots,B_d\}$ (recall, every $B_i$ can be written as a polynomial in $A$); and since $\A=\B$, (AS4) $B_iB_j$ is a linear combination of $B_0,B_1,\ldots,B_d$ for any $i,j$ $(0\le i,j\le d)$. The result follows.

\bigskip
(ii)$\Ra$(iii). It follows from the part (ii)$\Ra$(i) of the proof, in particular from \eqref{pm}.

\section{Further directions}
\label{ta}

The following two questions naturally arises from Section~\ref{ra}.

\begin{researchProblem}
\label{Tz}
Let $\XXi$ denote a $2$-class association scheme with vertex set $X$ and relations $\{R_0,R_1,R_2\}$. Under which algebraic-combinatorial restrictions on $\XXi$, can we split the relation $R_1$ (or $R_2$) into two (nonsymmetric) relations $R'$, $R''$ such that $(X,\{R_0,R',R'',R_2\})$ is a $3$-class association scheme.
\end{researchProblem}

Furthermore, in the language of graph theory, we can ask for something more from Research problem \ref{Tz}:

\begin{researchProblem}
Under which algebraic-combinatorial restrictions, can we `split' a strongly-regular graph $\G=\G(A)$ into two (directed) graphs $\G'=\G(A_1)$ and $\G''=\G(A_2)$, so that $A=A_1+A_2$ and $A_1$ (or $A_2$) generates a commutative $3$-class association scheme.
\end{researchProblem}

Recall that a graph which has the same intersection diagram of an equitable partition around every vertex is {\em walk-regular} (the number of closed walks of length $\ell$ rooted at vertex $x$, that is $(A^\ell)_{xx}$, only depends on $\ell$, for each $\ell\ge 0$). If we have an equitable partiton of an undirected walk-regular graph, we can make an intersection diagram, and using this intersection diagram we can compute quotient matrix $B$. Then, from \cite[Proposition~4.1]{DF} it follows that every eigenvalue of $\G$ is also an eigenvalue of $B$. The problem that we are struggling with (it can be a possible further direction in the topic of this paper) is proving or disproving the claim given in Research problem~\ref{pe}.

\begin{researchProblem}
\label{pe}
Let $\M$ denote the Bose--Mesner algebra of a commutative $d$-class association scheme $\XXi=(X,\R)$, $A\in\M$ denote a $01$-matrix and let $\G=\G(A)$ denote a (directed) graph with the adjacency matrix $A$. If the following {\rm (i)--(iii)} hold
\begin{enumerate}[label=\rm(\roman*)]
\item for every vertex $x\in X$, there exists an $x$-distance-faithful intersection diagram of an equitable partition $\Pi_x$ with $d+1$ cells;
\item the structure of the $x$-distance-faithful intersection diagram of the equitable partition $\Pi_x$ from {\rm (i)} does not depend on $x$;
\item the graph $\G$ does not have an $x$-distance-faithful intersection diagram with less than $d+1$ cells (i.e., $d+1$ is the smallest number of cells for which there exists an $x$-distance-faithful equitable partition);
\end{enumerate}
prove or disprove that then $\G$ generates $\M$.
\end{researchProblem}

Let $r$ denote some natural number and for the moment consider properties (i) and (ii) of the given research problem above. It is not hard (but also not so easy) to find a graph which has the same intersection diagram (of an equitable partition) with $r+1$ cells around every vertex, but which has less than $r+1$ distinct eigenvalues. In the spirit of this paper, this intuitively yields that such a graph will not generate a commutative association scheme. One such example is so-called `chordal ring' $(12, 4)$ of prism shown in \cite[Figure~3]{FQpG}. The spectrum of this graph is $\{[3]^1,[2]^2,[1]^1,[0]^4,[-1]^1,[-2]^2,[-3]^1\}$, and the spectrum of its $x$-distance-faithful intersection diagram is $\{[3]^1,[2]^1,[1]^1,[0]^2,[-1]^1,[-2]^1,[-3]^1\}$ (has $8$ cells and $7$ distinct eigenvalues).

A solution to the above research problem will give a combinatorial property of a (directed) graph that generates a commutative association scheme; in this way we might classify commutative association schemes with respect to the above property.

\section*{Acknowledgments}

This work is supported in part by the Slovenian Research Agency (research program P1-0285).

\section*{Declaration of competing interest}

The authors declare that they have no known competing financial interests or personal relationships that could have appeared to influence the work reported in this paper.


{\small
\bibliographystyle{references}
\bibliography{associationSch_v2}
}

\end{document}